\documentclass[
12pt]
{amsart}

\usepackage{hyperref}
\usepackage{cite}

\usepackage[a4paper, margin=2.5cm]{geometry}
\usepackage{graphicx}
\usepackage{xcolor}
\usepackage[toc,page]{appendix}
\usepackage[normalem]{ulem}
\usepackage{comment}

\usepackage{esint}
\usepackage{amsmath}   
\usepackage{amssymb}   
\usepackage{amsthm}    
\usepackage{amsfonts}  
\usepackage{bbm}
\usepackage{mathrsfs}  
\usepackage{esint}
\usepackage{lipsum}
\usepackage{mathtools}

\usepackage[shortlabels, inline]{enumitem}

\theoremstyle{plain}
\newtheorem{theorem}{Theorem}[section]
\newtheorem{corollary}[theorem]{Corollary}
\newtheorem{lemma}[theorem]{Lemma}
\newtheorem{proposition}[theorem]{Proposition}
\newtheorem{definition}[theorem]{Definition}

\theoremstyle{remark}
\newtheorem{remark}[theorem]{Remark}
\newtheorem{example}[theorem]{Example}

\newtheorem{claim}[theorem]{Claim}

\theoremstyle{definition}
\newtheorem*{acknowledgement}{Acknowledgement}

\DeclarePairedDelimiterX{\hsp}[2]{\lparen}{\rparen}{#1 \mid #2}
\DeclarePairedDelimiterX{\pairing}[2]{\langle}{\rangle}{#1 \mid #2}

\newcommand{\wick}[1]{\mathopen{:}#1\mathclose{:}}

\newcommand{\set}[1]{\left\{#1\right\}}							

\newcommand{\seq}[1]{\left(#1\right)}								

\newcommand{\tparen}[1]{\big({#1}\big)}							
\newcommand{\paren}[1]{\left(#1\right)}							

\newcommand{\tbraket}[1]{\big[#1\big]}							

\newcommand{\scalar}[2]{\left\langle #1 \,\middle |\, #2\right\rangle}

\newcommand{\comma}{\,\,\mathrm{,}\;\,}

\newcommand{\fstop}{\,\,\mathrm{.}}

\newcommand{\emparg}{{\,\cdot\,}}								
\renewcommand{\paragraph}[1]{\smallskip\noindent\emph{#1.}\;\,}

\def\N{{\mathbb N}}    
\def\R{{\mathbb R}}
\def\Q{{\mathbb Q}}
\def\E{{\mathbb E}}
\def\P{{\mathbb P}}

\def\M{{M}}
\def\g{{g}}
\newcommand{\vol}{\mathsf{vol}}

\DeclareMathOperator{\Ric}{\mathrm{Ric}}

\def\C{\mathcal{C}}

\def\eps{{\varepsilon}}

\newcommand{\masha}[1]
{{\color{red} Masha says: #1}}

\newcommand{\eva}[1]
{{\color{blue} Eva says: #1}}

\begin{document}

	\title[Semiclassical limit]{
		SEMICLASSICAL LIMIT OF POLYAKOV-LIOUVILLE MEASURE 
		and Q-Curvature Uniformization
		ON
		EVEN-DIMENSIONAL MANIFOLDS
	}
	
	\author[Gordina]{Maria Gordina}
	\address{Department of Mathematics\\
		University of Rochester\\
		Rochester, NY 14627,  U.S.A.}
	\email{maria.gordina@rochester.edu}

	\author[Kopfer]{Eva Kopfer}
	\address{ Institute for Applied Mathematics \\
		University of Bonn\\
		Germany 
	}
	\email{eva.kopfer@iam.uni-bonn.de}
	
	\author[Sturm]{Karl-Theodor Sturm}
	\address{ 
		Institute for Applied Mathematics\\
		University of Bonn\\
		Germany 
	}
	\email{sturm@uni-bonn.de}
	
	\keywords{{Polyakov--Liouville measure, semiclassical limit,
			Q-curvature, conformal geometry, GJMS operators,
			elliptic equations on manifolds, concentration phenomena}. }
	
	\subjclass{{Primary 53A30; Secondary 58J05, 35J60, 60B10}}

	\date{}

	\begin{abstract}
		We study the semiclassical limit of the Polyakov-Liouville measure $\boldsymbol{\nu}_\gamma$, which is a non-Gaussian measure on $H^{-\eps}(M)$ that has recently been extended from Riemann surfaces to general Riemannian manifolds $(M,g)$ of even dimension. We show that under an appropriate rescaling in the semiclassical limit as $\gamma\to0$, the normalized Polyakov-Liouville measure $\Q_\gamma$ concentrates on the unique smooth weight $u$ for which the conformal metric $e^{2u}g$ on $M$ has constant $Q$-curvature.
	\end{abstract}

	\maketitle
	
	\tableofcontents

	\section{Introduction} 

Over the past two decades, remarkable progress has been achieved in the probabilistic approach to conformal field theory, most notably through the work of David, Kupiainen, Rhodes and Vargas in \cite{DKRV16}. A central object in this theory is the Polyakov–Liouville measure $\boldsymbol{\nu}_\gamma(h)$, a non-Gaussian probability measure on the space of distribution-valued fields
$H^{-\eps}(M)$, where $(M,g)$ is a Riemann surface.

In \cite{lacoin2017semiclassical} the authors study the semiclassical limit of Liouville field theory, 
namely the regime in which $\gamma\to 0$ while $\Lambda:=m\gamma^2$ remains fixed so that $m\to\infty$. They show that for flat reference metrics $g$ the rescaled fields $\frac\gamma2 h$ converge in law to the solution of the classical Liouville equation. They further  determine the leading order fluctuations around this limiting hyperbolic geometry, proving that they are  given by a massive Gaussian free field with mass proportional to $\Lambda$, which coincides with the absolute value of the constant negative curvature.

The main goal of this paper is to extend the semiclassical limit of Liouville field theory to higher even-dimensional manifolds. In particular, we show that in the limit $\gamma\to 0$ the Polyakov-Liouville measure ${\boldsymbol{\nu}}_{\gamma}$ concentrates around conformal metrics of constant $Q$-curvature, generalizing the corresponding two-dimensional result in \cite{lacoin2017semiclassical}.
That is, we study asymptotic behavior of ${\boldsymbol{\nu}}_{\gamma}$ in the limit $\gamma\to0$ in the semiclassical regime.

Recently, the authors in \cite{Del24} proposed  an ansatz for a conformal field theory on compact manifolds of arbitrary even dimension. The approach employs Branson's $Q$-curvature and the Graham--Jenne--Mason--Sparling (GJMS) operator of maximal order.
On an even $n$-dimensional manifold $M$ the GJMS operator of maximal order 
\begin{equation*}
	\mathsf p=a_n(-\Delta)^{n/2} + \text{low order terms},
\end{equation*}
where $a_n=\frac{2}{(4\pi)^{n/2}\Gamma(n/2)}$,
has been first defined in \cite{GJMS92} and plays the role of a conformally covariant power of the Laplacian.

Let $g$ be a Riemannian metric on $M$ and let $\hat g=e^{2u}g$ be the conformally related metric for some smooth function $u\colon M\to\R$. We denote by $\mathsf p_g,\mathsf p_{\hat g}$  the corresponding GJMS operators,  and by and $Q_g,Q_{\hat g}$ the  $Q$-curvatures  associated with the metrics $g$ and $\hat g$.   It was shown in \cite{Bra85} that these quantities satisfy the following conformal transformation rule

\begin{align}\label{trafo-Q}
	e^{nu}Q_{\hat g}=Q_g+\frac1{a_n}\mathsf{p}_gu.
\end{align}
Similarly to  the uniformization theorem for compact surfaces, one may ask for a conformal change $\hat g=e^{2 u}g$ so that the new metric has constant $Q$-curvature $-\Lambda$ for some $\Lambda \in \R$, i.e.
\[
Q_{e^{2u}g}=-\Lambda.
\]
By the transformation rule for $\mathsf p$ and $Q$ this problem is equivalent to finding a solution of
\begin{align}\label{eq: liouville0}
	\frac1{a_n}	\mathsf pu+Q+\Lambda e^{nu}=0,
\end{align}
where $\Lambda\in\R$. Integrating equation \eqref{eq: liouville0} over $M$ yields a necessary condition for the existence of a solution, namely that the total $Q$-curvature $Q(M)=\int Q\, d\vol$ satisfies
\begin{align*}
	Q(M)=-\Lambda \int e^{nu}\, d\vol.
\end{align*}

It was shown in \cite{Ndiaye} that if the GJMS operator $\mathsf p$ has one-dimensional kernel and the total $Q$-curvature satisfies
\begin{align*}
	Q(M)\neq k(n-1)!\omega_n,\quad k=1,2,\ldots,
\end{align*}
then there exists a smooth function $u\in C^\infty(M)$ such that the conformal metric $\hat g=e^{2u}g$ has constant $Q$-curvature $-\Lambda$. 

Based on this, we show in Corollary \ref{cor: liouville}, that if the total $Q$-curvature is non-positive, then 	for every $\kappa\in\R$ there exists a unique smooth function $u=u_\kappa\in C^\infty(M)$ such that 
\begin{align}\label{const-Q}
	e^{2u}g \text{ has constant $Q$-curvature and }
	\frac1{|M|}\int_M u\,d\vol=\kappa.
\end{align}
It follows that $u_\kappa=u_0+\kappa$. Then the $Q$-curvature of $(M, e^{2u_\kappa}g)$ is $-\Lambda_\kappa=-e^{-n\kappa}\Lambda_0$, where  $\Lambda_\kappa:=-Q(M)\,\left(\int_M e^{nu_\kappa}d\vol\right)^{-1}$.


Recall that the Liouville field theory is a probabilistic model that 
studies the \emph{Polyakov--Liouville measure} formally given by 
\begin{equation*}
	\exp(-S(h)) dh
\end{equation*}
with (non-existing) Lebesgue measure $dh$ on the set of fields 
and action functional
\begin{equation*}
	S(h) =   \int_M\Big( \frac12h\mathsf ph +\Theta\,Q h + {m} e^{\gamma h}\Big) d \vol.
\end{equation*}
Here,
$m,\Theta,\gamma$ are parameters,
where  $\gamma>0$ is the coupling constant of the exponential interaction, $\Theta\in\R$ is the coupling to the background $Q$-curvature and $m>0$ is the cosmological constant controlling the total mass.

In two dimensions, this theory plays an important role in bosonic string theory and in Liouville quantum gravity, which describes random conformal metrics, see \cite{Pol81a}.

From a mathematical perspective, the model is challenging because the field $h$ is not a function but a distribution, so that, in particular, $e^{\gamma h}$ is not well-defined. This difficulty is resolved using the theory of Gaussian multiplicative chaos introduced in \cite{Kah85}, leading to the \emph{Liouville quantum gravity measure} $\mu^{\gamma h}$. See e.g. \cite{DS11, RhoVar14, Sha16, Ber17} for the construction of the Liouville measure and \cite{DRV16,DKRV16,GRV19} for the application in Liouville quantum gravity. 

A rigorous construction of Liouville quantum gravity in higher even dimensions was first proposed in the physics literature \cite{LevyOz}, and then mathematically rigorous constructions have been developed for spheres in \cite{Cercle} and for admissible manifolds in \cite{Del24}.

In this paper, we follow the approach in \cite{Del24} and assume that the manifold $M$ is \emph{admissible}, meaning it is closed, even-dimensional and 
\begin{align*}
	\mathsf p>0 \text{ on } \mathring H,
\end{align*}
where $\mathring H$ denotes the \emph{grounded} $L^2$-space, i.e.
\begin{align*}
	\mathring H=\{\phi\in L^2(M): \langle \phi\rangle=0\}, \quad \langle \phi\rangle=\frac1{|M|}\int_M  \phi(x)\, d\vol(x).
\end{align*}
As shown in \cite{Del24}, under this assumption there exists a pseudo-inverse $\mathsf k_0$ whose  kernel $k_{0}(x,y)$,
satisfies the logarithmic divergence
\begin{equation*}
	\Big|k_{0}(x,y)-\log\frac1{d(x,y)}\Big|\le C\,.
\end{equation*}
The \emph{co-polyharmonic Gaussian field} $h_0$ defined in \cite{Del24} is the centered Gaussian field with covariance kernel
\[
\E\big[h_0(x)\,h_0(y)\big]=k_{0}(x,y).
\]
This field generalizes the classical Gaussian free field to higher even dimensions.
As in two dimensions, $h_0$ is not a classical functions on $M$ but rather a random distribution in $\mathfrak{D}'$, the dual space of~$\mathfrak{D}=\C^\infty(M)$. We write $\scalar{\cdot}{\cdot}$ for the dual pairing between these spaces.

We consider the \emph{augmented field} $h$, which is the centered Gaussian field with 
covariance kernel
\[
\E\big[h(x)\,h(y)\big]=k(x,y), \quad k(x,y):=k_{0}(x,y)+\sigma^2.
\]

Formally, the law $\P$ of $h$ is given by
\begin{align*}
	d\P(h)\propto\exp\left(-\frac12\left( \scalar{h}{\mathsf p h} + \frac1{\sigma^2}\langle h\rangle^2\right)\right)dh.
\end{align*}
Equivalently, since $\mathsf{p}$ is self-adjoint and annihilates constants, the augmentation amounts to adding an independent $\mathcal N(0,\sigma^2)$-distributed random variable to the co-polyharmonic field $h_0$.

The covariance function $k(x,y)$ retains the same logarithmic singularity as $k_0(x,y)$, but the additive constant is chosen so that $k(x,y)\geq0$ pointwise. More precisely, we will choose $\sigma\in\R$ such that the  integral kernel for  $\sqrt{{\sf k_0}}$ satisfies
$g_0(x,y)\ge-{|M|}^{-1/2}\sigma$, cf.~Lemma \ref{est-sqrt-k}.
%
The square root of $\mathsf k_0$ exists since the latter is self-adjoint and positive definite. The positivity of the kernel of $\sqrt{\sf k}:=\sqrt{\sf k_0}+{|M|}^{-1/2}\sigma$ entails positivity of the kernel $k(x,y):=k_0(x,y)+\sigma^2$,
and it yields a more regular cut-off approximation, which we now briefly explain.

Formally, the field $h$ can be represented as
\begin{align*}
	h(x)=\sqrt{\mathsf k}W(x),
\end{align*}
where $W$ denotes white noise on $M$. For ${\ell}\in\N$, the \emph{white-noise truncation} $h_{\ell}$ of $h$ is defined by removing the singularity near $x$ :
\begin{align*}
	h_{\ell}(x)
	=\sqrt{\mathsf k}(1_{M\setminus B_{1/l(x)}}W)(x).
\end{align*}
The field $h_{\ell}$ is a centered continuous Gaussian field on $M$. Moreover, for $m>\ell$, the increments $(h_m-h_{\ell})(x)$ are independent and have nonnegative correlations:
\begin{align*}
	\E[(h_m-h_{\ell})(x)(h_m-h_{\ell})(y)]\geq0, \quad \forall x,y\in M, m>\ell.
\end{align*} 
See Section \ref{sec: aug} for the precise construction.

For $0<\gamma<\sqrt{2n}$ we denote by $\mu^{\gamma h}$ the \emph{Liouville quantum gravity measure} associated to the augmented field $h$. For $\kappa\in\R$, we consider $\P_{{n\kappa/\gamma}}$, which is the distribution of $h+n\kappa/\gamma$, with the convention that $\P_0=\P$. Formally it holds that
\begin{align*}
	d\P_{{n\kappa/\gamma}}(h)\propto\exp\left(-\frac12\left( \scalar{h}{\mathsf p h} + \frac1{\sigma^2}\langle h-\frac{n}\gamma\kappa\rangle^2\right)\right)dh.
\end{align*}
We define the \emph{Polyakov--Liouville measure} by
\begin{equation*}
	d{\boldsymbol{\nu}}_{\kappa,\gamma}(h):= \exp\Big(-\Theta\scalar{h}{Q}- m \,\mu^{\gamma h}(M)\Big)\,d\P_{{n\kappa/\gamma}}(h).
\end{equation*}
In order to study the semiclassical limit, 
we choose 
$$\Theta=\frac{na_n}{\gamma},  \quad m=\frac{na_n\Lambda_\kappa}{\gamma^2}.$$
We show convergence of the rescaled augmented field $\frac{\gamma}nh$ to the unique solution $u_\kappa$ of \eqref{const-Q} under
the normalized Polyakov-Liouville measure $\Q_{\kappa,\gamma}$, which is defined as
\begin{align*}
	\Q_{\kappa,\gamma}=\frac1{Z_{\kappa,\gamma}}\boldsymbol{\nu}_{\kappa,\gamma}.
\end{align*}
Moreover, we show that the first order fluctuation is given by an \emph{augmented massive field} $\tilde h$ on $(M,\hat g)$, where $\hat g=e^{2u_\kappa}g$, with mass $na_n\Lambda_\kappa$, which is a centered Gaussian field with covariance given by
\begin{align*}
	\E\left[\scalar{\tilde h}{\phi}\scalar{\tilde h}{\eta}\right]=\scalar{\phi}{\mathring {\mathsf p}_{na_n\Lambda_\kappa}^{-1}\eta} + \sigma^2\scalar{\phi}{1}\scalar{\eta}{1}, \qquad \forall \phi,\eta\in {\mathfrak D}, 
\end{align*} 
where 
\begin{align*}
	\mathring {\mathsf p}_{na_n\Lambda_\kappa}^{-1}(\phi)=(\mathsf p+ na_n\Lambda_\kappa)^{-1}(\phi-\langle \phi\rangle).
\end{align*}

Our main result reads as follows.

\begin{theorem}\label{thm: main}
	Let $(M,g)$ be an admissible manifold with $Q(M)\le0$. For $\kappa\in\R$, let $u_\kappa$ be the unique smooth solution to \eqref{const-Q}. 
	\begin{enumerate}[$(i)$]
		\item Under $\Q_{\kappa,\gamma}$, the rescaled augmented fields $\frac\gamma n h$ converge in law as $\gamma\to0$ 
		towards $u_\kappa$. 
		\item Under $\Q_{\kappa,\gamma}$, the fields $h-\frac n\gamma u_\kappa$ converge in law towards an augmented massive field $\tilde h$ on $(M,\hat g)$, where $\hat g=e^{2u_\kappa}g$, with mass $na_n\Lambda_\kappa$. 
	\end{enumerate}
	In both cases, the respective fields are regarded as random variables with values in $H^{-1}(M)$. In the borderline case $Q(M)=0$, the ``massive’’ field $\tilde h$ in $(ii)$ actually is mass-less.
\end{theorem}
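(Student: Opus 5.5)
Following \cite{lacoin2017semiclassical}, the plan is to recenter the field at the classical solution with the Cameron--Martin (Girsanov) theorem, so that the semiclassical limit becomes a Laplace-type expansion around the minimizer $u_\kappa$.

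\textbf{Step 1: Girsanov shift.} Write $h=\frac n\gamma u_\kappa+X$ under $\P_{n\kappa/\gamma}$. Since $u_\kappa=u_0+\kappa$ is smooth, it lies in the Cameron--Martin space $H^{n/2}$ of the augmented field, so the shift by $\frac n\gamma(u_\kappa-\kappa)$ is admissible. The Radon--Nikodym density is
\[
\exp\Big(-\tfrac n\gamma\scalar{X}{\mathsf p u_\kappa}-\tfrac{n^2}{2\gamma^2}\scalar{u_\kappa}{\mathsf p u_\kappa}\Big),
\]
with $X$ distributed as $\P$; the mean term in the augmentation is exactly absorbed by the choice of $\P_{n\kappa/\gamma}$.

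\textbf{Step 2: cancellation of the linear term.} Insert the shift into the Polyakov--Liouville weight. Two facts are used: $\mu^{\gamma h}=e^{nu_\kappa}\mu^{\gamma X}$, and $\Theta=na_n/\gamma$, $m=na_n\Lambda_\kappa/\gamma^2$. The terms linear in $X$ then collect into
\[
-\frac n\gamma\Big\langle X,\ \mathsf p u_\kappa+a_nQ+a_n\Lambda_\kappa e^{nu_\kappa}\Big\rangle ,
\]
plus the centered remainder coming from the exponential. By \eqref{eq: liouville0} the bracket vanishes. Up to constants absorbed in $Z_{\kappa,\gamma}$, the density of the law of $X$ with respect to $\P$ is therefore proportional to
\[
\exp\Big(-\frac{na_n\Lambda_\kappa}{\gamma^2}\int_M e^{nu_\kappa}\big(d\mu^{\gamma X}-1-\gamma X\big)\,d\vol\Big),
\]
where $\mu^{\gamma X}=e^{\gamma X-\frac{\gamma^2}{2}\E X^2}$, renormalized. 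Heuristically the integrand is $\approx\frac{\gamma^2}{2}\wick{X^2}$, so the exponent is $\approx-\frac{na_n\Lambda_\kappa}{2}\int e^{nu_\kappa}\wick{X^2}$. This is exactly the mass term that turns the covariance $\mathsf p$ into $\mathsf p+na_n\Lambda_\kappa$ with respect to $\hat g$, using $\mathsf p_{\hat g}=e^{-nu}\mathsf p_g$ and $d\vol_{\hat g}=e^{nu}d\vol$. Since $\Lambda_\kappa\ge0$ (because $Q(M)\le0$), the exponent $x\mapsto e^{x}-1-x\ge0$ makes the density bounded by $1$. This uniform bound is what allows dominated convergence.

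\textbf{Step 3: limit of the density.} Show that $F_\gamma(X):=\gamma^{-2}\int e^{nu_\kappa}(\mu^{\gamma X}-1-\gamma X)$ converges in $L^1(\P)$, or in probability, to $\frac12\int e^{nu_\kappa}\wick{X^2}$, the Wick-renormalized square. The tool is the white-noise truncation $X_\ell$ together with second-moment Gaussian multiplicative chaos estimates for small $\gamma$ (with $\gamma^2$ well below the $L^2$ threshold), expanding $e^{\gamma X_\ell-\gamma^2\E X_\ell^2/2}$ to second order. This is the main obstacle: an error control that is uniform in $\gamma$ and in the truncation level, involving third-order chaos terms of size $O(\gamma)$ and the non-integrability of the renormalization, together with the identification of the resulting limit.

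\textbf{Step 4: identification and conclusion.} Given Step 3 and the bound by $1$, we get $\E[\Phi(X)e^{-F_\gamma}]\to\E[\Phi(X)e^{-F_0}]$ for bounded continuous $\Phi$ on $H^{-1}$, and $\E[e^{-F_0}]>0$. A Gaussian computation, done by diagonalization or by Feldman--Hájek on finite-dimensional projections, shows that $e^{-F_0}\,d\P/Z$ is the centered Gaussian law with precision $\mathsf p+na_n\Lambda_\kappa e^{nu_\kappa}$ on the grounded part and the variance $\sigma^2$ augmentation; that is, the augmented massive field $\tilde h$ on $(M,\hat g)$. This proves (ii). Part (i) follows because $\frac\gamma n h-u_\kappa=\frac\gamma n X\to0$ in probability in $H^{-1}$, since $X$ is tight there. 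When $Q(M)=0$ we have $\Lambda_\kappa=0$, the interaction term vanishes, and $X\sim\P$ is massless.
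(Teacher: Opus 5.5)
Your overall architecture matches the paper's: a Girsanov shift by $\frac n\gamma u_\kappa$, cancellation of the linear term via the Liouville equation, convergence of the remaining functional to $\frac12\scalar{e^{nu_\kappa}}{\wick{X^2}}$, and Gaussian identification of the limit. However, the step that is supposed to justify the limit passage is false.

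The integrand in $F_\gamma$ is not of the form $e^{y}-1-y$. Because $\mu^{\gamma X}$ carries the infinite renormalization $e^{-\frac{\gamma^2}{2}\E X^2}$, at truncation level $\ell$ the integrand is $e^{\gamma X_\ell-\frac{\gamma^2}{2}k_\ell(x,x)}-1-\gamma X_\ell$. Its infimum over $X_\ell$ is $-\frac{\gamma^2}{2}k_\ell(x,x)$, and $k_\ell(x,x)\uparrow k(x,x)=+\infty$. So $\gamma^{-2}$ times the integrand is only bounded below by something of order $-\log\ell$, and the density $e^{-F_\gamma}$ is not bounded by $1$.

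In fact no uniform bound can hold, because the limit $e^{-F_0}$ with $F_0=\frac{na_n\Lambda_\kappa}{2}\scalar{e^{nu_\kappa}}{\wick{X^2}}$ is itself unbounded. In the $\hat g$-eigenbasis $\scalar{e^{nu}}{\wick{X^2}}=\sum_j\nu_j^{-1}(\xi_j^2-1)$, and $\sum_j\nu_j^{-1}=\infty$ by Weyl's law, so this sum is unbounded below. Consequently your dominated convergence argument in Step 4 fails. Convergence of $F_\gamma$ in probability alone gives, via Fatou, only $\liminf_{\gamma\to0}\E[e^{-F_\gamma}]\ge\E[e^{-F_0}]$. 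Mass could escape to regions where $F_\gamma$ is very negative, which would break both the normalization and the identification of the limit.

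The missing ingredient is a uniform exponential moment bound,
\[
\sup_{0<\gamma\le1}\E\big[\exp(-\gamma^{-2}\scalar{\varphi}{\mu^{\gamma h}-1-\gamma h})\big]<\infty
\]
for bounded positive $\varphi$. The paper applies it with $2\varphi$. Combined with $|e^{-a}-e^{-b}|\le(e^{-a}+e^{-b})|a-b|$ and Cauchy--Schwarz, it upgrades $L^2$ convergence of $F_\gamma$ to $L^1$ convergence of the densities.

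This bound is the technical heart of the paper (Lemmas \ref{lem: bounded} and \ref{lemma: set B}, adapting Lemma 3.6 of \cite{lacoin2017semiclassical}). The proof runs as follows:
\begin{enumerate}[$(a)$]
\item Truncate at $\ell(\gamma)=\lceil e^{\gamma^{-1/6}}\rceil$.
\item Isolate the event on which the region $\{|h_\ell|>(\log\ell)^2\}$ contributes. There only the crude bound $\frac12k_\ell(x,x)\le C\log\ell$ is available, and it is compensated by a super-polynomially small probability.
\item On the complement, use the third-order Taylor bound together with uniform exponential moments of $-\frac12\scalar{\varphi}{\wick{h_\ell^2}}$.
\item Remove the cutoff with the Gaussian FKG inequality, which uses the non-negative correlations of the white-noise increments, and a Chernoff bound.
\end{enumerate}

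Conversely, your Step 3, which you flag as the main obstacle, is the easy part. Writing the remainder as $\gamma^{-2}\sum_{\alpha\ge3}\frac{\gamma^\alpha}{\alpha!}\scalar{\wick{h^\alpha}}{\varphi}$ and using orthogonality of Wick powers gives variance $O(\gamma^2)$ directly.
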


\begin{remark}
	The result is consistent with the Laplace principle at speed $n^2/\gamma^2$. 
	This scaling naturally arises  by considering  the push forward under the map
	$h\mapsto \frac \gamma n h$ of
	the measure
	formally given as  
	\begin{equation*}
		d{\boldsymbol{\nu}}_{\kappa,\gamma}(h)= \exp\left(-\tilde S(h)\right)\, dh
	\end{equation*}
	with action functional  
	\[
	\tilde S(h)=\int_\M\left[\frac1{2}h \mathsf p h+\frac1{2\sigma^2}\langle h-\frac n\gamma \kappa\rangle^2 +\frac{na_n}\gamma Q h +\frac{na_n}{\gamma^2} \Lambda_\kappa e^{n h }\right]\, d\vol\,.
	\]
	After this re-scaling. 
	\begin{equation*}
		d{\boldsymbol{\nu}}^{\sf sc}_{\kappa,\gamma}(h)= \exp\left(-\tilde S\left(\frac n\gamma h\right)\right)\, dh =  \exp\left(-\frac{n^2}{\gamma^2}\hat S( h)\right)\, dh 
	\end{equation*}
	with action functional  
	\begin{align*}
		\hat S(h)=
		\int_\M\left[\frac12h \mathsf p h+\frac1{2\sigma^2}\langle h-\kappa\rangle^2 +a_nQ h +\frac{a_n}n \Lambda_\kappa e^{n h }\right]\, d\vol\,.
	\end{align*}
	The Laplace principle formally then says  that for a measurable set $A$
	\begin{align*}
		\lim_{\gamma\to0} \frac{\gamma^2}{n^2}\log\int_A\exp\left(-\frac{n^2}{\gamma^2}\hat S(h)\right)\, dh=-\inf_{h\in A} \hat S(h),
	\end{align*}
	Moreover,
	\begin{align*}
		\inf_{h\in A}\hat S(h)=\inf_{h\in A:\, \langle h\rangle =\kappa}\hat S(h).
	\end{align*}
	Thus the Euler-Lagrange equation (together with the constraint for the mean value) reads as
	\begin{align*}
		\frac1{a_n}\mathsf pu+Q+\Lambda_\kappa e^{nu}=0,\qquad  \langle u\rangle =\kappa.
	\end{align*}
	According to the transformation formula \eqref{trafo-Q}, this yields
	\begin{align*}
		Q_{e^{2u}g}=-\Lambda_\kappa,\qquad  \langle u\rangle =\kappa,
	\end{align*}
	in consistency with \eqref{const-Q}.
\end{remark}

\begin{remark}
	This work significantly extends the results by \cite{lacoin2017semiclassical} to higher even dimensions and non-flat geometries. Although the authors remark that their approach may extend to higher even dimensions using the GJMS operator at least in flat spaces \cite[p. 5]{lacoin2017semiclassical}, this extension is far from straightforward. For instance, in two dimensions, the white noise approximation 
	\begin{align*}
		h_\eps(x)\propto\int_{\eps^2}^\infty \int p_{r/2}(x,y)W(dr,dy)
	\end{align*}
	is well defined,
	where $W$ is space-time white noise. In higher dimensions, however, no direct analogue of this construction is available. In particular, the parabolic kernel associated with the operator $\mathsf p$ generally changes sign. This motivates why we introduce the white noise truncation $(h_{\ell})_{l\in\N}$. Furthermore, the paper \cite{lacoin2017semiclassical} studies the Gaussian free field on the disc with boundary condition, whereas the present work considers compact manifolds without boundaries. Even in $n=2$, our approach extends previous work since it is intrinsic and it applies to surfaces of arbitrary genus $\mathfrak g\ge1$.

\end{remark}

\begin{remark}
	
	More recently, semiclassical limits have been studied in boundary Liouville conformal field theory \cite{Cer25}. This work is subject to two-dimensional conformal invariance and boundary conditions, and the limiting behavior is described in terms of classical Liouville equations on domains with boundary. In contrast to \cite{lacoin2017semiclassical} and \cite{Cer25}, the present paper develops a framework valid on general compact Riemannian manifolds of even dimensions without boundary. The approach is purely real-analytic and operator-theoretic and does not rely on the complex-analytic structure or explicit conformal parametrizations available in two dimensions.
	
\end{remark}

\begin{remark}
	The role of the conformal radius requires a more subtle interpretation in dimensions $n>2$. To this end, let 
	\begin{align*}
		r(x)=\limsup_{y\to x}\left[k(x,y)-\log \frac1{d(x,y)}\right]\quad\forall x\in M.
	\end{align*}
	The \emph{adjusted Liouville quantum gravity measure} in \cite{Del24} is defined by 
	\begin{align*}
		d	\bar\mu^{\gamma h}(x)=e^{\frac{\gamma^2}2r(x)}d\mu^{\gamma h}(x).
	\end{align*}
	This additional normalization term $r(x)$ corresponds to the logarithm of the conformal radius $\log C(x,\mathbf U)$ in \cite{lacoin2017semiclassical}. Since $\bar\mu^{\gamma h}$ differs from $\mu^{\gamma h}$ only by a deterministic multiplicative weight of order $\gamma^2$, this modification does not affect the semiclassical limit as $\gamma\to0$. For this reason,  we simply consider $\mu^{\gamma h}$.
\end{remark}

We denote by $\Q_{\kappa,\gamma}^{\mathrm{sc}}$ the distribution of the rescaled fields 
$$\hbar:=\frac\gamma n h$$
under the normalized Polyakov-Liouville measure $\Q_{\kappa,\gamma}$. 
This way, indeed, we then prove that the rescaled Polyakov-Liouville measures concentrate on the set of conformal weights which lead to manifolds of constant $Q$-curvature. Indeed, for any  $\Lambda\ge0$ we can choose $\kappa\in\R$ such that the above random construction in the semiclassical limit leads to  manifolds of constant $Q$-curvature equal to $-\Lambda$.

\begin{corollary} Let $\Q_{\kappa,\gamma}^{\mathrm{sc}}$ denote the distribution of the rescaled fields 
	$\hbar:=\frac\gamma n h$ under the normalized Polyakov-Liouville measure $\Q_{\kappa,\gamma}$. Then under the assumptions of Theorem \ref{thm: main} as $\gamma\to0$,
	$$\Q_{\kappa,\gamma}^{\mathrm{sc}} \ \to \ \delta_u\quad\textrm{weakly as measures on }H^{-1}(M),$$
	where $u=u_\kappa$ denotes the unique smooth function on $M$
	with
	$\langle u\rangle =\kappa$ such that
	$e^{2u}g$ has constant $Q$-curvature.
\end{corollary}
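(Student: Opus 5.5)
The corollary is a reformulation of Theorem \ref{thm: main}$(i)$, so the plan is to derive it from there rather than reprove the concentration. By definition, $\Q_{\kappa,\gamma}^{\mathrm{sc}}$ is the law of $\hbar=\frac\gamma n h$ under $\Q_{\kappa,\gamma}$, viewed as a random element of $H^{-1}(M)$. Theorem \ref{thm: main}$(i)$ states that these laws converge in law to the constant $u_\kappa$. Convergence in law to a deterministic element is the same as weak convergence of the laws to $\delta_{u_\kappa}$ on the Polish space $H^{-1}(M)$. This holds because $u_\kappa\in C^\infty(M)\subset H^{-1}(M)$, so $\delta_{u_\kappa}$ is a Borel probability measure there. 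Hence $\int F\,d\Q_{\kappa,\gamma}^{\mathrm{sc}}\to F(u_\kappa)$ for every bounded continuous $F$ on $H^{-1}(M)$.

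Equivalently, one can use the standard fact that convergence in law to a constant is equivalent to convergence in probability. So it suffices to show $\Q_{\kappa,\gamma}(\|\hbar-u_\kappa\|_{H^{-1}}>\delta)\to0$ for each $\delta>0$. This already follows from part $(ii)$ of the theorem. There, $h-\frac n\gamma u_\kappa$ converges in law in $H^{-1}(M)$, so the family is tight and in particular bounded in probability. Multiplying by $\frac\gamma n\to0$ then gives $\hbar-u_\kappa=\frac\gamma n\big(h-\frac n\gamma u_\kappa\big)\to0$ in probability, by Slutsky's lemma.

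Next we must identify the limit $u=u_\kappa$ with the description in the corollary. This comes from Corollary \ref{cor: liouville}: under $Q(M)\le0$ and admissibility, $u_\kappa$ is the unique smooth function with $\langle u\rangle=\kappa$ such that $e^{2u}g$ has constant $Q$-curvature. By \eqref{trafo-Q}, that constant equals $-\Lambda_\kappa$.

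The only genuinely nontrivial input is Theorem \ref{thm: main} itself. Within the corollary, the step that needs care is the topological bookkeeping. Both the convergence in law and the weak convergence must be taken on the same space $H^{-1}(M)$, with continuity of the relevant maps (the scaling and the translation by the fixed element $u_\kappa$) in that space. Given that, the proof is immediate.
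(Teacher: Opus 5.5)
Your proposal is correct and matches the paper. The paper gives the corollary no separate proof: it is Theorem \ref{thm: main}$(i)$ rephrased as weak convergence of $\Q_{\kappa,\gamma}^{\mathrm{sc}}$ to $\delta_{u_\kappa}$ on $H^{-1}(M)$, which is exactly what the proof of part $(i)$ in Section \ref{sec: main} establishes (after reducing to $\kappa=0$ via Lemma \ref{lemma: Qkappa}), with $u_\kappa$ identified through Corollary \ref{cor: liouville}. Your alternative route through part $(ii)$ and Slutsky is also valid, though it relies on the stronger fluctuation result and is not needed.
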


\begin{remark} One easily verifies that the distribution of $\hbar$ under $\Q_{\kappa,\gamma}^{\sf sc}$ coincides with the distribution of $\hbar+\kappa$ under $\Q_{0,\gamma}^{\sf sc}$, see Lemma \ref{lemma: Qkappa}. Furthermore, as already observed
	$u_\kappa=u_0+\kappa$. Thus the convergence result for general $\kappa\in\R$ is an immediate consequence of the  convergence result for $\kappa=0$. 
\end{remark}
\subsection*{Organization of the paper}
In Section \ref{sec: co} we introduce the Graham--Jenne--Mason--Sparling operator of maximal order on admissible manifolds. In Section \ref{sec: liou} we show existence and uniqueness of solutions to the Liouville equation. Section \ref{sec: aug} contains the definition of the augmented field and its truncations, while Section \ref{sec: main} is devoted to the proof of Theorem \ref{thm: main}.

\begin{acknowledgement}
	MG gratefully acknowledges funding by NSF Grant DMS-2246549 and a Bonn Research Chair (the Hausdorff Center for Mathematics, Germany). 
	
	EK and KTS gratefully acknowledge funding by the Deutsche Forschungsgemeinschaft through the project `Random Riemannian Geometry' within the SPP 2265 \emph{Random Geometric Systems}, 
	through the Hausdorff Center for Mathematics (project ID 390685813), and through  project B03  within the CRC 1060 (project ID 211504053).
	
	Data sharing not applicable to this article as no datasets were generated or analyzed during the current study.
\end{acknowledgement}

\section{GJMS operators and admissible manifolds}\label{sec: co}
For a given manifold $(M,g)$ of even dimension $n$, we consider the (normalized) \emph{Graham--Jenne--Mason--Sparling operator  of maximal order}
\begin{equation*}
	\mathsf{p}\coloneqq a_n\,(-\Delta)^{n/2} + \text{low order terms}, \quad a_n=\frac{2}{\Gamma(n/2)(4\pi)^{n/2}}.
\end{equation*}
This differential operator~$\mathsf{p}$  has been first defined in \cite{GJMS92} and plays the role of a conformally covariant power of the Laplacian, i.e. if $\hat g=e^{2u}g$ for some $u\in C^\infty(M)$, then
\begin{align}\label{eq: gjms}
	\mathsf{p}_{\hat g}=e^{-nu}\mathsf{p}_{g}.
\end{align}
It has been shown in \cite{GZ03} that $\mathsf{p}$ is self-adjoint and annihilates constants.
For $n=2$, in our sign convention the operator~$\mathsf{p}$ is just $-\frac1{2\pi}\Delta$, and for $n=4$ it is the \emph{Paneitz operator} \cite{Paneitz}.

We call a compact even-dimensional Riemannian manifold~$(M,g)$ \emph{admissible} if
$$\mathsf p>0 \text{ on }\mathring H,$$
where
$\mathring H= \big\{\phi\in L^2(\M): \, \langle\phi\rangle=0\big\}$.

Admissibility is a conformal invariant.
All compact, non-negatively curved Einstein manifolds are admissible, and so are all compact hyperbolic manifolds with spectral gap $\lambda_1>\frac{n(n-2)}4$.
Of course, all compact 2-dimensional Riemannian manifolds are admissible.

One of the main results in \cite{Del24} states that for every admissible manifold, the inverse of~$\mathsf p$ on~$\mathring H$ has an integral kernel $k_0$ which annihilates constants and has precise logarithmic divergence \cite[Theorem 2.19]{Del24}
\begin{equation}\label{eq: log}
	\Big|k_0(x,y)-\log\frac1{d(x,y)}\Big|\le C\,.
\end{equation}

\subsubsection*{Sobolev spaces and pairings}
For  $s \in \mathbb{R}_+$, we define the usual \emph{Sobolev spaces}  ${\mathcal H}^{s}\coloneqq  {(1-\Delta)}^{-\frac{s}{2}} L^2$ with norm
$\|\phi\|_{{\mathcal H}^s}\coloneqq \| (1-\Delta)^{\frac s2}\phi\|_{L^2}$, and ${\mathcal H}^{-s}$ as the completion of $L^2$ w.r.t.~the norm
$\|\phi\|_{{\mathcal H}^s}\coloneqq \| (1-\Delta)^{-\frac s2}\phi\|_{L^2}$
such that formally ${\mathcal H}^{-s}\coloneqq  {(1-\Delta)}^{\frac{s}{2}} L^2$.


We define
$$H^{s}\coloneqq  {(1+\mathsf p)}^{-\frac{s}{n}} L^2, \qquad
\|\phi\|_{H^s}\coloneqq \| (1+\mathsf p)^{\frac s{n}}\phi\|_{L^2}$$
and set $H^{-s}$ as the completion of $L^2$ w.r.t.~the norm
$\|\phi\|_{H^{-s}}\coloneqq \| (1+\mathsf p)^{-\frac s{n}}\phi\|_{L^2}$.
Moreover,
we define the  \emph{grounded Sobolev spaces}
$$\mathring H^{s}\coloneqq  {\mathsf p}^{-\frac{s}{n}} \mathring H, \qquad
\|\phi\|_{\mathring H^s}\coloneqq \| {\mathsf p}^{\frac s{n}}\phi\|_{\mathring H}$$
and define $\mathring H^{-s}$ as the completion of $\mathring H$ w.r.t.~the norm
$\|\phi\|_{\mathring H^{-s}}\coloneqq \| {\mathsf p}^{-\frac s{n}}\phi\|_{\mathring H}$.

For every~$s\in\R$, the spaces $\mathcal H^s$ and $H^s$ coincide as sets and the respective norms are bi-Lipschitz equivalent to each other.

The operator $\mathsf{p}$ with domain $H^n$ has discrete spectrum
$$\mathrm{spec}(\mathsf{p})= \set{\nu_j}_{j\in \mathbb{N}_0},$$
indexed with multiplicities, satisfying~$\nu_j \geq 0$ for all~$j$, and~$\nu_0=0$ with multiplicity~$1$.
The corresponding family of eigenfunctions~$\seq{\psi_j}_{j\in\mathbb{N}_0}$ forms an orthonormal basis of~$L^2$.
Further, the spectrum of $\mathsf p$ satisfies the Weyl asymptotic, i.e.
\begin{equation}\label{eq: weyl}
	\nu_{j} = c j + O(j^{1-1/n})\comma \qquad j \to \infty\fstop
\end{equation}

For every~$r\in\R$, the bounded operator $\mathsf{p}\colon \mathring{H}^{n+r}\to \mathring{H}^r$ has bounded inverse $\mathsf{k}_0\colon \mathring{H}^r\to\mathring{H}^{n+r}$.
For $r=0$, the operator $\mathsf{k}_0\colon \mathring{L}^2\to \mathring{L}^2$
admits a unique non-relabeled extension $\mathsf{k}_0\colon L^2\to \mathring{L}^2$, vanishing on constants and satisfying~$\mathsf{k}_0\mathsf{p}=\pi$ on~$L^2$, where
\[
\pi(\phi)\coloneqq \phi-\langle \phi\rangle
\ .
\]

This extension is an integral operator on~$L^2$ with symmetric kernel
\begin{align*}
	k_0(x,y)=  \sum_{j =1}^\infty \frac{\psi_j(x)\, \psi_j(y)}{\nu_j} \comma \qquad x,y\in M\comma
\end{align*}
where the convergence of the series is understood in~$L^2\otimes L^2$. See Lemma 2.15 in \cite{Del24} for these facts.

We denote the associated bilinear forms for the Hilbert spaces $H^{n/2}$ and $\mathring H^{-n/2}$ by
\begin{align*}
	\mathfrak{p}(\phi,\eta) &\coloneqq \int \sqrt{\mathsf{p}} \phi\, \sqrt{\mathsf{p}} \eta \, d \vol\ , \qquad & \phi,\, \eta \in H^{n/2} \comma\\
	\mathcal{K}_0(\phi,\eta) &\coloneqq \int \sqrt{\mathsf{k}_0} \phi\, \sqrt{\mathsf{k}_0} \eta \, d\vol\ , \qquad & \phi,\, \eta \in \mathring H^{-n/2} \fstop
\end{align*}

Restricted to the space $\mathring H$, the bilinear form 
$\mathcal{K}_{0}$ is given by
\begin{align}
	\mathcal{K}_0(\phi,\eta)&= \scalar{\mathsf{k}_0\phi}{\eta}_{L^2}
	=\iint \phi(x)\, k_0(x,y)\, \eta(y)\, d\vol(x) \, d\vol(y) \,.
\end{align}
Observe that the right-hand side here is also well defined for ungrounded $\phi,\eta\in L^2$ which allows us to consider 
$\mathcal{K}_{0}$  also  as a bilinear form on $L^2$ with
$\mathcal{K}_0(\phi+C,\eta+C')=\mathcal{K}_0(\phi,\eta)$ for $\phi,\eta\in \mathring H(\M)$ and $C,C'\in\R$. 

\subsubsection*{Square-root kernel asymptotics}
In the following we consider the pseudo-differential operator $\sqrt{\mathsf p}$ on an admissible manifold $M$. We recall that since $\mathsf p$ is positive and self-adjoint it has a positive self-adjoint square root $\sqrt{\mathsf p}$ with inverse $\sqrt{\mathsf k_0}\colon \mathring H\to \mathring H^{n/2}$.
By spectral decomposition, $\sqrt{\mathsf k_0}$ is an integral operator with symmetric kernel $g_0(x,y)$, so that \(g_0\) satisfies
\begin{align*}
	\sqrt{\mathsf k_0}\, \phi(x)=\int g_0(x,y)\phi(y)\, d\vol(y).
\end{align*}
By definition of the square root and the integral operators $\mathsf k_0,\sqrt{\mathsf k_0}$, the relation between $k_0$ and $g_0$ is expressed in the following way
\begin{align}\label{eq: green}
	k_0(x,y)=\int g_0(x,z)g_0(y,z)\, d\vol(z).
\end{align}

The next lemma provides the asymptotic behavior of the kernel $g_0(x,y)$ close to the diagonal.

\begin{lemma}\label{est-sqrt-k} $\exists C_0,C_1, \forall x,y$:
	\begin{align}\label{eq: green estimate}
		-C_0\le g_0(x,y)\le C_0+C_1 \frac1{d(x,y)^{n/2}}.
	\end{align}
\end{lemma}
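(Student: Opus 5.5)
The plan is to write $\sqrt{\mathsf k_0}$ as a subordinated heat-type integral and analyze the resulting kernel with standard heat kernel bounds. By the spectral theorem, on $\mathring H$ we have
\begin{align*}
\sqrt{\mathsf k_0}=\mathsf p^{-1/2}=\frac{1}{\Gamma(1/2)}\int_0^\infty t^{-1/2}\, e^{-t\mathsf p}\pi\, dt ,
\end{align*}
so that
\begin{align*}
g_0(x,y)=\frac{1}{\sqrt\pi}\int_0^\infty t^{-1/2}\Big(q_t(x,y)-\frac1{|M|}\Big)dt ,
\end{align*}
where $q_t$ denotes the kernel of $e^{-t\mathsf p}$. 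Equivalently, $g_0(x,y)=\sum_{j\ge1}\nu_j^{-1/2}\psi_j(x)\psi_j(y)$. I split the $t$-integral at $t=1$ and treat the two ranges separately.

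\emph{Large times, $t\ge1$.} Admissibility gives a spectral gap $\nu_1>0$. Since $e^{-\mathsf p}$ is smoothing, with kernel bounded in $L^\infty$ (using the Weyl asymptotics \eqref{eq: weyl} and sup-norm bounds on eigenfunctions), we get
\begin{align*}
\sup_{x,y}\Big|q_t(x,y)-\frac1{|M|}\Big|\le Ce^{-\nu_1 t}, \qquad t\ge1 .
\end{align*}
Hence this range contributes a bounded term, which can be absorbed into $C_0$ on both sides.

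\emph{Small times, $t\le1$.} Here $\mathsf p=a_n(-\Delta)^{n/2}+\text{lower order}$ is an elliptic differential operator of order $n$ with positive principal symbol. Standard parametrix estimates for higher-order parabolic equations on compact manifolds give the Gaussian-type bound
\begin{align*}
|q_t(x,y)|\le Ct^{-1}\exp\Big(-c\,\big(d(x,y)^n/t\big)^{1/(n-1)}\Big), \qquad 0<t\le1 .
\end{align*}
Here $t^{-1}=t^{-n/n}$ is the on-diagonal scaling. Integrating against $t^{-1/2}$ and substituting $t=d^n s$ yields
\begin{align*}
\int_0^1 t^{-1/2}|q_t|\,dt\le C d^{-n/2}\int_0^\infty s^{-3/2}e^{-cs^{-1/(n-1)}}ds\le C_1 d(x,y)^{-n/2},
\end{align*}
and the constant piece $\int_0^1 t^{-1/2}|M|^{-1}dt$ is bounded. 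This gives the upper bound.

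\emph{Lower bound.} This is the main obstacle: for $n\ge4$ the kernel $q_t$ changes sign, so the absolute-value estimate above only gives $g_0\ge -C_0-C_1d^{-n/2}$, which is not enough. The plan is to use pseudo-differential calculus instead. Since $\mathsf p^{-1/2}$ is a classical pseudo-differential operator of order $-n/2$ with principal symbol $a_n^{-1/2}|\xi|^{-n/2}$, its kernel near the diagonal equals
\begin{align*}
c_n\, d(x,y)^{-n/2}+O\big(d(x,y)^{-n/2+1}\big)+\text{smooth}, \qquad c_n>0 ,
\end{align*}
because the Fourier transform of $|\xi|^{-n/2}$ in $\R^n$ is a positive multiple of $|z|^{-n/2}$. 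Lower-order symbols of orders $-n/2-k$ produce terms $O(d^{-n/2+k})$, plus possibly logarithmic terms when $-n/2-k=-n$, all of which are dominated by the leading positive term. Therefore $g_0(x,y)\ge0$ whenever $d(x,y)<\delta$ for some $\delta>0$. Away from the diagonal, for $d(x,y)\ge\delta$, the kernel is smooth, hence bounded on the compact set $\{d\ge\delta\}$. Combining the two regions gives $g_0\ge-C_0$. The same expansion also re-proves the upper bound, so the heat-kernel argument mainly serves to control the smooth remainder globally.
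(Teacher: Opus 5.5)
Your argument is correct, and it takes a genuinely different route from the paper. The paper never works with the heat kernel or the symbol of $\mathsf p$ itself. It first bounds the kernel $\tilde g_0$ of $(-\Delta)^{-n/4}$ by subordination to the positive grounded heat kernel of the Laplacian, quoting \cite{Del24}. It then transfers the two-sided bound to $g_0$ by an operator comparison: $\sqrt{\mathsf p}-(-\Delta)^{n/4}$ is treated as a form-small perturbation, and a Cauchy--Schwarz estimate at the level of kernels, together with the logarithmic bounds on $k_0$ and $\tilde k$, controls $g_0-(1+\epsilon)\tilde g_0$ from above. Exchanging roles gives the lower bound.

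You instead analyse $\mathsf p$ directly. You use higher-order parabolic Gaussian bounds for the upper bound, and Seeley's complex powers plus the near-diagonal expansion of classical pseudo-differential kernels for the lower bound. The paper's route stays inside second-order heat-kernel theory and the tools of \cite{Del24}, but its comparison step needs care. For instance, writing $L-L_0=(L+L_0)^{-1}(L^2-L_0^2)$ presumes that $L$ and $L_0$ commute. Also, $L_0$ must be normalised as $\sqrt{a_n}(-\Delta)^{n/4}$ to share the principal symbol of $\sqrt{\mathsf p}$. Your route uses heavier but entirely standard machinery that absorbs all lower-order terms and normalisations automatically. It also yields the sharper statement $g_0(x,y)=c_n\,d(x,y)^{-n/2}(1+o(1))$ as $d(x,y)\to0$, with explicit $c_n>0$.

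Two small remarks. First, the near-diagonal expansion together with boundedness of the smooth kernel on $\{d\ge\delta\}$ already gives both inequalities. So the heat-kernel part, including the higher-order Gaussian bound, which is your heaviest input, can simply be dropped; it is not needed to control the smooth remainder, which is automatically bounded on the compact set $M\times M$. Second, because $\mathsf p$ annihilates constants, you should apply Seeley's theorem to the invertible operator $\mathsf p+\Pi_0$, where $\Pi_0\phi=\langle\phi\rangle$ is smoothing. Then write $\sqrt{\mathsf k_0}=(\mathsf p+\Pi_0)^{-1/2}-\Pi_0$, which is a classical pseudo-differential operator of order $-n/2$ with principal symbol $a_n^{-1/2}|\xi|^{-n/2}$.
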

With $C_0$ as above, 
henceforth we will denote
$$\sigma:=\sqrt{|M|} \, C_0.$$

\begin{proof} 
	Let $\tilde g_0$ denote the kernel for $(-\Delta)^{-n/4}$. By Lemma 2.10 in \cite{Del24}
	$$\tilde g_0(x,y)=\frac1{\Gamma(\frac{n}4)}\int_0^\infty t^{\frac n4-1}p_t^0(x,y)\, dt,$$
	where $p_t^0(x,y)=p_t(x,y)-\frac1{|M|}$ denotes the grounded heat kernel.  Then there exist $ C_0,C_1>0$ such that $\forall x,y$
	$$-C_0\le \tilde g_0(x,y)\le C_0+C_1 \frac1{d(x,y)^{n/2}}.$$
	Indeed, for the massive kernel, i.e. $(\alpha-\Delta)^{-n/4}(x,y)$ with $\alpha>0$ the claim was derived in 
	\cite[Lemma 2.10 (iv)]{Del24}. The arguments in the proof of Proposition 2.13 in \cite{Del24} allow to extend these estimates to all $\alpha>-\lambda$, in particular to $\alpha=0$.
	
	Recall that similarly to \eqref{eq: green}, 
	$\tilde k(x,y)=\int \tilde g_0(x,z)\tilde g_0(z,y)d\vol(z)$ is the kernel for the operator $(-\Delta)^{n/2}$. Furthermore, according to \cite{Del24},
	$$|k_0(x,y)-\tilde k(x,y)|\le C, \qquad -C\le \tilde k(x,y)\le C+C' \log \frac1{d(x,y)}.$$
	
	Put $L:=\sqrt{\sf p}$ and $L_0=(-\Delta)^{n/4}$. We know that $L^2-L^2_0$ is a form small perturbation of $L_0^2$. Thus also $L-L_0$ is a form small perturbation of $L_0$, that is, for every $\epsilon>0$ there exists a $C$ such that
	$$-\epsilon L_0-C\le L-L_0\le \epsilon L_0+C$$
	(in the sense of quadratic forms). 
	Indeed,
	\begin{eqnarray*}
		L-L_0&=\frac1{L+L_0}(L^2-L_0^2)\le \frac1{L+L_0}(\epsilon (L^2+L_0^2)+C)\le \epsilon(L+L_0)+C'.
	\end{eqnarray*}
	
	Therefore, since $(1+\epsilon) L-L_0+C\ge0$
	\begin{align*}
		g_0(x,y)&-(1+\epsilon)\tilde g_0(x,y)+C\int \tilde g_0(x,z)g_0(z,y) d\vol(z)\\
		&=\int \tilde g_0(x,z)\Big[(1+\epsilon) L-L_0+C\Big]_zg_0(z,y)d\vol(z)\\
		&\le
		\bigg(\int \tilde g_0(x,z)\Big[(1+\epsilon) L-L_0+C\Big]_z\tilde g_0(z,y)d\vol(z)\bigg)^{1/2}\\
		&\quad\cdot\bigg(\int g_0(x,z)\Big[(1+\epsilon) L-L_0+C\Big]_zg_0(z,y)d\vol(z)\bigg)^{1/2}.
	\end{align*}
	Computing further
	\begin{align*}
		g_0(x,y)&-(1+\epsilon)\tilde g_0(x,y)+C\int \tilde g_0(x,z)g_0(z,y) d\vol(z)\\
		&\le
		\bigg(\int \tilde g_0(x,z)\Big[\big((1+\epsilon)^2-1) L_0+(2+\epsilon)C\Big]_z\tilde g_0(z,y)d\vol(z)\bigg)^{1/2}\\
		&\quad\cdot\bigg(
		\int g_0(x,z)\Big[\big((1+\epsilon)^2-1) L+(2+\epsilon)C\Big]_zg_0(z,y)d\vol(z)
		\bigg)^{1/2}\\
		&\stackrel{(\ast)}=\Big((2\epsilon+\epsilon^2)\tilde g_0(x,y)+(2+\epsilon)C \tilde k(x,y)\Big)^{1/2}\\
		&\quad\cdot
		\Big((2\epsilon+\epsilon^2)g_0(x,y)+(2+\epsilon)C k_0(x,y)\Big)^{1/2}\\
		&\stackrel{(\ast\ast)}\le (\epsilon+\epsilon^2/2)\tilde g_0(x,y)+ (\epsilon+\epsilon^2/2)g_0(x,y)\\
		&\quad+(1+\epsilon/2)C \tilde k(x,y)+(1+\epsilon/2)C k_0(x,y),
	\end{align*}
	where we used the concavity of the square root function $(\ast\ast)$.
	The equality in $(\ast)$ in particular implies that
	$$(2\epsilon+\epsilon^2)g_0(x,y)+(2+\epsilon)C k_0(x,y)\ge0.$$
	Thus the negative part of $g_0$ is dominated by a multiple of the positive part of $k_0$. Hence, 
	\begin{align*}-\int \tilde g_0(x,z)g_0(z,y)d\vol(z)&\le \int \tilde g_0^-\, g_0^+d\vol(z)+\int g_0^-\, \tilde g_0^+d\vol(z) \\
		&\le C' \int g_0^+d\vol(z)+C'\int \tilde k^+\tilde g_0^+d\vol(z)\\
		&= C' \int g_0^-d\vol(z)+C'\int \tilde k^+\tilde g_0^+d\vol(z)\le C''.
	\end{align*}
	Therefore, for small $\epsilon$,
	\begin{align*}
		g_0(x,y)&\le (1+4\epsilon) \tilde g_0(x,y)+2C (1+2\epsilon)\tilde k(x,y)+C'\\
		&\quad-C\frac1{1-\epsilon-\epsilon^2/2}\int \tilde g_0(x,z)g_0(z,y)d\vol(z)\\
		&\le (1+4\epsilon) \tilde g_0(x,y)+2C (1+2\epsilon)\tilde k(x,y)+C'''.
	\end{align*}
	This proves the upper estimate. Interchanging the roles of $g_0, L, k_0$  and $\tilde g_0,L_0,\tilde k$ yields
	$$\tilde g_0(x,y)\le (1+4\epsilon) g_0(x,y)
	+2C (1+2\epsilon)\tilde k(x,y)+C'''$$ which then proves the lower estimate.
\end{proof}

\section{Liouville equation}\label{sec: liou}

GJMS operators are closely related to Branson’s Q-curvature in even dimension,
another important notion in conformal geometry.
The notion of $Q$-curvature was introduced in \cite{Bra85} and its crucial property is its behavior under conformal transformations. It has been shown in \cite{Bra85} that if $\hat g=e^{2u}g$, then
\begin{align}\label{eq: q}
	e^{nu}Q_{\hat g}=Q_g+\frac1{a_n}\mathsf{p}_gu.
\end{align}
As a direct consequence one obtains that
the total $Q$-curvature 
$$Q(M)=\int_MQ\, d\vol$$
is a conformal invariant.
Explicit formulas for Branson's $Q$-curvature are only known in low dimension or for Einstein manifolds. In our sign convention, e.g. if $n=2$ then $Q=\frac12\mathrm{R}$ and if $n=4$ then $Q=-\frac16\Delta\mathrm R-\frac12|\Ric|^2+\frac16\mathrm R^2$. For the round sphere, the equation becomes $Q=(n-1)!$. See \cite[Section 1.4]{Del24} for a comprehensive exposition on these examples.

As for the uniformization theorem for compact surfaces, one can ask for a conformal change of metric $\hat g=e^{2 u}g$ so that the new metric has constant $Q$-curvature $-\Lambda$ for some $\Lambda \in \R$. By the transformation rules \eqref{eq: gjms} and \eqref{eq: q} for $\mathsf p_g$ and $Q_g$ this problem is equivalent to finding a solution of
\begin{align}\label{eq: liouville}
	\frac1{a_n}\mathsf p_gu+Q_g+\Lambda e^{nu}=0, \qquad \Lambda =-\frac{Q(M)}{\int e^{nu}\, d\vol}.
\end{align}

Equation \eqref{eq: liouville} is a natural generalization of the classical two dimensional Liouville equation, which first appeared through the Moser-Trudinger inequality in \cite{Moser}. This equation has been studied under various assumptions in the two-dimensional case in \cite{Moser,kazdan1974curvature}, in the four-dimensional case in  \cite{ChangYang95,Djadli} and in the general even-dimensional case in \cite{Brendle03,Ndiaye}.
We summarize the main results from \cite{Ndiaye} in the following theorem. For this we note that the total $Q$-curvature on the standard $n$-sphere $S^n$ is given by 
$$Q(S^n)=(n-1)!\omega_n,$$
where $\omega_n=\frac{2\pi^{(n+1)/2}}{\Gamma((n+1)/2)}$ is the volume of the $n$-sphere.
\begin{theorem}\label{thm: liouville}
	Let $(M,g)$ be a closed Riemannian manifold of even dimension $n$. Suppose that $\mathsf p$ 
	only annihilates constants 
	and that the total $Q$-curvature satisfies
	\begin{align*}
		Q(M)\neq k(n-1)!\omega_n,
	\end{align*}
	for $k=1,2,\ldots$. Then there exists a smooth function $u\in C^\infty(M)$ such that the $Q$-curvature of the conformal metric $\hat g=e^{2u}g$ equals some constant $-\Lambda$. Equivalently, the pair $(u,\Lambda)$ satisfies \eqref{eq: liouville}. Moreover, $u$ can be obtained as a critical point of the Euler Lagrange functional 
	\begin{align*}
		\Pi(u)=\int \frac 1{2a_n} u \mathsf pu + Q u\, d\vol -  \frac{Q(M)}{n}\log \int e^{nu}\, d\vol
	\end{align*}
	defined on $H^{n/2}(M)$.
\end{theorem}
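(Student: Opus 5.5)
The plan is to reduce the existence statement to a variational problem for the functional $\Pi$ on $H^{n/2}(M)$ and treat it with the Moser–Trudinger–Adams inequality for $\mathsf p$.

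\emph{Step 1: functional framework and symmetries.} Since $\mathsf p$ annihilates constants and $\int Q\,d\vol = Q(M)$, we have $\Pi(u+c)=\Pi(u)$ for all $c\in\R$. We may therefore restrict $\Pi$ to $\mathring H^{n/2}$, the functions with $\langle u\rangle=0$. We check that $\Pi$ is $C^1$ there, using that $e^{nu}\in L^p$ for $u\in H^{n/2}$; this is the Adams inequality, the analogue of Moser–Trudinger for $\mathsf p$. We then compute the differential:
\[
\Pi'(u)[\phi]=\int\Big(\tfrac1{a_n}\mathsf pu+Q-Q(M)\tfrac{e^{nu}}{\int e^{nu}d\vol}\Big)\phi\,d\vol .
\]
A critical point therefore solves \eqref{eq: liouville} with $\Lambda=-Q(M)/\int e^{nu}d\vol$. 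Elliptic regularity for the order-$n$ operator $\mathsf p$, combined with a bootstrap using $e^{nu}\in L^p$, then gives $u\in C^\infty(M)$.

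\emph{Step 2: minimization when $Q(M)<(n-1)!\omega_n$.} Kernel triviality of $\mathsf p$ on $\mathring H$ gives coercivity of the quadratic part, $\int u\mathsf pu\ge c\|u\|^2_{\mathring H^{n/2}}$. The sharp Adams-type inequality should read
\[
\log\int e^{n(u-\langle u\rangle)}\le \frac{n}{2(n-1)!\omega_n}\cdot\frac{n}{a_n}\cdot\frac12\int u\mathsf pu+C,
\]
with the normalization of $a_n$ chosen so that the critical threshold is exactly $Q(M)=(n-1)!\omega_n$. If $Q(M)\le0$, the logarithmic term enters $\Pi$ with a favorable sign and $\Pi$ is coercive. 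If $0<Q(M)<(n-1)!\omega_n$, the Adams inequality with a constant slightly above the sharp one still gives coercivity. Weak lower semicontinuity follows from compactness of $H^{n/2}\hookrightarrow$ the Orlicz class $e^{L}$. The direct method then yields a minimizer.

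\emph{Step 3: supercritical noninteger case $k(n-1)!\omega_n<Q(M)<(k+1)(n-1)!\omega_n$.} Here $\Pi$ is unbounded below, and I would follow the Djadli–Malchiodi / Ndiaye min-max scheme:
\begin{enumerate}[(a)]
\item An improved Adams inequality: if $e^{nu}$ has its mass spread over $k+1$ well-separated sets, then the constant improves by a factor $k+1$. Hence low sublevels of $\Pi$ force $e^{nu}/\int e^{nu}$ to concentrate near at most $k$ points, that is, near the formal barycenter space $M_k$.
\item A projection from low sublevels to $M_k$ and a reverse map built from test bubbles; their composition is homotopic to the identity.
\item Since $M_k$ is non-contractible, this gives a min-max class.
\item The Struwe monotonicity trick applied to $\rho\mapsto\Pi_\rho$, producing bounded Palais–Smale sequences for a.e.\ $\rho$.
\item Compactness of solutions for $Q(M)$ away from $\N(n-1)!\omega_n$, a blow-up analysis in which each bubble carries quantized total $Q$-curvature $(n-1)!\omega_n$; this passes to the limit and gives a critical point at the actual value.
\end{enumerate}

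The main obstacle is Step 3, in particular the improved Adams inequality and the quantization in the blow-up analysis for the higher-order operator $\mathsf p$, where maximum principles are unavailable. For the purposes of this paper, where $Q(M)\le0$, only Steps 1–2 are needed. There, Step 2 is straightforward, and in fact strict convexity of $\Pi$ on $\mathring H^{n/2}$ holds when $Q(M)\le0$, since $-\tfrac{Q(M)}n\log\int e^{nu}$ is convex. This already indicates the uniqueness asserted later.
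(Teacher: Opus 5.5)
There is a genuine gap in Step 2, and Step 3 inherits it. You assert that ``kernel triviality of $\mathsf p$ on $\mathring H$ gives coercivity of the quadratic part.'' The theorem, however, only assumes that $\mathsf p$ annihilates nothing but constants, and nondegeneracy is not positivity. The statement allows $\mathsf p$ to have finitely many negative eigenvalues; positivity on $\mathring H$ is the paper's strictly stronger admissibility assumption. For example, for $n=4$ on a compact hyperbolic manifold, $\mathsf p$ is a positive multiple of $(-\Delta)(-\Delta-2)$. This operator is negative on Laplace eigenfunctions with eigenvalue in $(0,2)$, yet its kernel is still just the constants as long as $2$ is not an eigenvalue.

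In that situation $\Pi$ is unbounded below. Take $\mathsf p\psi=\nu\psi$ with $\nu<0$; then $\langle\psi\rangle=0$, and $\log|M|\le\log\int e^{nt\psi}\,d\vol\le n|t|\,\|\psi\|_\infty+\log|M|$. Hence $\Pi(t\psi)\le \frac{\nu t^2}{2a_n}\|\psi\|_{L^2}^2+C(1+|t|)\to-\infty$, for \emph{every} value of $Q(M)$, including $Q(M)\le 0$. So the direct method fails, and your strict-convexity remark fails too.

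The missing piece is exactly what the paper imports by citing Ndiaye, who extends Djadli--Malchiodi. When $\mathsf p$ has $\bar k\ge1$ negative eigenvalues, even the range $Q(M)<(n-1)!\omega_n$ needs a min-max (linking) argument over the negative eigenspace. In the range $k(n-1)!\omega_n<Q(M)<(k+1)(n-1)!\omega_n$, the topological model is not $M_k$ alone. It combines $M_k$ with the unit sphere $S^{\bar k-1}$ of the negative eigenspace, namely the join $M_k\star S^{\bar k-1}$. Correspondingly, the improved Adams inequality and the concentration statement in your Step 3(a) must be applied to the component of $u$ orthogonal to the negative eigenspace. The map from low sublevels to the model space in 3(b) must also keep track of the negative component.

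Your Steps 1--2 are a correct proof under the extra assumption $\mathsf p\ge 0$ on $\mathring H$. That covers the only case the paper actually uses (admissible, $Q(M)\le 0$, the direct-method route of Remark~\ref{rem: liouville}), but not the theorem as stated.

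A smaller slip concerns your Adams constant. For $\mathsf p\ge0$, in the paper's normalization the inequality reads $\log\int e^{n(u-\langle u\rangle)}\,d\vol\le \frac{n}{2(n-1)!\,\omega_n\,a_n}\int u\,\mathsf p u\,d\vol+C$. Your displayed constant is larger by a factor $n/2$. For $n\ge 4$ it would put the coercivity threshold at $2(n-1)!\omega_n/n$ rather than $(n-1)!\omega_n$.
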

\begin{proof}
	This follows from a combination of Theorem 1.1 and Corollary 1.4(b) in \cite{Ndiaye}.
\end{proof}

\begin{remark}\label{rem: liouville}
	In \cite{Brendle03} equation \eqref{eq: liouville} has been solved under the condition that $\mathsf p$ is a non-negative operator and $Q(M)<(n-1)!\omega_n$ using a geometric flow. On the other hand, \cite{Ndiaye} shows that under these conditions a Moser-Trudinger type inequality holds, which implies that the functional $\Pi$ is bounded from below, coercive and lower semicontinuous. Hence solutions can be found as global minima using the direct method of Calculus of Variations. This strategy generalizes the four-dimensional approach in \cite{ChangYang95} towards higher even dimensions.  
\end{remark}
It turns out that in our setting solutions $u$ to \eqref{eq: liouville} are unique up to constants.
\begin{corollary}\label{cor: liouville}
	Let $(M,g)$ be an admissible manifold with $Q(M)\leq0$. Then  for every $\kappa\in\R$ there exists a unique
	smooth function $u=u_\kappa\in C^\infty(M)$  with $\langle u\rangle =\kappa$ such that the $Q$-curvature of the conformal metric $\hat g=e^{2u}g$ is constant.
	Indeed, the $Q$-curvature will then be $-\Lambda_\kappa:=\frac{Q(M)}{\int e^{nu_\kappa}\, d\vol}$.
\end{corollary}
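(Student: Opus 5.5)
Existence will come from Theorem \ref{thm: liouville}, and uniqueness from a maximum-principle-free convexity argument that uses admissibility. Since $Q(M)\le 0$, we have $Q(M)\neq k(n-1)!\omega_n$ for all $k\ge1$. Admissibility means $\mathsf p>0$ on $\mathring H$, and $\mathsf p$ annihilates constants, so $\ker\mathsf p$ consists exactly of the constants. Theorem \ref{thm: liouville} then gives a smooth $u$ and a constant $\Lambda$ solving \eqref{eq: liouville}. Integrating \eqref{eq: liouville} and using self-adjointness ($\int\mathsf pu\,d\vol=0$) forces $\Lambda\int e^{nu}\,d\vol=-Q(M)$, so $\Lambda=-Q(M)/\int e^{nu}\,d\vol\ge0$.

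To prescribe the mean, I will use that constants lie in $\ker\mathsf p$. If $u$ solves $\frac1{a_n}\mathsf pu+Q+\Lambda e^{nu}=0$, then $u+c$ solves the same equation with $\Lambda$ replaced by $\Lambda e^{-nc}$. So $u_\kappa:=u-\langle u\rangle+\kappa$ again gives a metric of constant $Q$-curvature, now with $\langle u_\kappa\rangle=\kappa$. Its curvature value is $-\Lambda_\kappa=Q(M)/\int e^{nu_\kappa}\,d\vol$, again by integrating the equation.

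For uniqueness, let $u_1,u_2$ be smooth with $\langle u_1\rangle=\langle u_2\rangle=\kappa$, each solving \eqref{eq: liouville} with constants $\Lambda_1,\Lambda_2\ge0$. The case $Q(M)=0$ is easy. Then $\Lambda_i=0$, so $\mathsf p(u_1-u_2)=0$. Since $u_1-u_2\in\mathring H$, admissibility gives $u_1=u_2$.

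For $Q(M)<0$, I first remove the dependence on the constants $\Lambda_i$. Writing $\Lambda_i=e^{-nc_i}$, which is possible since $\Lambda_i>0$, set $v_i:=u_i-c_i$. Each $v_i$ then solves $\frac1{a_n}\mathsf pv+Q+e^{nv}=0$ with the fixed constant $1$. Subtracting the two equations, testing against $w:=v_1-v_2$ and integrating gives
\[
\frac1{a_n}\mathfrak p(w,w)+\int (e^{nv_1}-e^{nv_2})(v_1-v_2)\,d\vol=0.
\]
Both terms are nonnegative: the first because $\mathsf p\ge0$, the second because $t\mapsto e^{nt}$ is monotone. Hence the second integrand vanishes pointwise, so $v_1=v_2$. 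Then $u_1-u_2=c_1-c_2$ is constant, and equal means force $u_1=u_2$.

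Equivalently, one can phrase this as strict convexity of $v\mapsto\int\frac1{2a_n}v\mathsf pv+Qv+\frac1ne^{nv}$, whose critical points are exactly these $v$. The main subtlety is this normalization step. The equation with the free constant $\Lambda=-Q(M)/\int e^{nu}$ is not monotone in $u$ as written, and it only becomes monotone after the constant is absorbed into a shift, which is allowed because $\mathsf p$ kills constants. Smoothness of $u$ comes directly from Theorem \ref{thm: liouville} and is preserved under adding constants.
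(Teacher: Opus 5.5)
Your argument is correct. Existence is obtained as in the paper, but your uniqueness proof uses a different mechanism.

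The paper works with the functional $\Pi$ of Theorem~\ref{thm: liouville}, which is invariant under adding constants. Using H\"older's inequality together with $Q(M)\le 0$, it gets $\Pi(\frac{u+v}{2})\le\frac12\Pi(u)+\frac12\Pi(v)-\frac{1}{2a_n}\mathfrak p(\frac{u-v}{2},\frac{u-v}{2})$. If $u$ and $v$ are both global minimizers, the defect term must vanish, and admissibility then makes $u-v$ constant. The term $-\frac{Q(M)}{n}\log\int e^{nu}\,d\vol$ already encodes the free constant $\Lambda$, so this treats $Q(M)=0$ and $Q(M)<0$ simultaneously. It does tacitly require that every solution is a global minimizer of $\Pi$. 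That is true, because the same estimate shows $\Pi$ is convex and solutions are exactly its critical points, but the paper does not say so.

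You instead absorb $\Lambda$ into a constant shift, which is possible only for $Q(M)<0$. You then run a monotonicity argument on the normalized equation $\frac{1}{a_n}\mathsf p v+Q+e^{nv}=0$, i.e.\ you exploit the strict convexity of the normalized functional rather than the mere convexity of $\Pi$. This applies directly to arbitrary smooth solutions with no variational input. For $Q(M)<0$ your uniqueness step needs only $\mathsf p\ge 0$; strict positivity of $\mathsf p$ on $\mathring H$ enters only in your case $Q(M)=0$. The price is the case distinction, which the paper's functional avoids.
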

Thus in particular, $u_\kappa=u_0+\kappa$ and
$\Lambda_\kappa=e^{-n\kappa}\Lambda_0$.

\begin{proof}
	According to Theorem \ref{thm: liouville} and Remark \ref{rem: liouville} we know that smooth solutions to \eqref{eq: liouville} exist, which are global minimizers of $\Pi$. Furthermore, if $u$ is a global minimizer of $\Pi$, then so is $u+c$ for every constant $c\in\R$.
	
	In order to show uniqueness let $u,v$ be two solutions. Then 
	\begin{align*}
		\Pi\left(\frac{u+v}2\right)=&\frac1{4a_n} \mathfrak p(u)+\frac1{4a_n}\mathfrak p(v)-\frac1{2a_n}\mathfrak p\left(\frac{u-v}2\right)\\
		&+\frac12\int Q u\, d\vol+\frac12\int Q v\, d\vol\\
		&-\frac{Q(M)}n\log\int e^{nu/2}e^{nv/2}\, d\vol\\
		\leq & \frac1{4a_n} \mathfrak p(u)+\frac1{4a_n}\mathfrak p(v)-\frac1{2a_n}\mathfrak p\left(\frac{u-v}2\right)\\
		&+\frac12\int Q u\, d\vol+\frac12\int Q v\, d\vol\\
		&-\frac{Q(M)}{2n}\log\int e^{nu}\, d\vol -\frac{Q(M)}{2n}\log \int e^{nv}\, d\vol\\
		=&\frac12\Pi(u)+\frac12\Pi(v)-\frac1{2a_n}\mathfrak p\left(\frac{u-v}2\right),
	\end{align*}
	where we used H\"older's inequality and the assumption that $Q(M)\leq 0$. Then, since $u$ and $v$ are minimizers we have that $\mathfrak p(\frac{u-v}2)=0$ and by admissibility
	$u=v+c$ for some constant $c$. Prescribing $\langle u\rangle=\kappa$ guarantees uniqueness. For $\Lambda=\Lambda_u$ we obtain
	\begin{align*}
		\Lambda=-Q(M)\Big/\int e^{nu}\, d\vol.
	\end{align*}
\end{proof}

In the case $Q(M)<0$, this existence and uniqueness result can be equivalently re-phrased as follows: for every $\Lambda>0$ there exists  a unique 
smooth function $u=u^{(\Lambda)}\in C^\infty(M)$ such that 
\begin{align*}
	e^{2u}g \text{ has constant $Q$-curvature  }
	-\Lambda.
\end{align*}
Indeed, choosing $\kappa\in\R$ such that $\Lambda=\Lambda_\kappa$ we have $u^{(\Lambda)}=u_\kappa$.

We conclude this section by collecting a few examples of admissible manifolds with non-positive total $Q$-curvature. 

\begin{example}\cite[Example 6.16--6.18]{Del24}
	\begin{enumerate}[$(i)$]
		\item ($n=2$). Every compact Riemannian surface of genus $\geq 1$ satisfies both of these conditions.
		\item ($n=2,6,10,\ldots$). Every compact hyperbolic Riemmanian manifold of dimension $n=4\ell-2$ for some ${\ell}\in \N$ and with $\lambda>\frac{n(n-2)}4$ satisfies both of these conditions.
		\item ($n=4$). Let $M=M_1\times M_2$, where $M_1$ and $M_2$ are compact Riemannian surfaces of constant curvature $k_1$ and $k_2$, respectively. Then
		\begin{enumerate}[$(i)$]
			\item $Q<0$ if and only if 
			\begin{align*}
				|k_1+k_2|<\sqrt{3} |k_1-k_2|,
			\end{align*}
			\item $\mathsf p>0$ on $\mathring{H}$ if $k_1+k_2\geq0$. 
		\end{enumerate}
	\end{enumerate}
\end{example}

\section{The augmented field $h$}\label{sec: aug}
In the following we introduce the \emph{augmented field} $h$. For this recall that $ g_0(x,y)$ defines the kernel of the integral operator $\sqrt{\sf k_0}$. According to Lemma  \ref{est-sqrt-k}, we can choose a constant $\sigma\ge0$ such that for all $x,y\in M$
\begin{align*}
	{g}(x,y):=g_0(x,y)+\frac1{\sqrt{|M|}}\sigma\geq0.
\end{align*}
Formally, the field $h$ is defined by 
$$h(x)= \int_{M}{ g}(x,y)d W(y),$$
where $W$ is white noise,  i.e.\ the $ H^{-n/2-\varepsilon}$-valued centered Gaussian random field with covariance
\begin{equation*}
	\E \big[\langle{W}|{\phi}\rangle \langle{W}|{\eta}\rangle\big]=\langle{\phi}|{\eta}\rangle_{ L^2}   \quad\qquad \forall \phi,\eta\in  L^{2} \fstop
\end{equation*}

The covariance function of $h$ is then formally given by 
$$k(x,y):=\E[h(x)h(y)]=k_0(x,y)+\sigma^2$$
and notably
\begin{align}\label{eq: con}
	k(x,y)=\int g(x,z)g(z,y)\, d\vol(y)\geq0.
\end{align}
More precisely, we define the following.
\begin{definition} The \emph{augmented (co-polyharmonic) field} $h$ is a centered Gaussian random variable taking values in $ H^{-s}$ for some $s>0$ with covariance
	\begin{equation*}
		\E  \Big[\langle h|\phi\rangle\cdot \langle h|\eta\rangle\Big]=\mathcal K(\phi,\eta)
		\quad\qquad \forall u,v\in  H^{s},
	\end{equation*}
	where
	\begin{equation*}
		\mathcal K(\phi,\eta)=\iint k(x,y)\, \phi(x)\, \eta(y)\, d\vol(x)\, d\vol(y).
	\end{equation*}
\end{definition}

For every admissible manifold, such a field exists and is unique in law by the same reasoning as for the co-polyharmonic field $h_0$ introduced in \cite{Del24}, see \cite[Theorem 3.2]{Del24}.
The covariance kernel of the centered Gaussian field $h_0$ is given by $k_0$, i.e. formally
\[
k_0(x,y)={\E}\big[h_0(x)\,h_0(y)\big].
\]
The relationship between $h$ and $h_0$ is given in the following lemma.

\begin{lemma}\label{lem: h and h*}
	The fields \(h_0\) and \(h\) are related by
	\[
	\scalar{h}{\phi}
	\overset{d}{=}
	\scalar{h_0}{\phi}
	+
	\sigma\scalar{\phi}{1} \xi,
	\]
	where $\xi$ is a standard normal random variable independent of $h_0$.	
\end{lemma}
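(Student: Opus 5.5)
Both sides are centered Gaussian fields indexed by test functions $\phi$, so it suffices to show that they have the same covariance. A centered Gaussian process indexed by $H^{s}$ (or $\mathfrak D$) is determined in law by its covariance functional, which is the uniqueness statement referenced after the definition of $h$, cf.~\cite[Theorem 3.2]{Del24}.

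First, I would check that the right-hand side is a centered Gaussian family. The map $\phi\mapsto\scalar{h_0}{\phi}+\sigma\scalar{\phi}{1}\xi$ is linear in $\phi$. Because $h_0$ and $\xi$ are independent and jointly Gaussian, every finite collection $\big(\scalar{h_0}{\phi_i}+\sigma\scalar{\phi_i}{1}\xi\big)_{i}$ is jointly centered Gaussian. It also takes values in the same negative Sobolev space as $h_0$, since adding the constant function $\sigma\xi$ costs no regularity.

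Next, I would compute its covariance. Independence kills the cross terms, and $\E\xi^2=1$, so
\begin{align*}
\E\Big[\big(\scalar{h_0}{\phi}+\sigma\scalar{\phi}{1}\xi\big)\big(\scalar{h_0}{\eta}+\sigma\scalar{\eta}{1}\xi\big)\Big]
=\mathcal K_0(\phi,\eta)+\sigma^2\scalar{\phi}{1}\scalar{\eta}{1}.
\end{align*}
Here $\mathcal K_0$ is understood as the extension to ungrounded arguments given by the kernel $k_0$. Writing $\scalar{\phi}{1}\scalar{\eta}{1}=\iint\phi(x)\eta(y)\,d\vol(x)\,d\vol(y)$ shows that the right-hand side equals $\iint\big(k_0(x,y)+\sigma^2\big)\phi(x)\eta(y)\,d\vol\,d\vol=\mathcal K(\phi,\eta)$. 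This is precisely the covariance of $h$, so the finite-dimensional distributions coincide, and hence the laws agree on $H^{-s}$.

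The only delicate point is the justification in the covariance step. One must confirm that the covariance of $h_0$ on ungrounded test functions is given by the same double integral of $k_0$, with no extra constant. This holds because $k_0$ annihilates constants, so that $\mathcal K_0(\phi,\eta)=\mathcal K_0(\pi\phi,\pi\eta)$, which agrees with the definition of $h_0$ as the field tested against $\pi\phi$. With this observation, the two covariance forms match exactly and the remaining steps are routine.
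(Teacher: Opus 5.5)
Your proposal is correct and is essentially the paper's argument. The paper notes that both sides are centered Gaussian and checks that $\E[\scalar{h}{\phi}^2]=\iint k_0(x,y)\phi(x)\phi(y)\,d\vol(x)\,d\vol(y)+\sigma^2\scalar{\phi}{1}^2$ equals the variance of $\scalar{h_0}{\phi}+\sigma\scalar{\phi}{1}\xi$. You match the full bilinear covariance instead, which is equivalent by polarization. You also justify, via $k_0$ annihilating constants, that the ungrounded double integral is the covariance of $h_0$, a point the paper takes for granted.
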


\begin{proof}
	By definition,
	\[
	\E[\scalar{h}{\phi}^2]=\iint \phi(x)\phi(y)k_0(x,y)\, d\vol(x)\, d\vol(y)+\sigma^2\left(\int\phi(x)\, d\vol(x)\right)^2.
	\]
	which is the same as the variance of $\scalar{h}{\phi}
	+
	\sigma \scalar{\phi}{1} \xi$.
\end{proof}	

If $\mathring \P$ and $\P$ denote the laws of $h_0$ and $h$, resp., then $\P$ is the image of 
$\mathring\P\otimes \mathcal N(0,1)$ under the map $(h_0,\xi) \mapsto h_0+\sigma\xi$, where $\xi$ is identified with the constant function.

In particular we have by Proposition 3.9 in \cite{Del24} that the series
\begin{align*}
	h=\sum_{j=1}^\infty \frac{\psi_j\xi_j}{\sqrt{\nu_j}}+\sigma\xi,
\end{align*}
where $(\xi_j)_{j=1}^\infty$ and $\xi$ are independent standard normal random variables,
exists $\P$-almost-surely in $H^{-\eps}$.

Further, we have according to \cite[Proposition 3.7]{Del24} a change of variable formula of Girsanov type. For this let us denote 
\begin{align*}
	&\tilde {\mathsf p}\varphi=\mathsf p\varphi+\frac1{\sigma^2|M|}\langle \varphi\rangle, \\
	&\tilde{\mathfrak p}(\varphi,\varphi)=\int \varphi\, \tilde{\mathsf p}\varphi\, d\vol=\mathfrak p(\varphi,\varphi)+\frac1{\sigma^2}\langle \varphi\rangle^2.
\end{align*}
Note that if $\varphi\in\mathring H^{n/2}$, $\tilde{\mathsf p}=\mathsf p$ and $\tilde{\mathfrak p}(\varphi,\varphi)={\mathfrak p}(\varphi,\varphi)$.
\begin{lemma}
	If $\varphi\in H^{n/2}$ and $h\sim \mathbb P$, 
	then $h+\varphi$ is distributed according to 
	\begin{equation}\label{eq: girsanov0}
		\exp\paren{\scalar{h}{\tilde {\mathsf p}\varphi}-\frac12 \tilde{\mathfrak p}(\varphi,\varphi) } \, d\mathbb P(h).
	\end{equation}
\end{lemma}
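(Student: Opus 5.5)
The plan is to reduce the statement to the Cameron--Martin theorem for the two independent components of $h$, using the decomposition of Lemma \ref{lem: h and h*}. Write $h=h_0+\sigma\xi$ and $\varphi=\mathring\varphi+c$, where $c=\langle\varphi\rangle$ and $\mathring\varphi=\pi(\varphi)\in\mathring H^{n/2}$. Then $h+\varphi=(h_0+\mathring\varphi)+\sigma(\xi+c/\sigma)$. The two shifts act on independent coordinates, so the Radon--Nikodym density of the law of $h+\varphi$ with respect to $\P$ is the product of the density for the grounded part and the density for the one-dimensional Gaussian part.

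For the grounded part, the Cameron--Martin space of $h_0$ is $\mathring H^{n/2}$ with norm $\mathfrak p(\cdot,\cdot)$, since the covariance operator is $\mathsf k_0=\mathsf p^{-1}$ on $\mathring H$. Hence the law of $h_0+\mathring\varphi$ has density $\exp\big(\scalar{h_0}{\mathsf p\mathring\varphi}-\frac12\mathfrak p(\mathring\varphi,\mathring\varphi)\big)$ with respect to $\mathring\P$. Here $\scalar{h_0}{\mathsf p\mathring\varphi}$ is understood as the $L^2(\mathring\P)$-limit of $\scalar{h_0}{\mathsf p\varphi_k}$ for smooth $\varphi_k\to\mathring\varphi$ in $H^{n/2}$. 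This limit exists because $\E\scalar{h_0}{\mathsf p\eta}^2=\mathfrak p(\eta,\eta)$. Alternatively, one can invoke \cite[Proposition 3.7]{Del24} directly. In the eigenbasis this is just the product of one-dimensional shifts of $\xi_j$ by $\sqrt{\nu_j}\scalar{\mathring\varphi}{\psi_j}$, with $\sum_j\nu_j\scalar{\mathring\varphi}{\psi_j}^2=\mathfrak p(\mathring\varphi,\mathring\varphi)<\infty$. For the constant part, shifting $\xi\sim\mathcal N(0,1)$ by $c/\sigma$ gives the density $\exp\big(\xi c/\sigma-c^2/(2\sigma^2)\big)$.

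The remaining step is to rewrite the product of the two densities in terms of $h$. Since $\mathsf p$ annihilates constants and is self-adjoint, $\mathsf p\varphi=\mathsf p\mathring\varphi$ has mean zero. Therefore $\scalar{h}{\mathsf p\varphi}=\scalar{h_0}{\mathsf p\mathring\varphi}$, because the constant $\sigma\xi$ pairs to zero. We also have $\langle h\rangle=\sigma\xi$, which holds because $h_0$ is grounded, i.e. $\scalar{h_0}{1}=0$ almost surely, as its variance $\mathcal K_0(1,1)=0$. Hence $\xi c/\sigma=\langle h\rangle\langle\varphi\rangle/\sigma^2=\scalar{h}{\frac{1}{\sigma^2|M|}\langle\varphi\rangle}$. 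Adding the two exponents gives exactly $\scalar{h}{\tilde{\mathsf p}\varphi}$. Likewise $\mathfrak p(\mathring\varphi,\mathring\varphi)+c^2/\sigma^2=\tilde{\mathfrak p}(\varphi,\varphi)$, which yields \eqref{eq: girsanov0}.

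The only genuinely delicate point is the meaning of $\scalar{h}{\tilde{\mathsf p}\varphi}$ when $\varphi\in H^{n/2}$ is merely such that $\mathsf p\varphi\in H^{-n/2}$, so the pairing is not classically defined for $h\in H^{-\eps}$. I would handle this as described above: define the pairing as an $L^2(\P)$ limit (a Paley--Wiener integral), check the formula first for smooth $\varphi$, and then pass to the limit. The exponential moments converge because all the variables involved are Gaussian with converging variances.
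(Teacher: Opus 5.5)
Your proposal is correct and follows essentially the same route as the paper: realize $h=h_0+\sigma\xi$, apply the Cameron--Martin theorem \cite[Proposition 3.7]{Del24} to the grounded shift $\pi(\varphi)$ and a one-dimensional Gaussian shift of $\xi$ by $\langle\varphi\rangle/\sigma$, then recombine the exponents using that $\mathsf p$ is self-adjoint and annihilates constants. You also make explicit two points the paper leaves implicit: that $\xi=\langle h\rangle/\sigma$ because $h_0$ is grounded, and that $\scalar{h}{\mathsf p\varphi}$ should be read as an $L^2(\P)$ limit.
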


\begin{proof}
	
	Let $f$ be a bounded measurable test function on $H^{-s}$. By Lemma \ref{lem: h and h*}, we may realize 
	\begin{align*}
		h=T(h_0,\xi)=h_0+\sigma\xi,
	\end{align*}
	where $h_0\sim \mathring \P$ and $\xi\sim \mathcal N(0,1)$ is independent of $h_0$.
	By the  Cameron-Martin theorem \cite[Proposition 3.7]{Del24} we have that $h_0+\mathring\varphi$ is distributed according to
	\begin{align*}
		\exp(\scalar{h_0}{\mathsf p\mathring\varphi}-\frac12\mathfrak p(\mathring\varphi,\mathring\varphi))\, d\mathring\P(h_0)
	\end{align*}
	for every $\mathring\varphi\in\mathring H^{n/2}$.
	Then by definition of the pushforward
	\begin{align*}
		&\int f\, d(h+\varphi)_\#\P=\int f(h+\varphi)\, d\P\\
		=&\int f(T(h_0,\xi)+\pi(\varphi)+\langle\varphi\rangle)\, d\mathring\P(h_0)\, d\mathcal N(\xi)\\
		=&\int f(T(h_0,\xi) + \langle \varphi\rangle)\exp(\scalar{h_0}{\mathsf p\varphi}-\frac12\mathfrak p(\varphi,\varphi))\, d\mathring\P(h_0)\, d\mathcal N(\xi)\\
		=&\int f(T(h_0,\xi))\exp(\scalar{h_0}{\mathsf p\varphi}-\frac12\mathfrak p(\varphi,\varphi))\exp\left(\frac{\xi\langle\varphi\rangle}{\sigma}-\frac{\langle\varphi\rangle^2}{2\sigma^2}\right)\, d\mathring\P(h_0)\, d\mathcal N(\xi)\\
		=&\int f(h)\exp\paren{\scalar{h}{\mathsf p \varphi+\frac1{\sigma^2|M|}\langle\varphi\rangle}-\frac12 (\mathfrak p(\varphi,\varphi)+\frac{\langle\varphi\rangle^2}{\sigma^2})}\, d\P(h),
	\end{align*}
	where we used in the last line that 
	$\mathsf p$ is a self-adjoint operator, which annihilates constants.
	
\end{proof}

In the following we introduce another approximation $h_{\ell}$ of the field $h$, the so-called \emph{white noise truncation}.

\subsection{White noise truncation}\label{sec: wn trunc}

\begin{definition}
	The \emph{white noise truncation} $h_{\ell}$ of the augmented field $h$ is defined by the centered Gaussian field
	\begin{align}\label{eq: white noise trunc}
		h_{\ell}(x) := & \int_{M\setminus B_{1/\ell}(x)}{ g}(x,y)d W(y), \quad x \in M.
	\end{align}	
	The correlation function is given by 
	\begin{align}\label{eq: kernel wn decomp}
		k_{\ell}(x,y):=\E[h_{\ell}(x)h_{\ell}(y)]=\int_{M\setminus(B_{1/\ell}(x)\cup B_{1/\ell}(y))}{ g}(x,z) { g}(y,z)\,d\vol(z)\ge0.
	\end{align}
\end{definition}

\begin{lemma}
	Let  $(h_{\ell})_{\ell\in\N}$ be
	the white noise truncation  given in \eqref{eq: white noise trunc}. The associated covariance kernels $(k_{\ell})_{\ell\in\N}$ satisfy
	\begin{enumerate}[$(i)$]
		\item as ${\ell}\to\infty$, 
		\begin{align}\label{eq: convergence}
			k_{\ell}\to k \quad\text{ in }L^1(M^2,\vol\otimes \vol),
		\end{align}
		\item there exist $C\geq0$ such that for all ${\ell}\in \N$ and all $x,y\in M$,
		\begin{align}\label{eq: domination}
			k_{\ell}(x,y)\leq C\log\left(\frac1{d(x,y)}\wedge \ell\right)+C,
		\end{align}
		\item for all ${\ell}\in\N$ there exists a constant $C_{\ell}>0$ such that
		\begin{align}\label{eq: k cont}
			|k_{\ell}(x,x)+k_{\ell}(y,y)-2k_{\ell}(x,y)|\leq C_{\ell} d(x,y), \quad x,y\in M.
		\end{align}
	\end{enumerate}
\end{lemma}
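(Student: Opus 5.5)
All three properties will be derived from the explicit formula \eqref{eq: kernel wn decomp}, $k_\ell(x,y)=\int_{M\setminus(B_{1/\ell}(x)\cup B_{1/\ell}(y))} g(x,z)g(y,z)\,d\vol(z)$, together with $g\ge0$ and the bound $g(x,z)\le C+C_1 d(x,z)^{-n/2}$ from Lemma \ref{est-sqrt-k}. We also use \eqref{eq: con}, i.e. $k(x,y)=\int_M g(x,z)g(y,z)\,d\vol(z)$.

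\emph{Step (i).} Because $g\ge0$, the integrand is nonnegative. As $\ell\to\infty$ the domain of integration increases to $M\setminus\{x,y\}$, so $k_\ell(x,y)\uparrow k(x,y)$ for every $x\neq y$ by monotone convergence. The logarithmic bound \eqref{eq: log} shows $k\in L^1(M^2)$. Since $0\le k_\ell\le k$, dominated convergence then gives $k_\ell\to k$ in $L^1(M^2)$.

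\emph{Step (ii).} Set $r=d(x,y)$. If $r\le 2/\ell$, apply Cauchy--Schwarz: $k_\ell(x,y)\le \big(\int_{d(x,z)\ge1/\ell}g(x,z)^2\big)^{1/2}\big(\int_{d(y,z)\ge1/\ell}g(y,z)^2\big)^{1/2}$. Each factor is at most $C\log \ell+C$, since $\int_{d\ge 1/\ell} d^{-n}\,d\vol\lesssim \log\ell$ on a compact manifold. This yields $C\log\ell+C\le C\log(\frac1r\wedge\ell)+C'$ in this regime. If $r>2/\ell$, simply use $k_\ell\le k\le \log\frac1r+C$ by \eqref{eq: log}, and note $\log\frac1r\le\log(\frac1r\wedge \ell)+\log 2$ there.

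\emph{Step (iii).} Write $G_x(z)=g(x,z)1_{d(x,z)\ge1/\ell}$, so that $h_\ell(x)=\int G_x\,dW$. Then
$$k_\ell(x,x)+k_\ell(y,y)-2k_\ell(x,y)=\|G_x-G_y\|_{L^2}^2,$$
and it suffices to prove $\|G_x-G_y\|_{L^2}^2\le C_\ell\, d(x,y)$. Split $M$ into three pieces: the symmetric difference of $M\setminus B_{1/\ell}(x)$ and $M\setminus B_{1/\ell}(y)$; the common region $\{d(x,z),d(y,z)\ge 1/\ell\}$; and the rest, where both $G$ vanish.
\begin{itemize}
\item[] On the symmetric difference, $g$ is bounded by $C+C_1\ell^{n/2}$. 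The volume of this annular region is $O(\ell^{n-1}d(x,y))$ for $d(x,y)$ small.
\item[] On the common region we need $|g(x,z)-g(y,z)|\le C_\ell\, d(x,y)^{1/2}$ for $z$ at distance at least $1/\ell$ from both points. This uses that $g_0$ is smooth away from the diagonal, being the kernel of the pseudodifferential operator $\sqrt{\mathsf k_0}$. Away from the diagonal its gradient in the first variable is then bounded by a constant depending on $\ell$, which even gives a Lipschitz bound.
\end{itemize}
The main obstacle is justifying this off-diagonal regularity of $g_0$, since Lemma \ref{est-sqrt-k} only gives size bounds. I would argue it via the spectral/heat-kernel representation: the kernel of $\mathsf p^{-1/2}$ on $\mathring H$ is a classical pseudodifferential kernel of order $-n/2$, and such kernels are smooth off the diagonal with derivative bounds of order $d^{-n/2-1}$. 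If this cannot be cited cleanly, a fallback is the weaker $L^2$-continuity $x\mapsto g(x,\cdot)1_{\{\cdot\}}$ in $L^2$. That would give only a modulus of continuity, not the linear bound; I would accept this only if \eqref{eq: k cont} is needed merely for a Kolmogorov-type continuity argument.
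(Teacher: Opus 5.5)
Your proposal is correct and follows the paper's proof essentially step for step. For (i) you use monotone convergence with $g\ge0$, and for (ii) the two bounds $k_\ell\le k\le\log\frac1{d(x,y)}+C$ and $k_\ell\le C\log\ell+C$ from Cauchy--Schwarz. For (iii) you use the same split of $\|G_x-G_y\|_{L^2}^2$ into the symmetric difference of the excised balls (size bound on $g$) and the common region (gradient bound on $g$); your cruder $\ell$-dependence there is harmless, and the case $d(x,y)\ge 1/(2\ell)$ follows trivially from (ii).

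You are also right that the off-diagonal gradient bound does not follow from the size estimate \eqref{eq: green estimate}, although the paper's Claim~\ref{claim: second bound} asserts that it does. Your justification via Seeley's theorem is the correct repair: write $\sqrt{\mathsf k_0}=(\mathsf p+\Pi_0)^{-1/2}-\Pi_0$ with $\Pi_0\phi=\langle\phi\rangle$, which is a classical pseudodifferential operator of order $-n/2$ up to a smoothing term, so its kernel is smooth off the diagonal. For fixed $\ell$ that qualitative smoothness is all you need, so your fallback is unnecessary.
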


\begin{proof}
	$(i)$ Since 
	\begin{align*}
		k_\ell(x,y)\nearrow
		\int_{M}{ g}(x,z) { g}(y,z)\,d\vol(z)=k(x,y),
	\end{align*}
	by \eqref{eq: con}, the claim follows by monotone convergence.
	
	$(ii)$ By \eqref{eq: log} there exists a $C>0$ such that for all $x,y\in M$
	\begin{align*}
		k_{\ell}(x,y)\leq \log\frac1{d(x,y)} +C.
	\end{align*}
	Moreover,  there exist constants 
	$C,C'$
	such that $\forall x,y,\ell$
	\begin{align*}
		k_\ell(x,y)\le C+C'\log\ell.
	\end{align*}
	Indeed, by \eqref{eq: green estimate},
	\begin{align*}
		k_\ell(x,y)
		&\le\bigg(\int_{M\setminus B_{1/\ell}(x)} { g}^2(x,z)\,d\vol(z) \bigg)^{1/2}\cdot
		\bigg(\int_{M\setminus B_{1/\ell}(y)} { g}^2(y,z)\,d\vol(z) \bigg)^{1/2}\\
		&\le\bigg(\int_{M\setminus B_{1/\ell}(x)} \Big(C_0+C_1 \frac1{d(x,z)^{n/2}} \Big)^2\,d\vol(z)\bigg)^{1/2}\\
		&\qquad\cdot
		\bigg(\int_{M\setminus B_{1/\ell}(y)} \Big(C_0+C_1 \frac1{d(y,z)^{n/2}}\Big)^2\,d\vol(z) \bigg)^{1/2}\\
		&\le C+C'\int_{1/\ell}^\infty \frac1{r^n}r^{n-1}dr=C+C'\log\ell.
	\end{align*}
	Thus, \eqref{eq: domination} follows.
	
	$(iii)$
	We decompose $k_{\ell}(x,x)+k_{\ell}(y,y)-2k_{\ell}(x,y)$ using \eqref{eq: kernel wn decomp} and obtain
	\begin{align*}
		&|k_{\ell}(x,x)+k_{\ell}(y,y)-2k_{\ell}(x,y)|\\
		\leq&\int_{B_{1/\ell}(y)\setminus B_{1/\ell}(x)}g^2(x,z)\, d\vol(z) +\int _{B_{1/\ell}(x)\setminus B_{1/\ell}(y)}g^2(y,z)\, d\vol(z)\\
		&+\int_{M\setminus B_{1/\ell}(x)\setminus B_{1/\ell}(y)}|g(x,z)-g(y,z)|^2\, d\vol(z).
	\end{align*}
	From now on we write $\eps=1/\ell$ and we treat the first two terms and the last term separately.
	\begin{claim}\label{claim: first bound}
		\begin{align*}
			\int_{B_\eps(y)\setminus B_\eps(x)}g^2(x,z)\, d\vol(z) +\int _{B_\eps(x)\setminus B_\eps(y)}g^2(y,z)\, d\vol(z)\leq C \frac{d(x,y)}\eps.
		\end{align*}
	\end{claim}
	\begin{proof}[Proof of Claim \ref{claim: first bound}]
		If $d(x,y)>2\eps$ we get by \eqref{eq: green estimate}
		\begin{align*}
			&\int_{B_\eps(y)\setminus B_\eps(x)}g^2(x,z)\, d\vol(z) +\int _{B_\eps(x)\setminus B_\eps(y)}g^2(y,z)\, d\vol(z)\\
			\leq &\int_{B_\eps(y)}g^2(x,z)\, d\vol(z) +\int _{B_\eps(x)}g^2(y,z)\, d\vol(z)\\
			\leq &C\eps^nd(x,y)^{-n}\leq C\frac{d(x,y)}\eps.
		\end{align*}
		If $d(x,y)\leq 2\eps$, then again by \eqref{eq: green estimate}
		\begin{align*}
			&\int_{B_\eps(y)\setminus B_\eps(x)}g^2(x,z)\, d\vol(z) +\int _{B_\eps(x)\setminus B_\eps(y)}g^2(y,z)\, d\vol(z)\\
			\leq &C\int_{B_\eps(y)\setminus B_\eps(x)}d(x,z)^{-n}\, d\vol(z) +C\int _{B_\eps(x)\setminus B_\eps(y)}d(y,z)^{-n}\, d\vol(z)\\
			\leq &C\int_{B_\eps(y)\setminus B_\eps(x)}\eps^{-n}\, d\vol(z) +C\int _{B_\eps(x)\setminus B_\eps(y)}\eps^{-n}\, d\vol(z)\\
			\leq &C\eps^{-n}\eps^{n-1}d(x,y)
			\leq C\frac{d(x,y)}\eps,
		\end{align*}
		which proves the claim.
	\end{proof}

	\begin{claim}\label{claim: second bound}
		\begin{align*}
			\int_{M\setminus B_\eps(x)\setminus B_\eps(y)}|g(x,z)-g(y,z)|^2\, d\vol(z)\leq C\frac{d^2(x,y)}{\eps^2}.
		\end{align*}
	\end{claim}
	\begin{proof}[Proof of Claim \ref{claim: second bound}]
		In order to show the estimate, we can find a path $p$ from $x$ to $y$ and a constant $C=C(M,g)>0$ such that the length $L(p)$ is bounded by $Cd(x,y)$ and for all $z$
		\begin{align*}
			\mathrm{dist}(p,z)\geq \frac1C(d(x,z)\wedge d(y,z)).
		\end{align*}
		This follows from standard bounded-geometry arguments on compact manifolds: Each minimizing geodesic can be modified to avoid $B(z,r)$, where $r\sim d(x,z)\wedge d(y,z)$, with uniformly controlled length, see e.g. \cite{petersen2006riemannian} and \cite{burago2001course}.
		
		Then, applying the mean value theorem on the path $p$, there exists a $\zeta$ depending on $x,y,z$ such that
		\begin{align*}
			&\int_{M\setminus B_\eps(x)\setminus B_\eps(y)}|g(x,z)-g(y,z)|^2\, d\vol(z)\\
			&\leq C\int_{M\setminus B_\eps(x)\setminus B_\eps(y)}|Dg(\zeta,z)|^2 d(x,y)^2\, d\vol(z).
		\end{align*}
		Note that due to \eqref{eq: green estimate}, we have 
		\begin{align*}
			|D g|(x,y)\leq Cd(x,y)^{-n/2-1}(x,y).
		\end{align*}
		Then, using that $d(\zeta,z)\geq d(x,y)\wedge d(y,z)$
		\begin{align*}
			&\int_{M\setminus B_\eps(x)\setminus B_\eps(y)}|g(x,z)-g(y,z)|^2\, d\vol(z)\\
			&\leq C\int_{M\setminus B_\eps(x)\setminus B_\eps(y)}(d(x,z)^{-n-2}+d(y,z)^{-n-2})d(x,y)^2\, d\vol(z)\\
			&\leq  C\int_{M\setminus B_\eps(x)}d(x,z)^{-n-2}d(x,y)^2\, d\vol(z)\\
			&\qquad+ C\int_{M\setminus B_\eps(y)}d(y,z)^{-n-2}d(x,y)^2\, d\vol(z)\\
			&\leq \frac{d(x,y)^2}{\eps^2},
		\end{align*}
		which proves the claim.
	\end{proof}
	
	Combining Claim \ref{claim: first bound} and \ref{claim: second bound}, we have shown since $M$ is compact
	\begin{align*}
		&|k_{\ell}(x,x)+k_{\ell}(y,y)-2k_{\ell}(x,y)|
		\leq C \frac{d(x,y)}\eps+C\frac{d^2(x,y)}{\eps^2}\leq C\frac{d(x,y)}{\eps^2}.
	\end{align*}

\end{proof}

\begin{proposition}\label{prop: wn decomp}
	Let $(h_{\ell})_{\ell\in\N}$ the white noise truncation given in \eqref{eq: white noise trunc}.
	\begin{enumerate}[$(i)$]
		\item
		For each ${\ell}\in\N$, \(h_{\ell}\) is a centered continuous Gaussian field on $M$.
		\item 
		For all $\ell,m\in\N$ with $m>\ell$ and for all $x\in M$, $(h_m-h_{\ell})(x)$ is independent from $h_{\ell}(x)$. 
		\item For all $\ell,m\in\N$ with $m>\ell$, the correlation function of $h_m-h_{\ell}$ is non-negative, i.e. for all $x,y\in M$
		\begin{equation}\begin{aligned}\label{eq: pos corr}
				\E\Big[(h_m-h_\ell)(x)\, (h_m-h_\ell)(y)\Big] \quad\ge0.
		\end{aligned}\end{equation}
		\item
		As \(\ell\to \infty\), the fields \(h_{\ell}\) converge to the field \(h\) on \(M\) in the sense that
		\begin{align*}
			\scalar{h_{\ell}}{\varphi}\to \scalar{h}{\varphi} \text{ in }L^2(\Omega) \quad \forall \varphi\in L^2(M).
		\end{align*}
		\item  for every bounded nonnegative test function $\varphi\colon M\to\R$,
		\begin{align}\label{eq: variance}
			\mathbb{E}\!\left[\scalar{ h-h_{\ell}}{\varphi}^2\right]
			= O(\ell^{-1}),
		\end{align}
	\item For $\ell\in\N$ and $\gamma^2<\frac n8$
	\begin{align}\label{eq: white-noise}
		\int \Big[e^{\gamma^2\bar G_\ell(x,y)}-1\Big]\, d\vol(y)\leq C\gamma^2\ell^{-1}.
	\end{align}
	where $\bar G_\ell$ is the correlation function of $h-h_\ell$.
	\end{enumerate}
\end{proposition}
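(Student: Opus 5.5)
The plan is to realise all the fields on one white-noise probability space and write, for $m>\ell$,
\[
h_m(x)-h_\ell(x)=\int_{B_{1/\ell}(x)\setminus B_{1/m}(x)} g(x,y)\,dW(y),
\qquad
(h-h_\ell)(x)=\int_{B_{1/\ell}(x)} g(x,y)\,dW(y).
\]
Items $(ii)$ and $(iii)$ then follow from the white-noise isometry. For $(ii)$, the region $B_{1/\ell}(x)\setminus B_{1/m}(x)$ is disjoint from $M\setminus B_{1/\ell}(x)$, so the white-noise integrals are independent. For $(iii)$, the covariance of $h_m-h_\ell$ equals the integral of $g(x,z)g(y,z)$ over $(B_{1/\ell}(x)\setminus B_{1/m}(x))\cap(B_{1/\ell}(y)\setminus B_{1/m}(y))$, and this is nonnegative because $g\ge0$ by the choice of $\sigma$ via Lemma \ref{est-sqrt-k}. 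Note also that $g(x,\cdot)\mathbf 1_{M\setminus B_{1/\ell}(x)}\in L^2$ by \eqref{eq: green estimate}, so $h_\ell(x)$ is well defined.

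For $(i)$, I would apply Kolmogorov–Chentsov, or Dudley/Fernique, to the Gaussian field $h_\ell$. Its incremental variance is $k_\ell(x,x)+k_\ell(y,y)-2k_\ell(x,y)\le C_\ell d(x,y)$ by \eqref{eq: k cont}. Gaussian hypercontractivity then gives $\E|h_\ell(x)-h_\ell(y)|^{2p}\le C d(x,y)^p$, and taking $p>n$ yields a Hölder continuous modification.

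For $(iv)$ and $(v)$, first compute
\[
\E\big[\scalar{h-h_\ell}{\varphi}^2\big]=\iint\varphi(x)\varphi(y)\,\bar G_\ell(x,y)\,d\vol(x)\,d\vol(y),
\qquad
\bar G_\ell(x,y)=\int_{B_{1/\ell}(x)\cap B_{1/\ell}(y)}g(x,z)g(y,z)\,d\vol(z)\ge0 .
\]
This is because the cross terms between $\int_{B_\ell(x)}$ and $\int_{M\setminus B_\ell(y)}$ combine as $k-k_\ell$ only when the decomposition is done per point. I would also double-check this identity; the definitions make $\bar G_\ell=k-k_\ell$ plus cross terms, and I will use the first formula as the definition. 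Now $\bar G_\ell(x,y)=0$ when $d(x,y)>2/\ell$. On the other hand \eqref{eq: green estimate} and Cauchy–Schwarz give $\bar G_\ell(x,y)\le C\log\big(1+\tfrac{1}{\ell\,d(x,y)}\big)+C$, by the same radial integral as in part $(ii)$ of the preceding lemma, now restricted to $d(x,z)\le 1/\ell$. Integrating over $y$ in a ball of radius $2/\ell$ then gives
\[
\int\bar G_\ell(x,y)\,d\vol(y)\le C\ell^{-n}\le C\ell^{-1}.
\]
With bounded $\varphi$ this proves $(v)$, and $(iv)$ follows by density in $L^2$ together with the uniform bound $\E\scalar{h-h_\ell}{\varphi}^2\le\|\varphi\|_{L^2}^2\sup_x\int\bar G_\ell(x,y)\,d\vol(y)$ from Schur's test.

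For $(vi)$, use $e^{t}-1\le te^{t}$ and the pointwise bound above. On the ball of radius $2/\ell$ this gives $e^{\gamma^2\bar G_\ell}\le C(\ell\,d(x,y))^{-C\gamma^2}$. This is integrable in $y$ when $C\gamma^2<n$, which is where $\gamma^2<n/8$ enters; one needs the constant from the logarithm to be controlled, essentially $C\le 1$ up to the additive constant, using \eqref{eq: log}. The result is a bound of order $\gamma^2\ell^{-n}\le C\gamma^2\ell^{-1}$.

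I expect the main obstacle to be the sharp logarithmic bound on $\bar G_\ell$, with leading coefficient close to $1$, which the exponent threshold requires. Cauchy–Schwarz with \eqref{eq: green estimate} only gives the coefficient $C_1^2\cdot\mathrm{vol}(S^{n-1})$. If that constant is too large, I would instead bound $\bar G_\ell\le k(x,y)+\text{const}\le\log\frac1{d(x,y)}+C$ via monotonicity in the truncation region; this holds since $g\ge0$. Combined with the support restriction $d(x,y)\le 2/\ell$, this gives $e^{\gamma^2\bar G_\ell}\le Cd(x,y)^{-\gamma^2}$, which is integrable, yielding $\gamma^2\ell^{-n+\gamma^2}\le C\gamma^2\ell^{-1}$.
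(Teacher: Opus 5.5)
Parts $(i)$--$(iii)$ follow the paper: the white-noise isometry together with $g\ge0$ gives $(ii)$ and $(iii)$, and Kolmogorov--Chentsov from \eqref{eq: k cont} gives $(i)$. Your moment bound $\E|h_\ell(x)-h_\ell(y)|^{2p}\le C\,d(x,y)^p$ with $p>n$ is the correct form of that estimate.

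For $(iv)$--$(vi)$ your route genuinely differs from the paper's. The paper argues as follows:
\begin{itemize}
\item $(iv)$ qualitatively, via $k_\ell\nearrow k$;
\item $(v)$ via the isometry $\E\scalar{h-h_\ell}{\varphi}^2=\big\|\int g(x,\cdot)\mathbf 1_{B_{1/\ell}(x)}\varphi(x)\,d\vol(x)\big\|_{L^2}^2$ followed by Cauchy--Schwarz;
\item $(vi)$ via $e^t-1\le t+t^2e^t$, the smallness of $\int\bar G_\ell(x,y)\,d\vol(y)$, and the exponential moment $\int e^{\frac n2 k(x,y)}\,d\vol(y)<\infty$. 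This is where $\gamma^2<n/8$ enters.
\end{itemize}
You instead work with the explicit kernel $\bar G_\ell(x,y)=\int_{B_{1/\ell}(x)\cap B_{1/\ell}(y)}g(x,z)g(y,z)\,d\vol(z)$. This is indeed the covariance of $h-h_\ell$; it is not $k-k_\ell$, which integrates over the union of the balls, so no further check is needed. You then use only two structural facts: $\bar G_\ell(x,y)=0$ for $d(x,y)\ge2/\ell$, and $0\le\bar G_\ell\le k\le\log\frac1{d(x,y)}+C$ because $g\ge0$.

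This buys several things. You get $(iv)$ quantitatively for every $\varphi\in L^2$ by Schur's test, with no density step. You get a better rate in $(v)$. You get $(vi)$ in one line with the rate $\ell^{-1}$ in every even dimension, whereas the paper's Cauchy--Schwarz step, combined with $\int\bar G_\ell\,d\vol(y)=O(\ell^{-n/2})$, yields only $\ell^{-1/2}$ when $n=2$. Your route also avoids the paper's intermediate quantity $\iint_{d(x,y)<1/\ell}g(x,y)^2$ in $(v)$. That quantity is infinite, since $\sqrt{\mathsf k_0}$ is not Hilbert--Schmidt: $\sum_j\nu_j^{-1}=\infty$ by \eqref{eq: weyl}.

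Two local repairs are needed.

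First, the pointwise bound $\bar G_\ell(x,y)\le C\log\big(1+\frac1{\ell\,d(x,y)}\big)+C$ does not follow from \eqref{eq: green estimate} and Cauchy--Schwarz. For $d(x,y)<1/\ell$ the set $B_{1/\ell}(x)\cap B_{1/\ell}(y)$ contains a neighbourhood of $x$, so the factor $\int g(x,z)^2\,d\vol(z)$ over it diverges; the radial integral is $\int_0^{1/\ell}r^{-1}\,dr$. You never actually need this bound. Parts $(iv)$ and $(v)$ only use $\sup_x\int\bar G_\ell(x,y)\,d\vol(y)\to0$, and Fubini gives this directly from \eqref{eq: green estimate}:
$$\int\bar G_\ell(x,y)\,d\vol(y)=\int_{B_{1/\ell}(x)}g(x,z)\int_{B_{1/\ell}(z)}g(y,z)\,d\vol(y)\,d\vol(z)\le C\ell^{-n}.$$
Alternatively, your fallback bound $\bar G_\ell\le k$ combined with the support property gives $C\ell^{-n}(1+\log\ell)$.

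Second, in $(vi)$ the prefactor $\bar G_\ell$ coming from $e^t-1\le te^t$ contributes a logarithm. The estimate is therefore $C\gamma^2\ell^{-(n-\gamma^2)}(1+\log\ell)$, with $C$ uniform in $\gamma^2<n/8$. Since $n-\gamma^2>7n/8>1$, this is still $\le C\gamma^2\ell^{-1}$.

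With these adjustments your argument is complete.
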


\begin{proof}
	$(i)$ In order to check continuity,
	note that 
	\begin{align*}
		\E[|h_{\ell}(x)-h_{\ell}(y)|^2]=k_{\ell}(x,x)+k_{\ell}(y,y)-2k_{\ell}(x,y).
	\end{align*}
	In particular by \eqref{eq: k cont}, $h_{\ell}(x)-h_{\ell}(y)$ is a centered Gaussian random variable with covariance dominated by $C_{\ell}d(x,y)$. Therefore, it has finite moments of all orders $p>1$, and there exists a constant $C_{l,p}>0$ such that 
	\begin{align*}
		\E[|h_{\ell}(x)-h_{\ell}(y)|^p]\leq C_{l,p}d(x,y)^p, \quad x,y\in M.
	\end{align*}
	By the Kolmogorov-Chentsov Theorem, $h_{\ell}$ has a continuous version almost surely.
	
	$(ii)$ Note that
	\begin{align}\label{eq: increment}
		(h_m-h_{\ell})(x)=\int_{B_{1/\ell}(x)\setminus B_{1/m}(x)}{ g}(x,y)\, dW(y) 
	\end{align} 
	is a centered Gaussian random variable. The independence of $h_{\ell}(x)$ follows from \eqref{eq: white noise trunc} and
	\begin{align*}
		\E[(h_m-h_{\ell})(x)h_{\ell}(x)]=0.
	\end{align*}
	
	$(iii)$  By \eqref{eq: increment} we find
	\begin{align*}
		\E\Big[(h_m-h_\ell)(x)\, &(h_m-h_\ell)(y)\Big]\\
		&=\int_{(B_{1/\ell}(x)\setminus B_{1/m}(x))\cap (B_{1/\ell}(y)\setminus 
			B_{1/m}(y))}{ g}(x,z) { g}(y,z)\,d\vol(z) \ge0.
	\end{align*}
	
	$(iv)$ By definition of \(h_{\ell}\) and $k_{\ell}$ we have 
	\[
	\E[\scalar{ h_{\ell}}{\varphi}^2] =\iint k_{\ell}(x,y)\varphi(x)\varphi(y)\, d\vol(x)\, d\vol(y)
	\]
	and as $\ell\to\infty$
	$$k_\ell(x,y)\nearrow
	\int_{M}{ g}(x,z) { g}(y,z)\,d\vol(z)=k(x,y)$$ 
	by \eqref{eq: green}. Thus,
	\[
	\scalar{h_{\ell}}{\varphi} \xrightarrow{L^2(\Omega)} \scalar{h}{\varphi}.
	\]
	$(v)$
	Note that 
	$$\scalar{ h-h_{\ell}}{\varphi}=\Big\langle W\Big|
	\int_M { g}(x,.)1_{B_{1/\ell}(x)}(.)\varphi(x)d\vol(x)\Big\rangle$$
	and thus
	\begin{align*}
		\E[\scalar{ h-h_{\ell}}{\varphi}^2]&=
		\Big\|\int_M { g}(x,.)1_{B_{1/\ell}(x)}(.)\varphi(x)d\vol(x)\Big\|_{L^2}^2\\
		&\le \|\varphi\|^2\cdot \int\int_{B_{1/\ell}(x)}{ g}^2(x,y)d\vol(y)d\vol(x)\\
		&\le \|\varphi\|^2\cdot \int_{B_{1/\ell}(0)\subset M^2}\Big[C_0+C_1\frac1{|x-y|^{n/2}}\Big]^2d\vol(y)d\vol(x)\\
		&\le \|\varphi\|^2\cdot \int_0^{1/\ell}\Big[C'_0+C'_1\frac1{r^n}\Big]r^{2n-1}dr\\
		&\le \|\varphi\|^2\cdot C\,\Big(\frac1\ell\Big)^n.
	\end{align*}

$(vi)$ According to \eqref{eq: pos corr}, the correlation functions of $h_m-h_\ell$ are nonnegative for all $m\ge\ell$. Passing to the limit $m\to\infty$ yields
$\bar G_\ell(x,y)\ge0$ for all $x,y$.

Then for $\gamma^2<\frac n8$,
\begin{align*}
	&	\int \Big[e^{\gamma^2\bar G_\ell(x,y)}-1\Big]\, d\vol(y)\le\gamma^2\, \int \Big[\bar G_\ell(x,y)+\gamma^2 \bar G_\ell(x,y)^2e^{\gamma^2\bar G_\ell(x,y)}\Big]d\vol(y)\\
	&\le\gamma^2\, \int \Big[\bar G_\ell(x,y)+\gamma^{1/2} \bar G_\ell(x,y)^{1/2} e^{2\gamma^2\bar G_\ell(x,y)}\Big]d\vol(y)\\
	&\le \gamma^2\, \int  \bar G_\ell(x,y)d\vol(y)+ \gamma^{5/2}\, \Big(\int  \bar G_\ell(x,y)d\vol(y)\Big)^{1/2}\cdot \Big(\int e^{\frac n2\bar G_\ell(x,y)}d\vol(y)\Big)^{1/2}
\end{align*}
with
$$\int e^{\frac n2\bar G_\ell(x,y)}d\vol(y)\le \int e^{\frac n2 k(x,y)}d\vol(y)\le\int \frac1{d(x,y)^{n/2}}d\vol(y)\le C$$
and
\begin{align*}\int \bar G_\ell(x,y)d\vol(y)&=\int_M \int_{B_{1/\ell}(x,y)}{ g}(x,z){ g}(z,y)d\vol(z)d\vol(y)\\
	&\le\int_M \int_{B_{1/\ell}(x)}{ g}(x,z){ g}(z,y)d\vol(z)d\vol(y)\\
	&\le C \int_{B_{1/\ell}(x)}{ g}(x,z)d\vol(z)\\
	&\le C' \int_{1/\ell}^\infty r^{-n/2} r^{n-1}dr=C'' \ell^{-n/2} 
\end{align*}
\end{proof}

\subsection{Liouville measure}
Next we define the \emph{Liouville measure} associated to the augmented field $h$, which is based on the so-called \emph{Gaussian multiplicative chaos} formally given by
\begin{align*}
	e^{\gamma h(x)-\frac{\gamma^2}2k(x,x)}\, d\vol(x)
\end{align*}
for some suitable $\gamma\in \R$. 
The Gaussian multiplicative chaos (GMC) goes back to Kahane in \cite{Kah85} and is investigated as the Liouville measure in Liouville quantum gravity \cite{Ber17,DKRV16, DS11, LevyOz, RhoVar14}.

Following the approach by Shamov \cite{Sha16}, who gave a first canonical definition of GMC that does not depend on any specific approximation scheme by employing a simple transformation rule,
the authors in  \cite{Del24} introduce the Liouville measure for the co--polyharmonic field $h_0$.

Similarly to \cite[Theorem 4.1]{Del24}, we define for the augmented field:

\begin{definition}
	For $0<\gamma<\sqrt{2n}$ and the Gaussian field $h$, the Liouville Quantum measure $\mu^{\gamma h}$ is a measurable map
	\begin{equation*}
		\mu^{\gamma\bullet} \colon H^{-\varepsilon} \to \mathcal{M}_{b}(M), \qquad f \mapsto \mu^{\gamma f}\comma
	\end{equation*}
	satisfying
	
	\begin{enumerate}[$(i)$]
		\item
		for $\P$-a.e.~$h$ and every $\varphi \in \mathring H^{n/2}$,
		\begin{equation}\label{e:liouville-cm-shift}
			\mu^{\gamma(h+ \varphi)} = e^{\gamma\varphi}\, \mu^{\gamma h} \fstop
		\end{equation}
		
		\item
		for all Borel measurable $f \colon H^{-\varepsilon} \times M \to [0,\infty]$, we have that
		\begin{equation}\label{e:liouville-campbell}
			\E \int f(h, x) d\mu^{\gamma h}(x) = \E \int f\tparen{h + \gamma k(x,\emparg), x} \, d\vol(x) \fstop
		\end{equation}
	\end{enumerate}
\end{definition}
Moreover, for all $p \in \tparen{-\infty, \frac{2n}{\gamma^{2}}}$,
\begin{equation}\label{eq: liouville-moments}
	\E\tbraket{ {\mu^{\gamma h}(M)}^{p} } < \infty \fstop
\end{equation}
\begin{remark}
	The existence and uniqueness follows with the same arguments as presented in \cite{Del24}. However, let us note that
	$k(x,y)$ is of \emph{$\sigma$-positive type} since
	\begin{align*}
		k(x,y)=\int_{M\setminus (B_1(x)\cup B_1(y))}{ g}(x,y)\, d\vol(y) + \sum_{l=1}^\infty \int _{A_{\ell}(x,y)}{ g}(x,z){ g}(y,z)\, d\vol(z), 
	\end{align*}
	where $A_{\ell}(x,y)=(B_{1/\ell}(x)\cup B_{1/\ell}(y))\setminus(B_{1/(\ell+1)}(x)\cup B_{1/(\ell+1)}(y))$ are disjoint annular regions.
	In particular Kahane's original work \cite{Kah85} on GMC is directly applicable, which means there exists a subcritical GMC and it is unique by \cite{Sha16}.
\end{remark}

Note that by Lemma \ref{lem: h and h*} the Liouville measure associated to the augmented field $h$ differs from the definition $\mu^{\gamma h_0}$ in \cite{Del24} only by a global random multiplicative factor 
\begin{align*}
	\mu^{\gamma h}=e^{\gamma \sigma\xi-\frac{\gamma^2}2\sigma^2}\mu^{\gamma h_0}.
\end{align*}

In the next theorem, we show that the Liouville measure can be approximated by considering absolutely continuous random measures
\begin{align}\label{gmc-approx}
	d\mu^{\gamma h_{\ell}}(x):=\exp(\gamma h_{\ell}(x)-\frac{\gamma^2}2k_{\ell}(x,x))\, d\vol(x),
\end{align}
where $h_{\ell}$ is
the Gaussian field  given in \eqref{eq: white noise trunc} with associated covariance kernel $k_{\ell}$.

\begin{theorem}\label{t:WhiteNoiseApproxGMC}
	Let $0<\gamma<\sqrt{2n}$ and
	consider the white noise truncation $\seq{h_{\ell}}_{\ell \in \mathbb{N}}$ given in \eqref{eq: white noise trunc} with covariance kernel $\seq{k_{\ell}}_{\ell \in \mathbb{N}}$.
	Let $\seq{\mu^{\gamma h_{\ell}}}_{l \in \mathbb{N}}$ be as in \eqref{gmc-approx}.
	Then for all continuous bounded $\varphi\colon M\to\R$ 
	\begin{equation*}
		\mu^{\gamma h_{\ell}}(\varphi)\to \mu^{\gamma h}(\varphi)\qquad\P\text{-a.s. and in }L^1(\P)\text{ as }\ell\to\infty\fstop
	\end{equation*}
	
\end{theorem}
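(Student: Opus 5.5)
The plan is to show that $(\mu^{\gamma h_\ell}(\varphi))_\ell$ is a nonnegative martingale, hence converges almost surely, and then to identify its limit with $\mu^{\gamma h}(\varphi)$. We may assume $\varphi\ge0$; the general case follows by splitting $\varphi=\varphi^+-\varphi^-$.

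\emph{Martingale structure.} Realize all fields on one probability space from the same white noise $W$. Let $\mathcal F_\ell:=\sigma(h_\ell(x):x\in M)$. The fields $h_\ell(x)=\int_{M\setminus B_{1/\ell}(x)}g(x,y)\,dW(y)$ depend on $W$ outside a ball around each point, not on a fixed region, so $\mathcal F_\ell$ need not increase in $\ell$. I would therefore use Kahane's framework, which the Remark above already invokes. Decompose
\begin{align*}
k=k_1+\sum_{j\ge1}(k_{j+1}-k_j),
\end{align*}
where each increment $k_{j+1}-k_j$ is a nonnegative, positive definite, continuous kernel. Nonnegativity is exactly \eqref{eq: pos corr}; continuity follows from \eqref{eq: k cont}. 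Build independent centered continuous Gaussian fields $Y_j$ with covariance $k_{j+1}-k_j$, and set $\tilde h_\ell:=Y_0+\dots+Y_{\ell-1}$, where $Y_0$ has covariance $k_1$. Then $\tilde h_\ell$ has the same covariance $k_\ell$ as $h_\ell$, and $\tilde\mu_\ell(\varphi):=\int\varphi\,e^{\gamma\tilde h_\ell-\frac{\gamma^2}2k_\ell(x,x)}\,d\vol$ is a positive martingale in the filtration generated by $Y_0,\dots,Y_{\ell-1}$. It therefore converges almost surely.

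\emph{Uniform integrability.} For $\gamma^2<n$ (the $L^2$ phase) I would bound the second moment directly:
\begin{align*}
\E[\tilde\mu_\ell(\varphi)^2]=\iint\varphi(x)\varphi(y)e^{\gamma^2k_\ell(x,y)}\,d\vol(x)\,d\vol(y)\le C\iint d(x,y)^{-\gamma^2 C}\,d\vol\,d\vol.
\end{align*}
The inequality uses \eqref{eq: domination}, sharpened via \eqref{eq: log} to $k_\ell\le\log\frac1{d}+C$. The right-hand side is finite when $\gamma^2<n$. For $n\le\gamma^2<2n$ I would follow Kahane and Berestycki's thick-points argument: split off the event that $x$ is a $\gamma$-thick point, which uses only the bound $k_\ell\le\log\frac1{d\vee 1/\ell}+C$, and deduce uniform integrability, hence $L^1$ convergence. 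Kahane's theorem then identifies the limit as the subcritical GMC with kernel $k$. By uniqueness \cite{Sha16}, this limit satisfies \eqref{e:liouville-cm-shift} and \eqref{e:liouville-campbell}. The Campbell identity follows from Girsanov applied at level $\ell$ and then passing to the limit.

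\emph{Transfer to the actual $h_\ell$.} This is the step I expect to be the main obstacle: statements about $\tilde h_\ell$ are only in law, while the theorem asserts almost sure convergence for the concrete $h_\ell$ coupled to $h$. First I would get $L^1$ convergence by proving $\E|\mu^{\gamma h_\ell}(\varphi)-\mu^{\gamma h}(\varphi)|\to0$. Write $h=h_\ell+(h-h_\ell)$. Part $(ii)$ of Proposition \ref{prop: wn decomp} shows the pointwise independence of the increment, and part $(vi)$ controls its correlation $\bar G_\ell$. Together they give, in the $L^2$ phase, the second-moment computation
\begin{align*}
\E\big[(\mu^{\gamma h}(\varphi)-\mu^{\gamma h_\ell}(\varphi))^2\big]\lesssim\sup_x\int\big[e^{\gamma^2\bar G_\ell(x,y)}-1\big]\,d\vol(y)\le C\gamma^2\ell^{-1}.
\end{align*}
Extending this beyond the $L^2$ range would again require truncation at thick points.

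\emph{Almost sure convergence.} Since $\sum_\ell\ell^{-1}$ diverges, Borel--Cantelli does not apply to the full sequence. I would instead use monotonicity of the conditional structure: the sequence $\E[\mu^{\gamma h}(\varphi)\mid\sigma(W|_{\{d(x,\cdot)\ge1/\ell\}})]$ is a reverse-type martingale. If that fails, I would prove almost sure convergence along $\ell=2^k$ and control the oscillation in between using the maximal inequality for the martingale $\tilde\mu_\ell$.
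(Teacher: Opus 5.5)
\paragraph{The auxiliary martingale does not exist as constructed}
Your doubt about $\sigma(h_\ell)$ is well founded, and it is exactly what the paper's short proof passes over. That proof asserts $\E[\mu^{\gamma h_m}(\varphi)\mid\sigma(h_\ell)]=\mu^{\gamma h_\ell}(\varphi)$ and takes uniform integrability from Kahane's convexity inequality, using $k_\ell\le k$, $f(t)=t^{1+\varepsilon}$ and \eqref{eq: liouville-moments}. The identity needs $(h_m-h_\ell)(x)\perp h_\ell(y)$ for all $y$. Proposition~\ref{prop: wn decomp}$(ii)$ only covers $y=x$, and in general
$\E[(h_m-h_\ell)(x)h_\ell(y)]=\int_{(B_{1/\ell}(x)\setminus B_{1/m}(x))\setminus B_{1/\ell}(y)}g(x,z)g(y,z)\,d\vol(z)>0$.

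Your repair has its own gap. The fields $Y_j$ exist only if $k_{j+1}-k_j$ is positive definite, and you never show this. \eqref{eq: pos corr} does not give it: it concerns the covariance of $h_{j+1}-h_j$, which differs from $k_{j+1}-k_j$ by precisely these cross-covariances. Pointwise nonnegativity of $k_{j+1}-k_j$ is true, but it follows from \eqref{eq: kernel wn decomp} and $g\ge0$, and it is not what $\sigma$-positivity requires.

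There is also no reason to expect positive definiteness. In the flat model $g(x,z)=|x-z|^{-n/2}$, the increment $k_{\ell+1}-k_\ell$ has Fourier transform $\widehat{G_{1/(\ell+1)}}^{\,2}-\widehat{G_{1/\ell}}^{\,2}$. Here $G_\epsilon(w)=|w|^{-n/2}1_{\{|w|\ge\epsilon\}}$ and $\widehat{G_\epsilon}(\xi)=c\,|\xi|^{-n/2}\int_{\epsilon|\xi|}^\infty J_{n/2-1}(s)\,ds$. This transform vanishes on spheres $|\xi|=\rho/\epsilon$ that move with $\epsilon$, so the difference is negative on some sphere. The Remark on $\sigma$-positivity before Theorem~\ref{t:WhiteNoiseApproxGMC} asserts this decomposition without proof, so you cannot rely on it.

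\paragraph{The transfer to the actual $h_\ell$ is not carried out}
Even granting some auxiliary martingale $\tilde\mu_\ell$, it has the law of $\mu^{\gamma h_\ell}(\varphi)$ only for each fixed $\ell$. So the transfer step, which is the real content of the theorem, is still open.

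Your second-moment bound is misstated. Mixed moments of $\mu^{\gamma h}(\varphi)$ and $\mu^{\gamma h_\ell}(\varphi)$ involve the cross-covariance of $h$ and $h_\ell$, not just $\bar G_\ell$. The integrand also carries the factor $e^{\gamma^2 k_\ell(x,y)}$, of order $\ell^{\gamma^2}$ near the diagonal, so no $O(\ell^{-1})$ rate comes out.

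For $\gamma^2<n$ a rate-free argument does work. One has $\E[\mu^{\gamma h_m}(\varphi)\mu^{\gamma h_\ell}(\varphi)]=\iint\varphi(x)\varphi(y)e^{\gamma^2c_{m,\ell}(x,y)}\,d\vol(x)\,d\vol(y)$, where $c_{m,\ell}(x,y)=\int_{M\setminus(B_{1/m}(x)\cup B_{1/\ell}(y))}g(x,z)g(y,z)\,d\vol(z)\le k(x,y)$. Since $e^{\gamma^2 k}\in L^1(M^2)$, dominated convergence makes $(\mu^{\gamma h_\ell}(\varphi))_\ell$ Cauchy in $L^2(\P)$. The limit must then still be identified with $\mu^{\gamma h}(\varphi)$.

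For $n\le\gamma^2<2n$ you give only a pointer to thick points. Uniform integrability there does follow from Kahane's convexity inequality as in the paper, but convergence in probability is still missing.

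\paragraph{The almost sure statement is not proved}
The proposed reverse martingale conditions on $\sigma(W|_{\{d(x,\cdot)\ge 1/\ell\}})$. That $\sigma$-algebra depends on $x$, so there is no single $\sigma$-algebra on the probability space to condition on. The maximal inequality for $\tilde\mu_\ell$ concerns a different process. It gives no control of $\mu^{\gamma h_\ell}(\varphi)$ between dyadic times.
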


\begin{proof}%
	A standard computation yields that for all $m>\ell$
	\begin{align*}
		\E[\mu^{\gamma h_m}(\varphi)|\sigma(h_\ell)]=\mu^{\gamma h_\ell}(\varphi).
	\end{align*}
	In particular $(\mu^{\gamma h_\ell}(\varphi))_\ell$ is a nonnegative martingale.
	Since $k_l(x,y)\leq k(x,y)$, Kahane's comparison inequality \cite[Theorem 28]{Sha16} implies that for every convex $f\colon \R_+\to\R_+$
	\begin{align*}
		\E[f(\mu^{\gamma h_l}(M))]\leq \E[f(\mu^{\gamma h}(M))].
	\end{align*}
	Choosing $f(t)=t^{1+\eps}$ for some $\eps>0$, we obtain by \eqref{eq: liouville-moments} uniform integrability of $(\mu^{\gamma h_\ell}(M))_\ell$, and therefore of $(\mu^{\gamma h_l}(\varphi))_\ell$. The martingale convergence theorem then yields the assertion.
\end{proof}

\subsection{Polyakov--Liouville measure} 
The \emph{Polyakov--Liouville measure} is a fundamental object in the probabilistic formulation of two-dimensional conformal field theory and Liouville quantum gravity. Originally introduced by Polyakov in bosonic string theory, the rigorous construction of this measure has been carried out recently in various settings, see e.g. \cite{DKRV16,DRV16,GRV19}.
In dimension greater than $2$, analogous constructions were considered on spheres in \cite{Cercle}, previously anticipated in the physics literature \cite{LevyOz}.

On arbitrary
even-dimensional admissible manifolds, the Polyakov--Liouville meaure 
$\boldsymbol{\nu}_\gamma^0$ associated to the co-polyharmonic field $h_0$ has been introduced in \cite{Del24} and is formally given by
\begin{equation*}
	\boldsymbol{\nu}^0_\gamma(dh) = \exp(-S(h)) dh
\end{equation*}
with (non-existing) Lebesgue measure $dh$ on the set of fields 
and the action functional
\begin{equation*}
	S(h) \coloneqq   \int_M\Big( \frac12\,h\mathsf ph +\Theta\,Q h + {m} e^{\gamma h}\Big) d \vol\comma
\end{equation*}
where  $m,\Theta,\gamma$ are suitable parameters. 

Accordingly, we define the Polyakov-Liouville measure $\boldsymbol{\nu}_{\kappa,\gamma}$ associated to the augmented field $h$ and $\kappa\in\R$.

\begin{definition}
	Let $\kappa\in\R$ and $h$ be the augmented field. Let $0<\gamma<\sqrt{2n}$ and $m>0,\Theta\in\R$. 
	The \emph{Polyakov--Liouville measure} $\boldsymbol{\nu}_{\kappa,\gamma}$ is defined as
	\begin{equation*}
		d{\boldsymbol{\nu}}_{\kappa,\gamma}(h)\coloneqq \exp\Big(-\Theta\scalar{h}{Q}- m \,\mu^{\gamma h}(M)\Big)\,d\P_{n\kappa/\gamma}(h)
	\end{equation*}
	where $\mu^{\gamma h}$ denotes the Liouville measure and $\P_{n\kappa/\gamma}$ is the law of $h+\frac n\gamma\kappa$, which is given by 
	$$\P_{n\kappa/\gamma}=(T_{\kappa,\gamma})_\# \P$$
	with $T_{\kappa,\gamma}\colon h \mapsto h+\frac n\gamma\kappa$.
\end{definition}
This is slightly different from the definition of the Polyakov--Liouville measure as introduced in \cite{Del24}
which rephrased in the current notation reads as 
\begin{equation*}
	d{\boldsymbol{\nu}}^0_\gamma(h)\coloneqq \exp\Big(-\Theta\scalar{h_0}{Q}- m \,\mu^{\gamma h_0}(M)\Big)\,d\mathring\P(h_0).
\end{equation*}
Note that with Lemma \ref{lem: h and h*} and $\mathcal N:=\mathcal N(0,1)$,
\begin{equation}\begin{aligned}\label{eq: comp pol}
		\int& f(h)
		d{\boldsymbol{\nu}}_{\kappa,\gamma}(h)
		=
		\int f(h_0+\sigma\xi+\frac n\gamma\kappa)\\
		&\cdot  \exp\Big(-\Theta\scalar{h_0+\sigma\xi+\frac n\gamma\kappa}{Q}- m \,e^{\gamma \xi+n\kappa}\,\mu^{\gamma h_0}(M)\Big)\,d\mathring\P(h_0)\, d\mathcal N(\xi).
\end{aligned}\end{equation}

It turns out that $\boldsymbol{\nu}_{\kappa,\gamma}$ is finite and in particular,
\begin{align*}
	\Q_{\kappa,\gamma}=\frac1{Z_{\kappa,\gamma}}\boldsymbol\nu_{\kappa,\gamma},
\end{align*}
is a probability measure,
where $Z_{\kappa,\gamma}$ is the normalizing constant.
\begin{lemma}\label{lemma: finite}
	The measure $\boldsymbol{\nu}_{\kappa,\gamma}$ is finite.
\end{lemma}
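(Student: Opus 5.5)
The plan is to compute the total mass $\boldsymbol\nu_{\kappa,\gamma}(H^{-\eps})$ via the decomposition \eqref{eq: comp pol} with $f\equiv1$, integrating out the constant mode $\xi$ first and then controlling the remaining expectation over $h_0$ by moment bounds.

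\textbf{Step 1: rewrite the total mass.} Using $\scalar{1}{Q}=Q(M)$ and the relation $\mu^{\gamma h}=e^{\gamma\sigma\xi-\frac{\gamma^2}{2}\sigma^2}\mu^{\gamma h_0}$, I write
\begin{align*}
\boldsymbol\nu_{\kappa,\gamma}(H^{-\eps})=e^{-\Theta\frac n\gamma\kappa Q(M)}\int e^{-\Theta\scalar{h_0}{Q}}\,I\big(\mu^{\gamma h_0}(M)\big)\,d\mathring\P(h_0),
\end{align*}
where
\begin{align*}
I(Y):=\int \exp\Big(a\xi-b\,Y e^{c\xi}\Big)\,d\mathcal N(\xi),
\end{align*}
with $a:=-\Theta\sigma Q(M)$, $c:=\gamma\sigma>0$ and $b:=m\,e^{n\kappa-\gamma^2\sigma^2/2}>0$. (If $\sigma=0$ the $\xi$-integral is trivial and Step 2 simply drops out.)

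\textbf{Step 2: integrate out $\xi$.} I split according to the sign of $a$.
\begin{itemize}
\item If $a\le0$ (which includes $Q(M)=0$), then since $e^{-bYe^{c\xi}}\le1$ I get $I(Y)\le\int e^{a\xi}\,d\mathcal N=e^{a^2/2}$.
\item If $a>0$ (the relevant case $Q(M)<0$, $\Theta>0$), I set $p:=a/c$ and use the elementary bound $e^{-t}\le C_p t^{-p}$ for $t>0$. This gives
\begin{align*}
e^{a\xi-bYe^{c\xi}}\le C_p\,b^{-p}Y^{-p}e^{a\xi-pc\xi}=C_p\,b^{-p}Y^{-p}.
\end{align*}
Hence $I(Y)\le C_p b^{-p}Y^{-p}$.
\end{itemize}
In the second case the double exponential kills the linear growth $e^{a\xi}$ as $\xi\to+\infty$, while the Gaussian density already handles $\xi\to-\infty$.

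\textbf{Step 3: control the $h_0$-expectation.} By Cauchy--Schwarz,
\begin{align*}
\E\Big[e^{-\Theta\scalar{h_0}{Q}}\big(\mu^{\gamma h_0}(M)\big)^{-p}\Big]\le \E\Big[e^{-2\Theta\scalar{h_0}{Q}}\Big]^{1/2}\,\E\Big[\big(\mu^{\gamma h_0}(M)\big)^{-2p}\Big]^{1/2}.
\end{align*}
The first factor is finite: $\scalar{h_0}{Q}$ is a centered Gaussian with variance $\mathcal K_0(Q,Q)<\infty$, since $Q$ is smooth. In the case $a\le0$, only this first factor is needed.

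\textbf{Main obstacle: finite negative moments of $\mu^{\gamma h_0}(M)$.} It remains to show $\E[(\mu^{\gamma h_0}(M))^{-2p}]<\infty$ for arbitrary $p>0$. The cited \eqref{eq: liouville-moments} covers all negative moments $p\in(-\infty,0)$, but it is stated for $\mu^{\gamma h}$ rather than $\mu^{\gamma h_0}$. I would transfer it using independence of $h_0$ and $\xi$:
\begin{align*}
\E\big[(\mu^{\gamma h})^{-q}\big]=\E\big[e^{-q\gamma\sigma\xi+q\gamma^2\sigma^2/2}\big]\;\E\big[(\mu^{\gamma h_0})^{-q}\big].
\end{align*}
The first factor on the right is a finite positive constant, so finiteness of the left side gives finiteness of $\E[(\mu^{\gamma h_0})^{-q}]$. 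Alternatively, I would cite the corresponding negative-moment statement in \cite{Del24} directly.

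Combining Steps 1--3 yields $\boldsymbol\nu_{\kappa,\gamma}(H^{-\eps})<\infty$.
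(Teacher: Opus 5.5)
Your proof is correct. For $a\le 0$ it is exactly the paper's argument: drop the Liouville term via $e^{-m\mu^{\gamma h}(M)}\le 1$, then use that the Gaussian variables $\xi$ and $\scalar{h_0}{Q}$ have finite exponential moments.

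You depart from the paper only in the case $a>0$, and there the detour is unnecessary. Your remark that the double exponential is needed to control $\xi\to+\infty$ is mistaken, since $\int e^{a\xi}\,d\mathcal N(\xi)=e^{a^2/2}<\infty$ for every $a\in\R$. So the bound $I(Y)\le e^{a^2/2}$ holds for either sign of $Q(M)$. The paper does exactly this without any case split: it bounds $\E_{\mathcal N(\kappa',\sigma^2)}\big[e^{-\Theta Q(M)X}\big]=e^{-\Theta Q(M)\kappa'+\Theta^2Q(M)^2\sigma^2/2}$ directly, using nothing about $\mu^{\gamma h_0}$ beyond $\mu^{\gamma h_0}(M)\ge 0$.

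Your route for $a>0$ is nonetheless valid. The bound $e^{-t}\le C_p t^{-p}$, the Cauchy--Schwarz step, and the transfer of negative moments from $\mu^{\gamma h}$ to $\mu^{\gamma h_0}$ through the independence of $\xi$ and $h_0$ all check out. But it costs you a genuinely nontrivial GMC input: finite negative moments of arbitrarily high order $2p=-2\Theta Q(M)/\gamma$, which the paper only asserts in \eqref{eq: liouville-moments}.

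What your route buys is that it uses the confining effect of the Liouville interaction on the zero mode, reducing everything to a negative moment $\E[(\mu^{\gamma h_0}(M))^{-a/c}]$. This is precisely the mechanism required when the zero mode carries Lebesgue measure, as in \cite{DKRV16}. In the present augmented setting, the $\mathcal N(0,\sigma^2)$ law of the zero mode already provides that confinement on its own.
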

\begin{proof}
	By \eqref{eq: comp pol}, we have for 
	$\mathcal N_{\kappa',\sigma^2}=\mathcal N(\kappa',\sigma^2)$ with $\kappa'=\frac n\gamma \kappa$,
	\begin{align*}
		\int 1\, d\boldsymbol{\nu}_{\kappa,\gamma}
		=	&\int_\R\int 	\exp\Big(-\Theta\scalar{h_0+r}{Q}- m \,e^{\gamma r}\mu^{\gamma h_0}(M)\Big)\,d\mathring\P(h_0)\, d\mathcal N_{\kappa',\sigma^2}(r)\\
		=&\int \exp(-\Theta\scalar{h_0}{Q})\E_{\mathcal N_{\kappa',\sigma^2}}[\exp(-\Theta Q(M)X-m\mu^{\gamma h_0}(M) e^{\gamma X})]\, d\mathring\P(h_0).
	\end{align*}
	For the expectation $\E_{\mathcal N_{\kappa',\sigma^2}}$ we further compute
	\begin{align*}
		&E_{\mathcal N_{\kappa',\sigma^2}}[\exp(-\Theta Q(M)X-m\mu^{\gamma h_0}(M) e^{\gamma X})]\\
		&\leq\E[\exp(-\Theta Q(M)X)]=e^{-\Theta Q(M)\kappa'+\Theta^2Q(M)^2\sigma^2/2}<\infty
	\end{align*}
	where we used that $m,\mu^{\gamma h_0}(M)\geq0$. This finishes the proof, since
	\begin{align*}
		\int \exp(-\Theta\scalar{h_0}{Q})\, d\mathring\P(h_0)=e^{\frac{\Theta^2}2\mathcal K_0(Q,Q)}<\infty.
	\end{align*}
\end{proof}

Recall that $\Q_{\kappa,\gamma}^{\mathrm{sc}}$ denotes the distribution of the rescaled fields $\hbar=\frac\gamma n h$
under the normalized Polyakov-Liouville measure $\Q_{\kappa,\gamma}$ with particular choice of constants
$$\Theta=\frac{na_n}{\gamma},  \quad m=\frac{na_n\Lambda_\kappa}{\gamma^2}.$$
In the next lemma we show that $Q_{\kappa,\gamma}^{\mathrm{sc}}$ can be obtained from  $ Q_{0,\gamma}^{\mathrm{sc}}$ through shifting by $\kappa$.
\begin{lemma}\label{lemma: Qkappa}
	Let $\Theta=\frac{na_n}{\gamma}$ and $m=\frac{na_n\Lambda_\kappa}{\gamma^2}$. We have 
	\begin{align*}
		Z_{\kappa,\gamma}=e^{-\Theta\scalar{\frac n\gamma\kappa}{Q}}Z_{0,\gamma}.
	\end{align*}
	Moreover, if $\hbar\sim Q_{0,\gamma}^{\mathrm{sc}}$, then $\hbar+\kappa\sim Q_{\kappa,\gamma}^{\mathrm{sc}}$ for $\kappa\in \R$.
\end{lemma}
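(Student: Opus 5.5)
The plan is to unwind the definition of $\P_{n\kappa/\gamma}$ as a push-forward and to show that the constant shift $\frac n\gamma\kappa$ drops out of the exponential interaction term. This works because $\Lambda_\kappa=e^{-n\kappa}\Lambda_0$ compensates the factor $e^{n\kappa}$ that the shift produces in the Liouville measure.

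Set $c:=\frac n\gamma\kappa$, $\Theta=\frac{na_n}\gamma$ and $m_\kappa=\frac{na_n\Lambda_\kappa}{\gamma^2}$. For a bounded measurable $f$ on $H^{-\eps}$, the definition $\P_{c}=(T_{\kappa,\gamma})_\#\P$ gives
\begin{align*}
\int f\,d\boldsymbol\nu_{\kappa,\gamma}=\E\Big[f(h+c)\exp\Big(-\Theta\scalar{h+c}{Q}-m_\kappa\,\mu^{\gamma(h+c)}(M)\Big)\Big].
\end{align*}
The first factor in the exponent splits directly: $\scalar{h+c}{Q}=\scalar hQ+\scalar cQ$, and $\scalar cQ$ is deterministic, so $e^{-\Theta\scalar{c}{Q}}$ can be pulled out of the expectation.

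The key step is the identity
\begin{align*}
\mu^{\gamma(h+c)}=e^{\gamma c}\mu^{\gamma h}=e^{n\kappa}\mu^{\gamma h}\qquad\P\text{-a.s.}
\end{align*}
The Cameron--Martin rule \eqref{e:liouville-cm-shift} is only stated for $\varphi\in\mathring H^{n/2}$, which excludes constants, so this identity is the main obstacle. I would justify it through the white-noise approximation of Theorem \ref{t:WhiteNoiseApproxGMC}. For the truncations, shifting by a constant leaves $k_\ell(x,x)$ unchanged, so
\begin{align*}
\exp\Big(\gamma(h_\ell+c)-\tfrac{\gamma^2}{2}k_\ell(x,x)\Big)=e^{\gamma c}\exp\Big(\gamma h_\ell-\tfrac{\gamma^2}{2}k_\ell(x,x)\Big)
\end{align*}
holds exactly. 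Passing to the a.s.\ limit $\ell\to\infty$ then gives the identity for the measurable map $\mu^{\gamma\bullet}$, understood as the a.s.\ limit of these approximations along $h+c$. As a consistency check, one can instead use the representation $h=h_0+\sigma\xi$ and the relation $\mu^{\gamma h}=e^{\gamma\sigma\xi-\frac{\gamma^2}2\sigma^2}\mu^{\gamma h_0}$. There a constant shift of $h$ amounts to shifting $\xi$, and the same factor $e^{\gamma c}$ appears, in agreement with \eqref{eq: comp pol}.

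With this identity in hand, the interaction term becomes
\begin{align*}
m_\kappa e^{n\kappa}\mu^{\gamma h}(M)=\frac{na_n\Lambda_\kappa e^{n\kappa}}{\gamma^2}\,\mu^{\gamma h}(M)=\frac{na_n\Lambda_0}{\gamma^2}\,\mu^{\gamma h}(M)=m_0\,\mu^{\gamma h}(M),
\end{align*}
where the middle equality uses $\Lambda_\kappa=e^{-n\kappa}\Lambda_0$ from Corollary \ref{cor: liouville}. Substituting this back yields
\begin{align*}
\int f\,d\boldsymbol\nu_{\kappa,\gamma}=e^{-\Theta\scalar cQ}\int f(h+c)\,d\boldsymbol\nu_{0,\gamma}(h).
\end{align*}

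Taking $f\equiv1$ gives $Z_{\kappa,\gamma}=e^{-\Theta\scalar{\frac n\gamma\kappa}{Q}}Z_{0,\gamma}$. For general $f$, dividing by $Z_{\kappa,\gamma}$ shows that the law of $h$ under $\Q_{\kappa,\gamma}$ is the law of $h+\frac n\gamma\kappa$ under $\Q_{0,\gamma}$. Applying the map $h\mapsto\frac\gamma n h$ sends $h+\frac n\gamma\kappa$ to $\hbar+\kappa$, so if $\hbar\sim\Q^{\mathrm{sc}}_{0,\gamma}$ then $\hbar+\kappa\sim\Q^{\mathrm{sc}}_{\kappa,\gamma}$, which is the claim.
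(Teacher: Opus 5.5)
Your proposal is correct and follows the same route as the paper's proof. You realize $\P_{n\kappa/\gamma}$ as the push-forward under $h\mapsto h+\frac n\gamma\kappa$, use $\mu^{\gamma(h+c)}=e^{\gamma c}\mu^{\gamma h}$ together with $\Lambda_\kappa=e^{-n\kappa}\Lambda_0$ to turn $m_\kappa$ into $m_0$, pull out the deterministic factor $e^{-\Theta\scalar{c}{Q}}$, and then take $f\equiv1$ and a general test functional.

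The paper simply asserts the constant-shift identity. Of your two justifications, the representation $h=h_0+\sigma\xi$ with $\mu^{\gamma h}=e^{\gamma\sigma\xi-\frac{\gamma^2}{2}\sigma^2}\mu^{\gamma h_0}$ is the cleaner one. The white-noise truncation of $h+c$ is not literally $h_\ell+c$; it only differs from it by a deterministic term that vanishes uniformly as $\ell\to\infty$.
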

\begin{proof}
	Let
	\[
	c_\kappa:=\frac{n}{\gamma}\kappa.
	\]
	By definition of the shifted Gaussian law,
	\[
	\mathbb P_{n\kappa/\gamma}
	=(\tau_{c_\kappa})_\#\mathbb P,
	\qquad
	\tau_{c_\kappa}(h)=h+c_\kappa,
	\]
	where $\mathbb P$ denotes the law of the augmented field
	$h=h_0+\sigma\xi$.
	
	Recall that the Liouville measure satisfies the multiplicative shift property
	\[
	\mu^{\gamma(h+c)}(M)
	=
	e^{\gamma c}\mu^{\gamma h}(M)
	\]
	for every constant $c$. Hence
	\[
	\mu^{\gamma(h+c_\kappa)}(M)
	=
	e^{\gamma c_\kappa}\mu^{\gamma h}(M)
	=
	e^{n\kappa}\mu^{\gamma h}(M).
	\]
	
	Since
	\[
	m=\frac{na_n\Lambda_\kappa}{\gamma^2},
	\]
	and by the choice $\Lambda_\kappa=e^{-n\kappa}\Lambda_0$, we obtain
	\[
	m\,\mu^{\gamma(h+c_\kappa)}(M)
	=
	\frac{na_n\Lambda_0}{\gamma^2}\mu^{\gamma h}(M).
	\]
	
	Therefore
	\begin{align*}
		Z_{\kappa,\gamma}
		&=
		\int
		\exp\Big(
		-\Theta \langle h,Q\rangle
		-m\,\mu^{\gamma h}(M)
		\Big)\,
		d\mathbb P_{n\kappa/\gamma}(h)
		\\
		&=
		\int
		\exp\Big(
		-\Theta \langle h+c_\kappa,Q\rangle
		-\frac{na_n\Lambda_0}{\gamma^2}\mu^{\gamma h}(M)
		\Big)\,
		d\mathbb P(h)
		\\
		&=
		e^{-\Theta\langle c_\kappa,Q\rangle}
		\int
		\exp\Big(
		-\Theta \langle h,Q\rangle
		-\frac{na_n\Lambda_0}{\gamma^2}\mu^{\gamma h}(M)
		\Big)\,
		d\mathbb P(h)
		\\
		&=
		e^{-\Theta\langle \frac{n}{\gamma}\kappa,Q\rangle}
		\,Z_{0,\gamma}.
	\end{align*}
	
	This proves the first claim.
	
	For the second claim, let $F$ be a bounded measurable functional on the
	space of rescaled fields. Then
	\begin{align*}
		\mathbb E_{Q_{\kappa,\gamma}^{\mathrm{sc}}}[F(\hbar)]
		&=
		\frac1{Z_{\kappa,\gamma}}
		\int
		F\!\left(\frac{\gamma}{n}h\right)
		e^{-\Theta\langle h,Q\rangle-m\mu^{\gamma h}(M)}
		\,d\mathbb P_{n\kappa/\gamma}(h)
		\\
		&=
		\frac1{Z_{\kappa,\gamma}}
		\int
		F\!\left(\frac{\gamma}{n}(h+c_\kappa)\right)
		e^{-\Theta\langle h+c_\kappa,Q\rangle
			-\frac{na_n\Lambda_0}{\gamma^2}\mu^{\gamma h}(M)}
		\,d\mathbb P(h)
		\\
		&=
		\frac{e^{-\Theta\langle c_\kappa,Q\rangle}}{Z_{\kappa,\gamma}}
		\int
		F\!\left(\frac{\gamma}{n}h+\kappa\right)
		e^{-\Theta\langle h,Q\rangle
			-\frac{na_n\Lambda_0}{\gamma^2}\mu^{\gamma h}(M)}
		\,d\mathbb P(h).
	\end{align*}
	
	Using
	\[
	Z_{\kappa,\gamma}
	=
	e^{-\Theta\langle c_\kappa,Q\rangle}Z_{0,\gamma},
	\]
	we obtain
	\begin{align*}
		\mathbb E_{Q_{\kappa,\gamma}^{\mathrm{sc}}}[F(\hbar)]
		&=
		\frac1{Z_{0,\gamma}}
		\int
		F\!\left(\frac{\gamma}{n}h+\kappa\right)
		e^{-\Theta\langle h,Q\rangle
			-\frac{na_n\Lambda_0}{\gamma^2}\mu^{\gamma h}(M)}
		\,d\mathbb P(h)
		\\
		&=
		\mathbb E_{Q_{0,\gamma}^{\mathrm{sc}}}
		\!\left[
		F(\hbar+\kappa)
		\right].
	\end{align*}
	
	Therefore the law of $\hbar+\kappa$ under
	$Q_{0,\gamma}^{\mathrm{sc}}$ coincides with
	$Q_{\kappa,\gamma}^{\mathrm{sc}}$, i.e.
	\[
	\hbar+\kappa
	\sim
	Q_{\kappa,\gamma}^{\mathrm{sc}}
	\qquad\text{whenever}\qquad
	\hbar\sim Q_{0,\gamma}^{\mathrm{sc}}.
	\]
	
\end{proof}

\subsection{Wick notation for $h$}

Let $ Z \sim \mathcal{N}(0, \varsigma^2) $ be a centered Gaussian random variable.  
For $\alpha\in\N$, the $\alpha$-th \emph{Wick power}  $\wick{Z^\alpha}$ is defined by
\begin{align*}
	\wick{Z^\alpha} = \varsigma^\alpha H_\alpha(\varsigma^{-1}Z)
\end{align*}
where $H_\alpha$ is the \textit{probabilists' Hermite polynomial} of degree $\alpha$
\[
H_\alpha(x) = \alpha! \sum_{j=0}^{\lfloor \alpha/2 \rfloor} \frac{(-1)^j}{j! (\alpha - 2j)!} \frac{x^{\alpha - 2j}}{2^j},
\]
see \cite[Section 1.3]{Jan97}.

For example:
\[
:Z^2: = Z^2 - \E{Z^2} = Z^2 - \varsigma^2
\]
\[
:Z^4: = Z^4 - 6\varsigma^2 Z^2 + 3\varsigma^4.
\]

If $Z$ is not centered, then $\wick{Z^n}$ is defined as
\begin{align*}
	\wick{Z^n}=\wick{\bar Z^n}
\end{align*} 
where $\bar Z =Z-\E[Z]$.

The Wick powers satisfy the orthogonality relation

\begin{align}\label{eq: wick}
	\E[\wick{Z^\alpha} \wick{Y^\beta}] = \alpha! \, \E[Z Y]^\alpha \, \mathbbm{1}_{\{\alpha = \beta\}},
\end{align}
where $Z,Y$ are two centered Gaussian random variables. In particular we have
\begin{align*}
	\E[\wick{Z^\alpha}]=0 .
\end{align*}
We also recall the Wick exponential given by
\begin{align*}
	\wick{e^Z}=\sum_{\alpha=0}^\infty \frac{\wick{Z^\alpha}}{\alpha!},
\end{align*}
which can be expressed by
\begin{align}\label{eq: wickexp}
	\wick{e^Z}=e^{Z-\frac12\varsigma^2}.
\end{align}

We define the Wick powers of the Gaussian field $h$ by considering approximations, cf. \cite[Section 2.3]{lacoin2017semiclassical}.

\begin{proposition}\label{prop: approxwick}
	Let~$(\M,\g)$ be admissible. Let 
	$\seq{h_{\ell}}_{l\in \N}$ be the family of centered Gaussian random fields given in \eqref{eq: white noise trunc} with covariance functions $(k_{\ell})_{\ell\in\N}$. 
	
	
	Then for all $\alpha\in\N$, $\varphi\in L^\infty(M)$,  
	$$\scalar{\wick{h_{\ell}^\alpha}}{\varphi}\to \scalar{\wick{h^\alpha}}{\varphi}$$
	as~${\ell} \to \infty$ in $L^2(\Omega)$. 

\end{proposition}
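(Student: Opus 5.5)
We will show that $(\scalar{\wick{h_\ell^\alpha}}{\varphi})_{\ell\in\N}$ is Cauchy in $L^2(\Omega)$. The limit will then be taken as the definition of $\scalar{\wick{h^\alpha}}{\varphi}$, as in \cite[Section 2.3]{lacoin2017semiclassical}, and it is independent of the chosen subsequence. Since $h_\ell$ is a continuous centered Gaussian field (Proposition \ref{prop: wn decomp}(i)), $\wick{h_\ell^\alpha}(x)=k_\ell(x,x)^{\alpha/2}H_\alpha(k_\ell(x,x)^{-1/2}h_\ell(x))$ is a continuous process. Hence $\scalar{\wick{h_\ell^\alpha}}{\varphi}=\int \wick{h_\ell(x)^\alpha}\varphi(x)\,d\vol(x)$ is well defined and square integrable, with uniformly bounded moments since $k_\ell(x,x)\le C+C\log\ell$ by \eqref{eq: domination}.

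The key step is to compute all cross moments using the orthogonality relation \eqref{eq: wick}. Since $(h_\ell(x),h_m(y))$ is jointly Gaussian, Fubini and \eqref{eq: wick} give, for all $\ell,m$,
\begin{align*}
\E\Big[\scalar{\wick{h_\ell^\alpha}}{\varphi}\scalar{\wick{h_m^\alpha}}{\varphi}\Big]=\alpha!\iint \E[h_\ell(x)h_m(y)]^\alpha\,\varphi(x)\varphi(y)\,d\vol(x)\,d\vol(y).
\end{align*}
By the white noise representation \eqref{eq: white noise trunc}, for $m\ge\ell$ we have $k_{\ell,m}(x,y):=\E[h_\ell(x)h_m(y)]=\int_{M\setminus(B_{1/\ell}(x)\cup B_{1/m}(y))}g(x,z)g(y,z)\,d\vol(z)$. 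Because $g\ge0$, this satisfies $0\le k_{\ell,m}\le k$ and is monotone in both indices. Consequently
\begin{align*}
\E\big[(\scalar{\wick{h_\ell^\alpha}}{\varphi}-\scalar{\wick{h_m^\alpha}}{\varphi})^2\big]=\alpha!\iint\big(k_\ell^\alpha+k_m^\alpha-2k_{\ell,m}^\alpha\big)\varphi(x)\varphi(y)\,d\vol(x)\,d\vol(y).
\end{align*}

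It then suffices to show that $k_\ell^\alpha$, $k_m^\alpha$ and $k_{\ell,m}^\alpha$ all converge to $k^\alpha$ in $L^1(M^2)$ as $\ell,m\to\infty$. Pointwise convergence off the diagonal follows from monotone convergence, since the excluded balls shrink to points and $g(x,\cdot)g(y,\cdot)\in L^1$ for $x\neq y$; this is exactly the argument used for \eqref{eq: convergence}. For domination, all these kernels lie between $0$ and $k(x,y)\le \log\frac1{d(x,y)}+C$ by \eqref{eq: log}. Since $M$ is $n$-dimensional, $\big(\log\frac1{d(x,y)}+C\big)^\alpha$ is integrable on $M^2$ for every $\alpha$, and $\varphi$ is bounded. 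Dominated convergence (or monotone convergence, since each kernel increases to $k$) therefore sends the right-hand side to $\alpha!\iint(k^\alpha+k^\alpha-2k^\alpha)\varphi\varphi=0$, which proves the Cauchy property.

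The main obstacle is the mixed kernel $k_{\ell,m}$. The paper records only same-index covariances $k_\ell$, so one must check from \eqref{eq: white noise trunc} that the joint law of $(h_\ell,h_m)$ is governed by the asymmetric truncation above. One must also confirm that it is dominated by $k$ and increases to it, which rests on the positivity $g\ge0$ from Lemma \ref{est-sqrt-k} and the choice of $\sigma$. The remaining measure-theoretic points are routine: interchanging expectation and integrals is justified by Fubini together with the integrable logarithmic bound, and the diagonal is a null set.
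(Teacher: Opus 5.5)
Your proposal is correct and follows the paper's proof. The paper also shows the Cauchy property in $L^2(\Omega)$ via the Wick orthogonality relation \eqref{eq: wick}, reduces it to convergence of $k_\ell^\alpha$, $k_m^\alpha$ and the mixed kernel $k_{\ell,m}^\alpha$ to $k^\alpha$, and concludes by dominated convergence with an integrable logarithmic bound. Your explicit formula for $k_{\ell,m}$ and the resulting bounds $k_{\min(\ell,m)}\le k_{\ell,m}\le k$, which follow from $g\ge 0$, make precise the step the paper covers only with ``the same reasoning applies''.
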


\begin{proof}

	Let $\alpha\in \N$ and $\varphi\in L^\infty(M)$. Define for ${\ell},m\in\N$
	\begin{align*}
		X_{l,m}=\scalar{\wick{h_{\ell}^\alpha}-\wick{h_m^\alpha}}{\varphi}.
	\end{align*}
	Then by \eqref{eq: wick}
	\begin{align*}
		&\E[|X_{\ell,m}|^2]
		=\iint \E[(\wick{h_{\ell}^\alpha(x)}-\wick{h_m^\alpha(x)})(\wick{h_{\ell}^\alpha(y)}-\wick{h_m^\alpha(y)})]\varphi(x)\varphi(y)\, d\vol(x)\, d\vol(y)\\
		=&\alpha!\iint(k_{\ell}(x,y)^\alpha+k_m(x,y)^\alpha-k_{\ell, m}(x,y)^\alpha-k_{\ell,m}(y,x)^\alpha)\varphi(x)\varphi(y)\, d\vol(x)\, d\vol(y),
	\end{align*}
	where $k_{\ell,m}(x,y)=\E[h_{\ell}(x)h_m(y)]$.
	
	By virtue of \eqref{eq: domination}, $k_{\ell}(x,y)$ is dominated by
	\begin{align*}
		|k_{\ell}(x,y)|^\alpha\leq C(1+(\log(1/d(x,y)))^\alpha) \quad \forall x\neq y
	\end{align*}
	for some constant $C>0$. The right hand side is integrable on $M\times M$, since for $d(x,y)=r$ with $0<r<\eps$ 
	\begin{align*}
		\int_0^\eps r^{n-1}\left(\log\left(\frac1r\right)\right)^\alpha\, dr
		=\int_{t_0}^\infty t^\alpha e^{-nt}\, dt.
	\end{align*}
	Since 
	$k_{\ell}(x,y)\to k(x,y)$ a.e.,
	dominated convergence yields
	\begin{align*}
		\iint k_{\ell}(x,y)^\alpha \varphi(x)\varphi(y)\, d\vol(x)\, d\vol(y)\to \iint k(x,y)^\alpha \varphi(x)\varphi(y)\, d\vol(x)\, d\vol(y)
	\end{align*}
	as ${\ell}\to\infty$. The same reasoning applies to $k_m^\alpha$ and $k_{\ell,m}^\alpha$ and we find
	\begin{align*}
		\E[|X_{\ell,m}|^2]\to0
	\end{align*}
	as ${\ell},m\to\infty$. Since $L^2(\Omega)$ is complete, the sequence converges to a limit in $L^2(\Omega)$ and the limit is denoted by $\scalar{\wick{h^\alpha}}{\varphi}$.

\end{proof}

\begin{proposition}\label{prop: approxm}
	Let $M$ be admissible. Let $h,(h_{\ell})_{l\in\N}$ be as in Proposition \ref{prop: approxwick} and $0<\gamma<\sqrt{2n}$. Then
	for all $\varphi\in L^\infty(M)$ we have 
	\begin{align*}
		\sum_{\alpha=0}^\infty\frac{\gamma^\alpha}{\alpha!}\scalar{\wick{h^\alpha}}{\varphi}=\int_M \varphi(x)\, d\mu^{\gamma  h}(x),
	\end{align*}
	where the above sum converges in $L^2(\Omega)$.
\end{proposition}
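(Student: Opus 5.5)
The plan is to prove the identity first at the level of the white noise truncations $h_\ell$, where everything is an honest function, and then pass to the limit $\ell\to\infty$ using Proposition \ref{prop: approxwick} and Theorem \ref{t:WhiteNoiseApproxGMC}.

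\textbf{Step 1: finite $\ell$.} Since $h_\ell(x)\sim\mathcal N(0,k_\ell(x,x))$, the Wick exponential formula \eqref{eq: wickexp} gives pointwise
\begin{align*}
e^{\gamma h_\ell(x)-\frac{\gamma^2}{2}k_\ell(x,x)}=\sum_{\alpha=0}^\infty\frac{\gamma^\alpha}{\alpha!}\wick{h_\ell(x)^\alpha}.
\end{align*}
I will integrate this against $\varphi\in L^\infty(M)$ and interchange sum and integral in $L^2(\Omega)$. By the orthogonality relation \eqref{eq: wick}, distinct chaoses are orthogonal, so
\begin{align*}
\Big\|\sum_{\alpha\ge N}\frac{\gamma^\alpha}{\alpha!}\scalar{\wick{h_\ell^\alpha}}{\varphi}\Big\|_{L^2(\Omega)}^2=\sum_{\alpha\ge N}\frac{\gamma^{2\alpha}}{\alpha!}\iint k_\ell(x,y)^\alpha\varphi(x)\varphi(y)\,d\vol(x)\,d\vol(y).
\end{align*}
Because $k_\ell\ge0$ is bounded by \eqref{eq: domination}, this tail is controlled by $\|\varphi\|_\infty^2\iint e^{\gamma^2k_\ell}$, which is finite. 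This yields $\mu^{\gamma h_\ell}(\varphi)=\sum_\alpha\frac{\gamma^\alpha}{\alpha!}\scalar{\wick{h_\ell^\alpha}}{\varphi}$ in $L^2(\Omega)$.

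\textbf{Step 2: identifying the chaos components of the limit.} For each fixed $\alpha$, Proposition \ref{prop: approxwick} gives $\scalar{\wick{h_\ell^\alpha}}{\varphi}\to\scalar{\wick{h^\alpha}}{\varphi}$ in $L^2(\Omega)$. On the other hand, Theorem \ref{t:WhiteNoiseApproxGMC} gives $\mu^{\gamma h_\ell}(\varphi)\to\mu^{\gamma h}(\varphi)$ in $L^1(\P)$; it applies to bounded continuous $\varphi$, and I extend it to $L^\infty$ by approximation together with $\E\mu^{\gamma h}(|\varphi-\varphi_j|)=\|\varphi-\varphi_j\|_{L^1}$.

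To identify the $\alpha$-th chaos projection of $\mu^{\gamma h}(\varphi)$, I test against the exponential functionals $E_\psi=e^{\scalar{h}{\psi}-\frac12\mathcal K(\psi,\psi)}$, $\psi$ smooth. Their linear span is dense in $L^2(\Omega)$, and they lie in every $L^p$. By the Campbell identity \eqref{e:liouville-campbell},
\begin{align*}
\E\big[E_\psi\,\mu^{\gamma h}(\varphi)\big]=\int\varphi(x)\,e^{\gamma\,\mathsf k\psi(x)}\,d\vol(x),
\end{align*}
whose expansion in $\gamma$ is
\begin{align*}
\sum_\alpha\frac{\gamma^\alpha}{\alpha!}\int\varphi\,(\mathsf k\psi)^\alpha\,d\vol=\sum_\alpha\frac{\gamma^\alpha}{\alpha!}\,\E\big[E_\psi\,\scalar{\wick{h^\alpha}}{\varphi}\big].
\end{align*}
The last equality is the standard identity $\E[E_\psi\wick{Z^\alpha}]=(\E[Z\scalar{h}{\psi}])^\alpha$, transported via the $\ell$-approximation. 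Hence the series and $\mu^{\gamma h}(\varphi)$ have the same pairing with a dense class, so the chaos components coincide.

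\textbf{Step 3: $L^2$ convergence of the series, the main obstacle.} By orthogonality, convergence of the sum in $L^2(\Omega)$ is equivalent to
\begin{align*}
\sum_\alpha\frac{\gamma^{2\alpha}}{\alpha!}\iint k^\alpha\varphi\varphi=\iint e^{\gamma^2k(x,y)}\varphi(x)\varphi(y)\,d\vol(x)\,d\vol(y)<\infty,
\end{align*}
and then the limit equals $\mu^{\gamma h}(\varphi)$ by Step 2. Using \eqref{eq: log} and $k=k_0+\sigma^2$, we have $e^{\gamma^2k}\le C\,d(x,y)^{-\gamma^2}$, which is integrable for $\gamma^2<n$; monotone convergence from $k_\ell\nearrow k$ then also gives $\mu^{\gamma h_\ell}(\varphi)\to\mu^{\gamma h}(\varphi)$ in $L^2$.

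For $n\le\gamma^2<2n$ this bound fails, and this is where I expect the real difficulty. For nonnegative $\varphi\not\equiv0$ the lower bound in \eqref{eq: log} shows $\E[\mu^{\gamma h}(\varphi)^2]=\infty$, consistent with the moment range \eqref{eq: liouville-moments}. In that regime the only available statement is that the series has the correct chaos components and converges to $\mu^{\gamma h}(\varphi)$ in the weak sense of Step 2, that is, tested against the dense class $E_\psi$. I would verify carefully whether this is the intended meaning of ``converges in $L^2(\Omega)$'' there, since genuine $L^2$ convergence cannot hold in that range.
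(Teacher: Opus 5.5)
Your concern about the range $n\le\gamma^2<2n$ is justified. The statement is false there, and the paper's own proof contains an error at exactly that point; this is not a gap in your argument.

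The paper exchanges $\lim_\ell$ and $\sum_\alpha$ using the bound
\[
\sum_{\alpha}\sup_\ell\Big\|\frac{\gamma^\alpha}{\alpha!}\scalar{\wick{h_\ell^\alpha}}{\varphi}\Big\|_{L^2(\Omega)}\le C\sum_{\alpha}\frac{\gamma^\alpha}{\alpha!}\sup_\ell\Big(\iint |k_\ell|^\alpha\,d\vol\,d\vol\Big)^{1/2}.
\]
This drops the factor $\sqrt{\alpha!}$ coming from \eqref{eq: wick}. With the factor restored, the $\alpha$-th term is $\frac{\gamma^\alpha}{\sqrt{\alpha!}}\big(\iint k_\ell(x,y)^\alpha\varphi(x)\varphi(y)\big)^{1/2}$. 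Since $\iint k^\alpha$ is of order $\int_0^\epsilon r^{n-1}\log(1/r)^\alpha\,dr\approx\alpha!\,n^{-\alpha-1}$, the series behaves like $\sum_\alpha(\gamma^2/n)^{\alpha/2}$.

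The paper's earlier remark has the same problem: $\iint\exp(\gamma^2C(1+|\log d(x,y)|))$ is finite only when $C\gamma^2<n$, not ``since $\gamma^2<2n$''. Distinct Wick chaoses are orthogonal and $k(x,y)\ge\log\frac1{d(x,y)}-C$. Hence, once $\gamma^2\ge n$, the $L^2(\Omega)$-norms of the partial sums blow up for every nonnegative $\varphi\not\equiv0$, which matches $\E[\mu^{\gamma h}(\varphi)^2]=\infty$ in that range.

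So the proposition holds only for $0<\gamma<\sqrt n$. This does not affect the main theorem, because the proposition is used only as $\gamma\to0$ in Proposition \ref{prop: convergence}, whose variance series is likewise finite only for $\gamma^2<n$. Your weak substitute for the larger range, namely convergence of the partial sums after pairing with each $E_\psi$, is correct.

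For $\gamma^2<n$ your proof is correct, but it identifies the limit by a different route from the paper's. The paper works entirely through the truncation:
\begin{enumerate}[$(1)$]
\item the fixed-$\ell$ Wick expansion;
\item chaos-by-chaos convergence from Proposition \ref{prop: approxwick};
\item a uniform-in-$\ell$ summability bound to exchange $\lim_\ell$ and $\sum_\alpha$, which, once corrected and combined with $k_\ell\le k$, is valid exactly when $\gamma^2<n$;
\item Theorem \ref{t:WhiteNoiseApproxGMC}, to recognise $\lim_\ell\mu^{\gamma h_\ell}(\varphi)$ as $\mu^{\gamma h}(\varphi)$.
\end{enumerate}
You instead read off the chaos decomposition of $\mu^{\gamma h}(\varphi)$ directly from the Campbell identity \eqref{e:liouville-campbell}, tested against $E_\psi$. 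You use the truncation only to justify $\E[E_\psi\scalar{\wick{h^\alpha}}{\varphi}]=\int\varphi(\mathsf k\psi)^\alpha\,d\vol$.

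Your route therefore needs neither Theorem \ref{t:WhiteNoiseApproxGMC} nor the martingale structure of $(h_\ell)$. In particular, your extension of that theorem to $L^\infty$ test functions is superfluous. It also makes the sharp threshold $\iint e^{\gamma^2k}\varphi\varphi<\infty$ visible. The corrected version of the paper's route gives, in addition, convergence of the truncated series to the full one uniformly in $\ell$.

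One point of logical order: in Step 2, passing $\E[E_\psi\,\cdot\,]$ through the infinite sum uses the $L^2(\Omega)$-convergence proved in Step 3. So the identification is complete only once Step 3 is in place, which is how you in fact use it.
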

\begin{proof}
	By virtue of Theorem \ref{t:WhiteNoiseApproxGMC}, we know that for $0<\gamma<\sqrt{2n}$ 
	\begin{align*}
		\lim_{l\to\infty}\int \varphi(x)e^{\gamma h_{\ell}(x)-\frac{\gamma^2}2k_{\ell}(x,x)}\, d\vol(x)=\int \varphi(x)\, d\mu^{\gamma h}(x).
	\end{align*}
	Applying  \eqref{eq: wickexp} to the Gaussian random field $h_{\ell}(x)$, we obtain for all $x\in M$
	\begin{align*}
		e^{\gamma h_{\ell}(x)-\frac{\gamma^2}2k_{\ell}(x,x)}=\sum_{\alpha=0}^\infty \frac{\gamma^\alpha}{\alpha!}\wick{h_{\ell}(x)^\alpha}\ .
	\end{align*}
	
	Testing against $\varphi$, we find after applying Fubini and \eqref{eq: domination}
	\begin{align*}
		\sum_{\alpha=0}^N\frac{\gamma^\alpha}{\alpha!}\scalar{\wick{h_{\ell}^\alpha}}{\varphi}\to\sum_{\alpha=0}^\infty\frac{\gamma^\alpha}{\alpha!}\scalar{\wick{h_{\ell}^\alpha}}{\varphi}
	\end{align*}
	as $N\to\infty$ in $L^2(\Omega)$ for each fixed ${\ell}$. Indeed,
	\begin{align*}
		\E\left[\big(\sum_{\alpha=0}^N\frac{\gamma^\alpha}{\alpha!}\scalar{\wick{h_{\ell}^\alpha}}{\varphi}\big)^2\right]=&\sum_{\alpha=0}^N\sum_{\beta=0}^N\frac{\gamma^\alpha\gamma^\beta}{\alpha!\beta!}\E\left[\scalar{\wick{h_{\ell}^\alpha}}{\varphi}\scalar{\wick{h_{\ell}^\beta}}{\varphi}\right]\\
		=&\sum_{\alpha=0}^N\sum_{\beta=0}^N\frac{\gamma^\alpha\gamma^\beta}{\alpha!\beta!}\iint \varphi(x) \varphi(y)\E\left[\wick{h_{\ell}^\alpha}\wick{h_{\ell}^\beta}\right]d\vol(x)d \vol(y)\\
		=&\sum_{\alpha=0}^N\frac{\gamma^{2\alpha}}{\alpha!}\iint \varphi(x) \varphi(y)k_{\ell}(x,y)^\alpha d\vol(x)d \vol(y),
	\end{align*}
	where we applied Fubini's theorem in the second line and \eqref{eq: wick} in the third line. Since $\varphi\in L^\infty(M)$ and $k_{\ell}$ satisfies \eqref{eq: domination}, the right hand side can be bounded by 
	\begin{align*}
		\iint \exp(\gamma^2 C(1+|\log d(x,y)|))\, d\vol(x)\, d\vol(y),
	\end{align*}
	which is finite since $\gamma^2<2n$.

	It remains to show that
	\begin{align*}
		\lim_{\ell\to\infty}\sum_{\alpha=0}^\infty\frac{\gamma^\alpha}{\alpha!}\scalar{\wick{h_{\ell}^\alpha}}{\varphi}=\sum_{\alpha=0}^\infty\frac{\gamma^\alpha}{\alpha!}\scalar{\wick{h^\alpha}}{\varphi}.
	\end{align*}
	For each fixed $\alpha$ we have by Proposition \ref{prop: approxwick}
	\begin{align*}
		\scalar{\wick{h_{\ell}^\alpha}}{\varphi}\to \scalar{\wick{h^\alpha}}{\varphi}
	\end{align*}
	as ${\ell}\to\infty$ in $L^2(\Omega)$. Since 
	\begin{align*}
		&\sum_{\alpha=0}^\infty\sup_{\ell}\|\frac{\gamma^\alpha}{\alpha!}\scalar{\wick{h_{\ell}^\alpha}}{\varphi}\|_{L^2(\Omega)}\\
		&\leq C\sum_{\alpha=0}^\infty \frac{\gamma^\alpha}{\alpha!}\sup_{\ell}\sqrt{\iint|k_{\ell}(x,y)|^\alpha d\vol(x)d\vol(y)},
	\end{align*}
	the series of $L^2$-random variables is uniformly absolutely convergent in $L^2(\Omega)$ by virtue of \eqref{eq: domination}. Hence, we may interchange limit and sum, which concludes the proof.
\end{proof}

\subsection{Massive fields}\label{sec: mass}

In the following we define the massive fields on an admissible manifold $(M,g)$.
For $m\geq0$, let 
\begin{align*}
	\mathsf p_{m}=\mathsf p+m.
\end{align*}
For $m>0$, the operator $\mathsf p_m$ is strictly positive, self-adjoint and elliptic on $L^2(M)$. Hence $\mathsf p_{m}^{-1}$ is a bounded, self-adjoint, positive operator on $L^2$ and maps $H^{s}$ onto $H^{s+n}$ for every $s\in\R$. 
By spectral decomposition of $\mathsf p$, using the $L^2$-orthonormal eigenbasis $\{\psi_j\}_{j\in \N}$ of $\mathsf p$ with eigenvalues $(\nu_j)_{j\in\N}$, $\mathsf p_m^{-1}$ admits a kernel 
\begin{align*}
	k_{m}(x,y)=\sum_{j=0}^\infty \frac{\psi_j(x)\psi_j(y)}{\nu_j+m},
\end{align*}
where $\psi_0=|M|^{-\frac12}$ and $\nu_0=0$ and 
the convergence of the series is understood in $L^2(M\times M)$, cf.\ \cite[Lemma 2.15]{Del24}. This kernel is smooth outside the diagonal with logarithmic divergence close to it.

For $m\geq0$, we consider the grounded operator $\mathring {\mathsf p}_m^{-1}$ on $L^2(M)$ given by
\begin{align*}
	\mathring {\mathsf p}_m^{-1}(\phi)={\mathsf p}_m^{-1}(\mathring \phi),\quad \mathring \phi=\phi-\langle \phi\rangle.
\end{align*}
This operator again admits a kernel given by
\begin{align*}
	\mathring k_{m}(x,y)=\sum_{j=1}^\infty \frac{\psi_j(x)\psi_j(y)}{\nu_j+m}
\end{align*}
and for $m>0$ it satisfies
\begin{align*}
	\mathring k_{m}(x,y)=k_m(x,y)-\frac1{m|M|}.
\end{align*}
Then,  the associated positive-definite bilinear form is 
\begin{align*}
	\mathring{	\mathcal K}_{m}(\phi,\eta)=&\scalar{\phi}{\mathring{\mathsf p}^{-1}_m\eta}=\sum_{j=1}^\infty \frac{\scalar{\phi}{\psi_j}\scalar{\eta}{\psi_j}}{\nu_j+m}.
\end{align*}
%
Next, we define a Gaussian field associated to the kernel $\mathring k_m$.

\begin{definition} A massive co-polyharmonic Gaussian field on $(M,g)$ with mass $m>0$ is a centered Gaussian random variable $\tilde h_0$ on $\mathring{\mathfrak D}'$ with covariance
	\begin{equation}
		\label{eq:covariance-chp-massive-grounded}
		\mathbb{E} \Big[\langle \tilde h_0|\phi\rangle \langle \tilde h_0|\eta\rangle\Big]=\mathring{\mathcal K}_{m}(\phi,\eta)
		\quad\qquad \forall \phi,\eta\in \mathring{\mathfrak D} ,
	\end{equation}
	where
	\begin{align*}
		\mathring{\mathcal K}_{m}(\phi,\eta)
		=&\sum_{j=1}^\infty \frac{\scalar{\phi}{\psi_j}\scalar{\eta}{\psi_j}}{\nu_j+m}.
	\end{align*}

\end{definition}

\begin{theorem}
	For every admissible manifold~$(\M,g)$ there exists a massive co-polyharmonic Gaussian field with mass $m>0$, unique in distribution.

\end{theorem}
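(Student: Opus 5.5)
The plan is to build the field explicitly by a random eigenfunction series, in the same way as for the co-polyharmonic field $h_0$ in \cite[Theorem 3.2, Proposition 3.9]{Del24}. Let $(\xi_j)_{j\ge1}$ be i.i.d.\ standard normal random variables and put
\begin{align*}
\tilde h_0:=\sum_{j=1}^\infty \frac{\xi_j\,\psi_j}{\sqrt{\nu_j+m}}.
\end{align*}
The first step is to show that this series converges almost surely, and in $L^2(\Omega)$, in $H^{-s}$ for every $s>0$. The partial sums are independent, centered, Gaussian summands in a Hilbert space. Their second moments satisfy
\begin{align*}
\E\|\tilde h_0\|_{H^{-s}}^2=\sum_{j\ge1}\frac{(1+\nu_j)^{-2s/n}}{\nu_j+m}.
\end{align*}
By the Weyl asymptotic \eqref{eq: weyl}, $\nu_j\sim cj$, so this sum behaves like $\sum_j j^{-1-2s/n}<\infty$. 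The Itô--Nisio theorem, or simply $L^2$ convergence of a martingale in $H^{-s}$, then gives almost-sure convergence. Since the $\psi_j$ with $j\ge1$ are orthogonal to constants, the limit defines an element of $\mathring{\mathfrak D}'$: it annihilates constants, or equivalently only its pairings with grounded test functions are recorded.

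The second step is to compute the covariance. For $\phi,\eta\in\mathring{\mathfrak D}\subset H^{s}$, the pairing $\scalar{\tilde h_0}{\phi}=\sum_j \xi_j\scalar{\psi_j}{\phi}(\nu_j+m)^{-1/2}$ converges in $L^2(\Omega)$, so it is a centered Gaussian variable. By independence of the $\xi_j$,
\begin{align*}
\E\big[\scalar{\tilde h_0}{\phi}\scalar{\tilde h_0}{\eta}\big]=\sum_{j\ge1}\frac{\scalar{\phi}{\psi_j}\scalar{\eta}{\psi_j}}{\nu_j+m}=\mathring{\mathcal K}_m(\phi,\eta).
\end{align*}
Joint Gaussianity of finitely many pairings follows in the same way, since all of them are $L^2$-limits of linear combinations of the same $\xi_j$. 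This gives existence.

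The third step is uniqueness in distribution. A centered Gaussian random variable on $\mathring{\mathfrak D}'$ is determined by its finite-dimensional marginals $(\scalar{\tilde h_0}{\phi_1},\dots,\scalar{\tilde h_0}{\phi_k})$. These are centered Gaussian vectors whose covariance matrices are fixed by $\mathring{\mathcal K}_m$. The cylinder $\sigma$-algebra generated by these pairings coincides with the Borel $\sigma$-algebra, because $H^{-s}$ is separable and the dual pairings with a countable dense set of test functions separate points. So any two such fields agree in law.

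The only step needing real care is the first: choosing the Sobolev index so that the series converges, and checking that the Weyl law \eqref{eq: weyl} gives summability. The decay $(\nu_j+m)^{-1}\sim j^{-1}$ is borderline, so one needs any $s>0$ (i.e.\ $H^{-\varepsilon}$) and cannot work in $L^2$. An alternative for the existence step is the Minlos/Bochner theorem on the nuclear space $\mathring{\mathfrak D}$. That route only requires continuity of $\mathring{\mathcal K}_m$, which is immediate from $\mathring{\mathcal K}_m(\phi,\phi)\le m^{-1}\|\phi\|_{L^2}^2$.
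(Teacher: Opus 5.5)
Your proof is correct, but your main line differs from the paper's. The paper proves this theorem abstractly. It checks that $\chi(\phi)=\exp(-\frac12\mathring{\mathcal K}_m(\phi,\phi))$ is continuous and positive definite on the nuclear space $\mathring{\mathfrak D}$, and the Bochner--Minlos theorem then gives existence and uniqueness in one stroke. The explicit eigenfunction series is postponed to the separate Theorem \ref{thm: massive}, whose proof is only a reference to \cite[Proposition 3.9]{Del24}.

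You instead build the field directly by the Karhunen--Lo\`eve series. You use the Weyl law to get convergence in $H^{-s}$ for every $s>0$, identify the covariance, and argue uniqueness from finite-dimensional marginals. Your route gives more: an explicit realization and almost-sure regularity $\tilde h_0\in H^{-\varepsilon}$. So it proves this theorem and Theorem \ref{thm: massive} together.

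The Minlos route is softer and needs no spectral asymptotics. In fact, your bound $\mathring{\mathcal K}_m(\phi,\phi)\le m^{-1}\|\phi\|_{L^2}^2$ gives the required continuity more directly than the paper's estimate. The paper's version goes through $\|\mathsf p_m\phi\|_{L^2}$ and the summability of $\sum_j(\nu_j+m)^{-3}$, which invokes Weyl's law unnecessarily.

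One small point about your uniqueness step. A competing field is only assumed to be a random element of $\mathring{\mathfrak D}'$, not of $H^{-s}$. So the relevant fact is that laws on $\mathring{\mathfrak D}'$ with its cylindrical $\sigma$-algebra (the one Minlos works with) are determined by finite-dimensional marginals, via a $\pi$--$\lambda$ argument. The separability of $H^{-s}$ is not what is needed there.
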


\begin{proof}
	The space $\mathring{\mathfrak D}$ endowed with its usual Fr\'echet topology is a nuclear space \cite[p.\ 55]{Gro66}. We will show that
	\begin{align*}
		\chi(\phi)=\exp(-\frac12\mathring{\mathcal K}_{m}(\phi,\phi))
	\end{align*}
	is a continuous positive-definite function on $\mathring{\mathfrak D}$ with $\chi(0)=1$. Then the Bochner-Minlos theorem \cite[p.\ 410]{VakTarCho87} asserts that $\chi$ is the characteristic function of a unique probability measure on the dual space $\mathring{\mathfrak D'}$ of $\mathring{\mathfrak D}$. This probability measure is precisely the law of the centered Gaussian field with covariance $\mathsf p_{m}^{-1}$.
	
	Let $\phi\in \mathring{\mathfrak D}$. Then by Cauchy-Schwarz
	\begin{align*}
		|\scalar{\phi}{\psi_j}|\leq (\nu_j+m)^{-1}\|\mathsf p_{m} \phi\|_{L^2}.
	\end{align*}
	Since $\nu_j\sim cj$, the series
	\begin{align*}
		\sum_{j=1}^\infty (\nu_j+m)^{-3}<\infty
	\end{align*}	
	converges. Hence
	\begin{align*}
		|\mathring{\mathcal K}_{m}(\phi,\phi)|\leq\sum_{j=1}^\infty (\nu_j+m)^{-3} \|\mathsf p_{m}\phi\|_{L^2}^2
	\end{align*}
	and by polarization this proves $\mathring{\mathcal K}_{m}$ is a continuous bilinear form on $\mathring{\mathfrak D}\times \mathring{\mathfrak D}$.
	
	Therefore, $\chi$ is continuous on $\mathring{\mathfrak D}$ with $\chi(0)=1$ and by \cite[Proposition 2.4]{LodSheSunWat16} it is positive definite.
\end{proof}

The next theorem gives an explicit construction of $\tilde h_0$ by using the eigenbasis  $(\psi_j)_{j\in\N}$. In particular, it turns out that 
$\tilde h_0 \in  H^{-\varepsilon}$ $\mathbb{P}$-a.s.\ for every $\varepsilon>0$.
\begin{theorem}\label{thm: massive}
	Let $(\psi_j)_{j\in\N}$ be a complete $L^2$-orthonormal system consisting of eigenfunctions of~$\mathsf p$, each with corresponding eigenvalue~$\nu_j$. Let $(\xi_j)_{j\in \N}$ be a sequence  of independent and identically distributed standard Gaussian variables. For each~${\ell} \in \mathbb{N}$, we define the random test function
	\begin{align*}
		\tilde h_{\ell}(x)\coloneqq  \sum_{j=1}^{\ell} \frac{\psi_j(x)}{\sqrt{\nu_j+m}}\, \xi_j, \qquad x \in M \fstop
	\end{align*}
	Then, the covariance of the random field $\tilde h_{\ell}$ is given by:
	\begin{equation*}
		\tilde	k_{\ell}(x,y)\coloneqq  {\E}\Big[\tilde h_{\ell}(x)\, \tilde h_{\ell}(y)\Big]=\sum_{j=1}^{\ell} \frac{\psi_j(x)\, \psi_j(y)}{\nu_j+m}\comma \qquad x, y \in M \fstop
	\end{equation*}
	
	Moreover, for all $\varepsilon > 0$, the field~$\tilde h_{\ell}$, regarded as a random element of ${H}^{-\varepsilon}$, converges as~${\ell} \to \infty$ to a massive co-polyharmonic Gaussian field $\tilde h_0$ in $L^{2}(\mathbb{P})$ and $\mathbb{P}$-a.s.
	In particular, $\tilde h \in  H^{-\varepsilon}$ $\mathbb{P}$-a.s.. 
\end{theorem}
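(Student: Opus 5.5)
The plan is to treat the truncated series as a Gaussian random series in the Hilbert space $H^{-\varepsilon}$ and prove that it is Cauchy in $L^2(\mathbb P; H^{-\varepsilon})$. The covariance identity for $\tilde h_\ell$ is immediate: expand the product $\tilde h_\ell(x)\tilde h_\ell(y)$ and use $\mathbb E[\xi_j\xi_i]=\delta_{ij}$. Each $\tilde h_\ell$ is a finite combination of smooth eigenfunctions, so it lies in every $H^s$.

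For $\ell<\ell'$ we compute
\begin{align*}
\mathbb E\big\|\tilde h_{\ell'}-\tilde h_\ell\big\|_{H^{-\varepsilon}}^2=\sum_{j=\ell+1}^{\ell'}\frac{\|\psi_j\|_{H^{-\varepsilon}}^2}{\nu_j+m}=\sum_{j=\ell+1}^{\ell'}\frac{(1+\nu_j)^{-2\varepsilon/n}}{\nu_j+m}.
\end{align*}
Here we use that $\psi_j$ is an eigenfunction of $\mathsf p$, so $\|(1+\mathsf p)^{-\varepsilon/n}\psi_j\|_{L^2}=(1+\nu_j)^{-\varepsilon/n}$. By the Weyl asymptotic \eqref{eq: weyl}, $\nu_j\sim cj$, so the summand behaves like $j^{-1-2\varepsilon/n}$. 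This series is summable, so the tails tend to zero. Hence $\tilde h_\ell$ converges in $L^2(\mathbb P;H^{-\varepsilon})$ to a limit $\tilde h_0$, and in particular $\tilde h_0\in H^{-\varepsilon}$ almost surely.

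Almost sure convergence follows from the Itô--Nisio theorem. Alternatively, one can argue directly: the partial sums form an $H^{-\varepsilon}$-valued martingale with respect to $\sigma(\xi_1,\dots,\xi_\ell)$, bounded in $L^2$, so the Hilbert-space martingale convergence theorem (or Doob's maximal inequality combined with the summable tails) gives a.s. convergence to the same limit.

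To identify the limit, test against $\phi\in\mathring{\mathfrak D}$. Convergence in $L^2(\mathbb P;H^{-\varepsilon})$ implies $\langle\tilde h_\ell|\phi\rangle\to\langle\tilde h_0|\phi\rangle$ in $L^2(\mathbb P)$. These are centered Gaussians with covariances
\[
\sum_{j\le\ell}\frac{\langle\phi|\psi_j\rangle\langle\eta|\psi_j\rangle}{\nu_j+m}\to\mathring{\mathcal K}_m(\phi,\eta).
\]
Limits in $L^2$ of jointly Gaussian families are jointly Gaussian, so $\tilde h_0$ is centered Gaussian with covariance \eqref{eq:covariance-chp-massive-grounded}. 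By the uniqueness in distribution from the preceding theorem, it is the massive co-polyharmonic field.

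The only genuinely quantitative step is the summability estimate, and it rests entirely on the Weyl law. No step should be a real obstacle; the point needing most care is checking that the $H^{-\varepsilon}$ norm defined through $(1+\mathsf p)$ diagonalizes on the eigenbasis, which holds by construction.
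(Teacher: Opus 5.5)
Your proposal is correct and follows essentially the same route as the paper. The paper's proof simply defers to \cite[Proposition 3.9]{Del24}, which likewise computes $\E\|\tilde h_{\ell'}-\tilde h_\ell\|_{H^{-\varepsilon}}^2$ on the eigenbasis, uses Weyl's law \eqref{eq: weyl} to obtain summability of $(1+\nu_j)^{-2\varepsilon/n}(\nu_j+m)^{-1}$, and identifies the Gaussian limit through its covariance. A minor simplification: since the $\psi_j$ are orthogonal in $H^{-\varepsilon}$, the increment norm equals $\sum_{j=\ell+1}^{\ell'}(1+\nu_j)^{-2\varepsilon/n}(\nu_j+m)^{-1}\xi_j^2$ pathwise, so almost sure convergence follows directly from the almost sure finiteness of the full series, without invoking It\^o--Nisio.
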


\begin{proof}
	The proof goes along the lines of the proof in \cite[Proposition 3.9]{Del24}.
\end{proof}

Since, we employ the augmented field, we introduce the augmented massive field in the next definition.
\begin{definition} An \emph{augmented massive (co-polyharmonic) Gaussian field} $\tilde h$ on $(M,g)$ with mass $m>0$ is a centered Gaussian random variable $\tilde h$ on ${\mathfrak D}$ with covariance
	\begin{equation}
		\label{eq:covariance-chp-massive-grounded1}
		\mathbb{E} \Big[\langle \tilde h|\phi\rangle\cdot \langle \tilde h|\eta\rangle\Big]={\mathcal K}_{m}(\phi,\eta)
		\quad\qquad \forall \phi,\eta\in {\mathfrak D} ,
	\end{equation}
	where
	\begin{align*}
		{\mathcal K}_{m}(\phi,\eta)
		=&\sum_{j=1}^\infty \frac{\scalar{\phi}{\psi_j}\scalar{\eta}{\psi_j}}{\nu_j+m}+\sigma^2\scalar{\phi}{1}\scalar{\eta}{1}.
	\end{align*}

\end{definition}

In particular, if $\tilde h_0$ is an $m$-massive co-polyharmonic field and $\xi\sim\mathcal N(0,1)$ is independent, then
\begin{align*}
	\tilde h=\tilde h_0+\sigma\xi
\end{align*}
is an augmented $m$-massive co-polyharmonic field. Moreover, if $m=0$ the augmented $m$-massive field coincides with the augmented field.

\section{Proof of Theorem \ref{thm: main}}\label{sec: main}

\subsection{Analysis of partition function}
As a first step we consider the normalization or partition function of the Polyakov-Liouville measure ${\boldsymbol{\nu}}_{\kappa,\gamma}$ for $\kappa=0$. Recall that by Corollary \ref{cor: liouville} there exists a unique smooth solution $u=u_0$ to \eqref{eq: liouville} with $\int u\, d\vol=0$ and
$\Lambda=\Lambda_0=-Q(M)/\int e^{nu}\, d\vol$.  Let $\Theta=na_n/\gamma$ and $m=na_n\Lambda/\gamma^2$. Note that by Lemma \ref{lemma: finite} the partition function is finite and is given by
\begin{align*}
	Z_\gamma=\E\left[\exp(-\frac{na_n}{\gamma}\scalar{h}{Q}-\frac{na_n}{\gamma^2}\Lambda\mu^{\gamma h}(M))\right],
\end{align*}
where the expectation is taken with respect to the law $\P$ of the augmented field $h$.

\begin{proposition}\label{prop: pf}
	Let $0<\gamma<\sqrt{2n}$. Then
	\begin{align*}	Z_\gamma=e^{\gamma^{-2}F(\Lambda)}\E\left[\exp\left(-\frac{na_n\Lambda}{\gamma^2}\scalar{e^{nu}}{\mu^{\gamma h}-1-\gamma h}\right)\right],
	\end{align*}
	where
	\begin{align*}
		F(\Lambda)=-\frac{n^2}2{\mathfrak p}(u,u)-na_n\scalar{\Lambda e^{nu}+nQu}{1}.
	\end{align*}
\end{proposition}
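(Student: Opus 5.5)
The plan is to apply the Girsanov-type change of variables \eqref{eq: girsanov0} with the deterministic shift $\varphi:=\frac n\gamma u$, where $u=u_0$ is the solution from Corollary \ref{cor: liouville}. Since $u$ is smooth with $\langle u\rangle=0$, we have $\varphi\in\mathring H^{n/2}$. Hence $\tilde{\mathsf p}\varphi=\mathsf p\varphi$ and $\tilde{\mathfrak p}(\varphi,\varphi)=\frac{n^2}{\gamma^2}\mathfrak p(u,u)$. Writing $G(h):=\exp\big(-\frac{na_n}\gamma\scalar hQ-\frac{na_n\Lambda}{\gamma^2}\mu^{\gamma h}(M)\big)$, I would use \eqref{eq: girsanov0} in the equivalent reversed form
\[
\E[G(h)]=\E\Big[G(h+\varphi)\exp\Big(-\scalar{h}{\mathsf p\varphi}-\tfrac12\mathfrak p(\varphi,\varphi)\Big)\Big].
\]
This follows by applying the lemma to the test function $\tilde G(h):=G(h)\exp(-\scalar{h-\varphi}{\mathsf p\varphi}-\frac12\mathfrak p(\varphi,\varphi))$ and noting that the exponents cancel. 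Tonelli's theorem applies since every integrand involved is nonnegative.

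Next I would evaluate $G(h+\varphi)$ term by term.
\begin{itemize}
\item By the shift rule \eqref{e:liouville-cm-shift} with $\gamma\varphi=nu$, we get $\mu^{\gamma(h+\varphi)}(M)=\scalar{e^{nu}}{\mu^{\gamma h}}$.
\item The linear term splits as $\scalar{h+\varphi}{Q}=\scalar hQ+\frac n\gamma\scalar uQ$.
\item The Liouville equation \eqref{eq: liouville}, $\mathsf pu=-a_nQ-a_n\Lambda e^{nu}$, gives $-\scalar{h}{\mathsf p\varphi}=\frac{na_n}\gamma\scalar hQ+\frac{na_n\Lambda}\gamma\scalar h{e^{nu}}$. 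This pairing is well defined because $h\in H^{-\eps}$ and $e^{nu}$ is smooth.
\end{itemize}
The key cancellation is that the two terms $\pm\frac{na_n}\gamma\scalar hQ$ drop out. The total exponent becomes
\[
-\frac{na_n\Lambda}{\gamma^2}\scalar{e^{nu}}{\mu^{\gamma h}-\gamma h}-\frac{n^2a_n}{\gamma^2}\scalar uQ-\frac{n^2}{2\gamma^2}\mathfrak p(u,u).
\]

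Finally, I would add and subtract $\frac{na_n\Lambda}{\gamma^2}\scalar{e^{nu}}{1}$. This turns the random part into $-\frac{na_n\Lambda}{\gamma^2}\scalar{e^{nu}}{\mu^{\gamma h}-1-\gamma h}$. The deterministic remainder is $\gamma^{-2}\big(-\frac{n^2}2\mathfrak p(u,u)-na_n\scalar{\Lambda e^{nu}+nQu}{1}\big)=\gamma^{-2}F(\Lambda)$, and pulling it out of the expectation yields the claim.

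The main obstacle is the justification rather than the algebra. One point is that the Girsanov formula and the shift property \eqref{e:liouville-cm-shift} hold simultaneously $\P$-a.s. for this specific $\varphi$; this is fine because $\varphi$ is a fixed element of $\mathring H^{n/2}$. The other point is that the exponential of the random pairing $\scalar h{e^{nu}}$ is integrable, so that the identity is not of the form $\infty=\infty$. Finiteness of the left side is Lemma \ref{lemma: finite}, and the identity of nonnegative integrands then transfers this finiteness to the right side.
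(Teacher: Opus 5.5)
Your proposal is correct and follows essentially the same route as the paper. Both use a Cameron--Martin shift by $\frac n\gamma u$ via \eqref{eq: girsanov0}, the shift rule \eqref{e:liouville-cm-shift} to turn $\mu^{\gamma(h+\varphi)}(M)$ into $\scalar{e^{nu}}{\mu^{\gamma h}}$, and the Liouville equation to cancel the $\scalar{h}{Q}$ term; the only difference is cosmetic: the paper inserts the compensator $\exp(\pm\frac{na_n\Lambda}{\gamma^2}\scalar{e^{nu}}{1+\gamma h^u})$, applies the density in the forward direction and recovers $F(\Lambda)$ by a second use of the Liouville equation, whereas your inverse-density form reads off the prefactor directly.
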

\begin{proof}
	\emph{Step 1: Insert the solution $u$.} We define
	\begin{align*}
		h^u:=h-\frac{n}{\gamma}u.
	\end{align*}
	Then
	\begin{align*} Z_\gamma=&\E\Big[\exp(-\frac{na_n}{\gamma}\scalar{h}{Q}\exp\Big(-\frac{na_n\Lambda}{\gamma^2}\scalar{e^{nu}}{1+\gamma h^u}\Big)\\
		&\quad \cdot\exp\Big(-\frac{na_n\Lambda}{\gamma^2}\mu^{\gamma h}(M)+\frac{na_n\Lambda}{\gamma^2}\scalar{e^{nu}}{1+\gamma h^u}\Big)\Big]\\
		=&\exp\Big(-\frac{na_n\Lambda}{\gamma^2}\scalar{e^{nu}}{1-nu}\Big)\\ &\cdot\E\Big[\exp(-\frac{na_n}{\gamma}\scalar{h}{Q}\exp\Big(-\frac{na_n\Lambda}{\gamma}\scalar{e^{nu}}{h}\Big)\\
		&\quad\cdot\exp\Big(-\frac{na_n\Lambda}{\gamma^2}\mu^{\gamma h}(M)+\frac{na_n\Lambda}{\gamma^2}\scalar{e^{nu}}{1+\gamma h^u}\Big)\Big]\\
	\end{align*}
	\emph{Step 2: Employ Girsanov.} Since $u$ solves \eqref{eq: liouville}, the first two terms in the expectation can be summarized as
	\begin{align*}
		\exp\left(-\frac{na_n}\gamma\scalar{h}{Q+\Lambda e^{nu}}\right)=\exp\left(\frac{n}\gamma\scalar{h}{\mathsf p u}\right).
	\end{align*}
	We recall that $\langle u\rangle=0$ and hence the expression
	\begin{align*}
		\exp\left(\frac{n}\gamma\scalar{h}{\mathsf p u}-\frac{n^2}{2\gamma^2}\mathfrak p(u,u)\right)
	\end{align*}
	is by \eqref{eq: girsanov0} the density of the law of the shifted field $
	h+\frac{n}\gamma u$
	with respect to the law of $h$. 
	With this observation we rewrite
	\begin{align*}
		Z_\gamma=&\exp\Big(-\frac{na_n\Lambda}{\gamma^2}\scalar{e^{nu}}{1-nu}+\frac{n^2}{2\gamma^2}\mathfrak p(u,u)\Big)\\
		&\cdot\E\Big[\exp\Big(-\frac{na_n}{\gamma}\scalar{h}{Q+\Lambda e^{nu}}-\frac{n^2}{2\gamma^2}\mathfrak p(u,u)\Big)\\
		&\quad\cdot\exp\Big(-\frac{na_n\Lambda}{\gamma^2}\mu^{\gamma h}(M)+\frac{na_n\Lambda}{\gamma^2}\scalar{e^{nu}}{1+\gamma h^u}\Big)\Big]\\
		=&\exp\Big(-\frac{na_n\Lambda}{\gamma^2}\scalar{e^{nu}}{1-nu}+\frac{n^2}{2\gamma^2}\mathfrak p(u,u)\Big)\\
		&\cdot\E\Big[\exp\Big(-\frac{na_n\Lambda}{\gamma^2}\mu^{\gamma h+nu}(M)+\frac{na_n\Lambda}{\gamma^2}\scalar{e^{nu}}{1+\gamma h}\Big)\Big]\\
		=&\exp\Big(-\frac{na_n\Lambda}{\gamma^2}\scalar{e^{nu}}{1-nu}+\frac{n^2}{2\gamma^2}\mathfrak p(u,u)\Big)\\
		&\cdot\E\Big[\exp\Big(-\frac{na_n\Lambda}{\gamma^2}\scalar{e^{nu}}{\mu^{\gamma h}-1-\gamma h}\Big)\Big],
	\end{align*} 
	where we used \eqref{e:liouville-cm-shift}.
	
	\emph{Step 4: Compute the prefactor.} We compute
	\begin{align*}
		\scalar{\Lambda e^{nu}}{1-nu}=&\scalar{\Lambda e^{nu}+nQu}{1}-\scalar{\Lambda e^{nu}+Q}{nu}\\
		=&\frac{n}{a_n}\mathfrak p(u,u)+\scalar{\Lambda e^{nu}+nQu}{1}.
	\end{align*}
	Consequently,
	\begin{align*}
		&-\frac{na_n}{\gamma^2}\scalar{\Lambda e^{nu}}{1-nu}+\frac{n^2}{2\gamma^2}\mathfrak p(u,u)	\\
		&=-\frac{n^2}{2\gamma^2}\mathfrak p(u,u)-\frac{na_n}{\gamma^2}\scalar{\Lambda e^{nu}+nQu}{1}\\
		&=\gamma^{-2}F(\Lambda),
	\end{align*}
	which proves the claim.
\end{proof}

\subsection{Partition function as $\gamma\to0$}

In the following we want to study the behavior of
$$e^{-\gamma^{-2}F(\Lambda)}Z_\gamma=\E\Big[\exp\Big(-\frac{na_n\Lambda}{\gamma^2}\scalar{e^{nu}}{\mu^{\gamma h}-1-\gamma h}\Big)\Big]$$
as $\gamma\to0$.	For this we prove the following proposition.
\begin{proposition}\label{prop: convergence}
	For all positive bounded functions $\varphi\colon M\to\R$ we have
	\begin{align*}
		\lim_{\gamma\to0}\E[\exp(-\frac1{\gamma^2}\scalar{\varphi}{\mu^{\gamma h}-1-\gamma h})]=\E[\exp(-\frac12\scalar{\varphi}{\wick{h^2}})].
	\end{align*}
\end{proposition}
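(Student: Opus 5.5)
The plan is to use the Wick expansion of Proposition~\ref{prop: approxm}. Set
\[
X_\gamma:=\frac1{\gamma^2}\scalar{\varphi}{\mu^{\gamma h}-1-\gamma h}.
\]
Since $\scalar{\wick{h^0}}{\varphi}=\scalar{1}{\varphi}$ and $\scalar{\wick{h^1}}{\varphi}=\scalar{h}{\varphi}$, Proposition~\ref{prop: approxm} gives, for $0<\gamma<\sqrt{2n}$,
\[
X_\gamma=\frac12\scalar{\wick{h^2}}{\varphi}+R_\gamma,\qquad R_\gamma:=\sum_{\alpha\ge3}\frac{\gamma^{\alpha-2}}{\alpha!}\scalar{\wick{h^\alpha}}{\varphi},
\]
with convergence in $L^2(\Omega)$.

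\emph{Step 1: the remainder is small.} By the orthogonality relation \eqref{eq: wick}, the terms of $R_\gamma$ are orthogonal, so
\[
\E[R_\gamma^2]=\sum_{\alpha\ge3}\frac{\gamma^{2\alpha-4}}{\alpha!}\iint k(x,y)^\alpha\varphi(x)\varphi(y)\,d\vol\,d\vol .
\]
This uses $k\ge0$, and $\varphi\ge0$ makes every term nonnegative. The bound $k\le\log\frac1{d}+C$ from \eqref{eq: log} gives
\[
\E[R_\gamma^2]\le\gamma^2\,\|\varphi\|_\infty^2\iint e^{\gamma^2k}\,d\vol\,d\vol=O(\gamma^2),
\]
uniformly for $\gamma\le\gamma_0$ small. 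Hence $X_\gamma\to\frac12\scalar{\wick{h^2}}{\varphi}$ in $L^2(\Omega)$, and therefore in probability. Because $\exp(-\cdot)$ is continuous, $e^{-X_\gamma}\to e^{-\frac12\scalar{\wick{h^2}}{\varphi}}$ in probability.

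\emph{Step 2: uniform integrability.} This is the main obstacle. $X_\gamma$ is not bounded below, since $\mu^{\gamma h}-1-\gamma h$ has no sign for the distributional field $h$. I would instead show $\sup_{\gamma\le\gamma_0}\E[e^{-pX_\gamma}]<\infty$ for some $p>1$. First pass to the white-noise truncation $h_\ell$, where pointwise $e^{t}-1-t\ge0$ gives
\[
\mu^{\gamma h_\ell}-1-\gamma h_\ell\ge -\frac{\gamma^2}{2}k_\ell(x,x)\,e^{\gamma h_\ell-\gamma^2k_\ell/2}+\dots
\]
More cleanly, write $e^{\gamma h_\ell-\frac{\gamma^2}2k_\ell(x,x)}-1-\gamma h_\ell\ge -\frac{\gamma^2}{2}k_\ell(x,x)$, because $e^{a-b}-1-a\ge e^{-b}-1\ge -b$ after minimising over $a$. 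Together with $k_\ell(x,x)\le C+C\log\ell$ from \eqref{eq: domination}, this gives
\[
X^{(\ell)}_\gamma\ge -\frac12\|\varphi\|_1(C+C\log\ell).
\]
Now choose $\ell=\ell(\gamma)\to\infty$ slowly and split $X_\gamma=X_\gamma^{(\ell)}+(X_\gamma-X^{(\ell)}_\gamma)$. Control the exponential moments of the difference using Proposition~\ref{prop: wn decomp}(v),(vi) and the independence of $h-h_\ell$ from $h_\ell$. The estimate \eqref{eq: white-noise} is the key input for bounding $\frac1{\gamma^2}\scalar{\varphi}{\mu^{\gamma h}-\mu^{\gamma h_\ell}}$ conditionally on $h_\ell$. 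The difference $\frac1\gamma\scalar{h-h_\ell}{\varphi}$ is Gaussian with variance $O(\gamma^{-2}\ell^{-1})$, so it stays bounded in exponential moments if $\ell\gtrsim\gamma^{-2}$. I expect the difficulty to be reconciling this with the $\log\ell$ lower bound, which then costs a factor $\ell^{C\|\varphi\|_1 p}$.

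If this direct route fails, the fallback is the positive-correlation structure. Conditionally on $h_\ell$, I would use Jensen or Kahane convexity to compare $\E[e^{-pX_\gamma}]$ with the Gaussian quantity $\E[e^{-\frac p2\scalar{\varphi}{\wick{h^2}}}]$. That quantity is finite for small $p\|\varphi\|_\infty$ (via the trace-class estimate on $\mathsf k$), and one can then reduce the general case to small $\varphi$ by Hölder.

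\emph{Step 3: conclusion.} Convergence in probability (Step 1) plus uniform integrability (Step 2) give $L^1$ convergence of $e^{-X_\gamma}$, which yields the claim. The limit is finite because $\E\bigl[e^{-\frac12\scalar{\varphi}{\wick{h^2}}}\bigr]$ equals a Fredholm determinant $\det(1+\varphi^{1/2}\mathsf k\varphi^{1/2})^{-1/2}$ times the Wick renormalisation factor $e^{\frac12\mathrm{tr}}$. Since $\varphi\ge0$ and $\mathsf k\ge0$, this determinant is well defined in the Carleman sense for the Hilbert--Schmidt operator with kernel $\varphi^{1/2}(x)k(x,y)\varphi^{1/2}(y)$.
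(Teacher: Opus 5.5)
Your Steps 1 and 3 follow the paper. It also writes $H_\gamma=\gamma^{-2}\scalar{\mu^{\gamma h}-1-\gamma h-\frac{\gamma^2}{2}\wick{h^2}}{\varphi}$ as the Wick tail $\sum_{\alpha\ge3}$ and gets $\E[H_\gamma^2]=O(\gamma^2)$ from \eqref{eq: wick}. It then passes to the exponentials by the mean value theorem and Cauchy--Schwarz, which requires exactly the uniform exponential moment bound you call uniform integrability (Lemma \ref{lem: bounded}). There are two harmless slips. The Wick tail is bounded by $\frac{\gamma^2}{6}k^3e^{\gamma^2k}$, not $\gamma^2e^{\gamma^2k}$, and this is still integrable. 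Also, $e^{a-b}-1-a\ge-b$ is correct, but the intermediate inequality $\ge e^{-b}-1$ is false at $a=b$. The genuine gap is Step 2: it is the real content of the proposition, and you leave it open.

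Your direct route fails for the reason you anticipate. The deterministic bound $X^{(\ell)}_\gamma\ge-C\|\varphi\|_1(1+\log\ell)$ costs a factor $\ell^{Cp\|\varphi\|_1}$ on the whole probability space. Meanwhile the Gaussian term $\gamma^{-1}\scalar{h-h_\ell}{\varphi}$ and the chaos difference $\gamma^{-2}(\mu^{\gamma h_\ell}-\mu^{\gamma h})(\varphi)$ force $\ell$ to grow at least polynomially in $1/\gamma$. So no choice of $\ell(\gamma)$ gives a uniform bound.

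The fallback does not repair this. Conditional Jensen for the convex map $x\mapsto e^{-px}$ bounds $\E[e^{-pX_\gamma}\mid\mathcal F_\ell]$ from below, not from above. Kahane's inequality compares convex functionals of two chaos measures with ordered covariances, so it cannot relate the mixed functional $\gamma^{-2}\mu^{\gamma h}(\varphi)-\gamma^{-1}\scalar{h}{\varphi}$ to $\frac12\scalar{\varphi}{\wick{h^2}}$.

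What is missing are the two ideas behind Lemmas \ref{lem: bounded} and \ref{lemma: set B}.
\begin{itemize}
\item \emph{Truncated estimate.} The crude bound $\frac12k_\ell(x,x)\lesssim\log\ell$ is paid only on an event $\mathcal A$ of probability at most $\gamma^{-3}e^{-c(\log\ell)^3}$, namely where the region $\{|h_\ell|>(\log\ell)^2\}$ contributes. On $\mathcal A^c$ a third-order Taylor bound gives $X^{(\ell)}_\gamma\ge\frac12\scalar{\varphi}{\wick{h_\ell^2}}-\gamma-\frac{\gamma}{6}(\log\ell)^6$. With $\ell(\gamma)=\lceil e^{\gamma^{-1/6}}\rceil$ all error terms stay bounded, and $\sup_\ell\E[e^{-\frac12\scalar{\varphi}{\wick{h_\ell^2}}}]<\infty$ closes the estimate.
\item \emph{Removing the cutoff.} This needs a concentration bound $\P[\mathcal B]\le e^{c\gamma^{-2}\log\gamma}$ for $\mathcal B=\{\scalar{h-h_\ell}{\varphi}\ge\gamma^2\}\cup\{\mu^{\gamma h_\ell}(\varphi)-\mu^{\gamma h}(\varphi)\ge\gamma^3\}$. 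It must be strong enough to beat the trivial bound $e^{C\gamma^{-2}}$ available on $\mathcal B$; on $\mathcal B^c$ the correction terms cost at most $e^{2\gamma}$. For the chaos part, the paper uses the nonnegative correlations \eqref{eq: pos corr} of the white-noise increments and the Gaussian FKG inequality. A Gronwall argument then gives $\E[e^{t(\mu^{\gamma h_\ell}-\mu^{\gamma h})(\varphi)}\mid\mathcal F_\ell]\le e^{t^2Z_\ell}$.
\end{itemize}
Without substitutes for these two steps, your Step 3 has nothing to apply to.
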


\begin{proof}
	Following the proof of \cite[Lemma 3.6]{lacoin2017semiclassical} we define
	\begin{align*}
		H_\gamma =\frac1{\gamma^2}\scalar{\mu^{\gamma h}-1-\gamma h-\frac{\gamma^2}2\wick{h^2}}{\varphi}
	\end{align*}
	and show that it converges towards $0$ in $L^2(\P)$ as $\gamma\to0$. By virtue of Proposition \ref{prop: approxm} we write
	\begin{align*}
		H_\gamma =&\frac1{\gamma^2}\left(\sum_{\alpha=0}^\infty\frac{\gamma^\alpha}{\alpha!}\scalar{\wick{h^\alpha}}{\varphi} -\scalar{1}{\varphi}-\scalar{\gamma h}{\varphi}-\scalar{\frac{\gamma^2}2\wick{h^2}}{\varphi}\right)\\
		=&\frac1{\gamma^2}\sum_{\alpha=3}^\infty\frac{\gamma^\alpha}{\alpha!}\scalar{\wick{h^\alpha}}{\varphi}.
	\end{align*}
	Obviously we have that $\E[H_\gamma]=0$ by definition of the Wick powers.
	Concerning its variance we simply compute
	\begin{align*}
		\E[(\frac1{\gamma^2}\sum_{\alpha=3}^\infty\frac{\gamma^\alpha}{\alpha!}\scalar{\wick{h^\alpha}}{\varphi})^2]
		=&\frac1{\gamma^4}\sum_{\alpha,\beta\geq 3}\frac{\gamma^{\alpha}\gamma^\beta}{\alpha!\beta!}\E[\scalar{\wick{h^\alpha}}{\varphi}\scalar{\wick{h^\beta}}{\varphi}]\\
		=&\frac1{\gamma^4}\sum_{\alpha\geq 3}\frac{\gamma^{2\alpha}}{\alpha!}\iint k(x,y)^\alpha \varphi(x)\varphi(y)\, d\vol(x)\, d\vol(y),
	\end{align*}
	where we used Proposition \ref{prop: approxm}, Proposition \ref{prop: approxwick} and \eqref{eq: wick} in the second line. Due to \eqref{eq: log}, the last expression is of order $O(\gamma^2)$ since
	\begin{align*}
		&\frac1{\gamma^4}\sum_{\alpha\geq 3}\frac{\gamma^{2\alpha}}{\alpha!}\iint k(x,y)^\alpha \varphi(x)\varphi(y)\, d\vol(x)\, d\vol(y)\\
		&=\frac{\gamma^2}6\iint k(x,y)^3 \varphi(x)\varphi(y)\, d\vol(x)\, d\vol(y)+O(\gamma^4)
	\end{align*} 
	and we obtain that $H_\gamma\to0$ in $L^2(\P)$.  
	
	In order to prove convergence of $\exp(-\frac1{\gamma^2}\scalar{\varphi}{\mu^{\gamma h}-1-\gamma h})$ in $L^1$ we apply the mean value theorem to the exponential function and Cauchy-Schwarz which leads to
	\begin{align*}
		&\E[|e^{-\frac1{\gamma^2}\scalar{\varphi}{\mu^{\gamma h}-1-\gamma h}}-e^{-\frac12\scalar{\varphi}{\wick{h^2}}}|]\\
		&\leq 	\E[|e^{-\frac1{\gamma^2}\scalar{\varphi}{\mu^{\gamma h}-1-\gamma h}}+e^{-\frac12\scalar{\varphi}{\wick{h^2}}}|^2]^{1/2}	\E[|H_\gamma|^2]^{1/2}.
	\end{align*}
	Since the first expectation on the right hand side is uniformly bounded in $\gamma$ by Lemma \ref{lem: wickfinite} and Lemma \ref{lem: bounded} below and $H_\gamma\to0$ in $L^2(\P)$, we conclude the argument. 
\end{proof}

\begin{lemma}\label{lem: wickfinite}
	Assume $u$ is smooth. 
	\begin{enumerate}[$(i)$]
		\item
		For all $c\geq0$
		\begin{align*}
			\tilde Z_u=\E\Big[\exp\Big(-c\scalar{e^{nu}}{ \wick{h^2}}\Big)\Big]<\infty.
		\end{align*}
		\item 
		Moreover under the measure 
		$$ \tilde\P_u:=\tilde Z_u^{-1}\exp\Big(-c\scalar{e^{nu}}{ \wick{h^2}}\Big)\P,$$ 
		the field $h$ is the augmented massive field with mass $2c$ under the metric $\hat g$.
	\end{enumerate}
\end{lemma}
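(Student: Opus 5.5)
We would treat $\scalar{e^{nu}}{\wick{h^2}}$ as a Gaussian quadratic form and diagonalize it. Let $\mathcal K$ be the covariance operator of $h$ (kernel $k=k_0+\sigma^2$) and $V$ multiplication by $e^{nu}$. Then formally $\scalar{e^{nu}}{\wick{h^2}}=\scalar{h}{Vh}-\E\scalar{h}{Vh}$. The relevant object is the operator $T:=\mathcal K^{1/2}V\mathcal K^{1/2}$ on $L^2$. Because $\mathcal K$ maps $H^{-n/2}$ to $H^{n/2}$ and $V$ is smooth, $T$ is positive. It is also Hilbert--Schmidt: $\|T\|_{HS}^2=\iint k(x,y)^2e^{nu(x)}e^{nu(y)}\,d\vol\,d\vol<\infty$ by \eqref{eq: log}. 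It is not trace class, which is exactly why the Wick renormalisation is needed.

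Writing $h=\mathcal K^{1/2}W$ and expanding in an eigenbasis $(\lambda_i,e_i)$ of $T$, we get $\scalar{e^{nu}}{\wick{h^2}}=\sum_i\lambda_i(\zeta_i^2-1)$ with i.i.d. standard normals $\zeta_i$. The sum converges in $L^2(\Omega)$ because $\sum\lambda_i^2<\infty$. To justify this identification of the $L^2$-limit object from Proposition \ref{prop: approxwick}, we would work first with the truncations $h_\ell$, or with the spectral truncations $\sum_{j\le N}\psi_j\xi_j/\sqrt{\nu_j}+\sigma\xi$, and pass to the limit using the $L^2$ convergence in Proposition \ref{prop: approxwick}.

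For (i), independence gives
\[\E\big[e^{-c\sum_i\lambda_i(\zeta_i^2-1)}\big]=\prod_i\frac{e^{c\lambda_i}}{\sqrt{1+2c\lambda_i}}.\]
Since $\lambda_i\ge0$ and $c\ge0$, each factor is finite and positive. Moreover $\log$ of each factor equals $c\lambda_i-\tfrac12\log(1+2c\lambda_i)\le c^2\lambda_i^2$, so the product converges to the Carleman--Fredholm determinant $\det_2(1+2cT)^{-1/2}<\infty$. This is the finiteness claim. To make it rigorous at the level of limits, we would apply the identity to finite truncations and use Fatou's lemma along an a.s. convergent subsequence. Uniform boundedness then follows because the truncated Hilbert--Schmidt norms are bounded by $\|T\|_{HS}$.

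For (ii), compute the characteristic functional under $\tilde\P_u$. For $\phi\in\mathfrak D$, put $a=\mathcal K^{1/2}\phi$. The same diagonalization, now with a linear term, gives
\[\tilde\E\big[e^{i\scalar{h}{\phi}}\big]=\exp\big(-\tfrac12\scalar{a}{(1+2cT)^{-1}a}\big).\]
So $h$ is centred Gaussian under $\tilde\P_u$ with covariance
\[\mathcal K^{1/2}(1+2cT)^{-1}\mathcal K^{1/2}=(\mathcal K^{-1}+2cV)^{-1}=(\tilde{\mathsf p}+2ce^{nu})^{-1}.\]
It remains to recognise this operator geometrically. By \eqref{eq: gjms}, $\mathsf p_{\hat g}=e^{-nu}\mathsf p_g$ and $d\vol_{\hat g}=e^{nu}d\vol_g$. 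Hence $\mathsf p_g+2ce^{nu}=e^{nu}(\mathsf p_{\hat g}+2c)$, and the pairing $\scalar{\phi}{(\mathsf p_g+2ce^{nu})^{-1}\eta}_g$ becomes $\scalar{\phi_{\hat g}}{(\mathsf p_{\hat g}+2c)^{-1}\eta_{\hat g}}_{\hat g}$ once test functions are read as densities with respect to $\vol_{\hat g}$. This is the massive covariance with mass $2c$ on $(M,\hat g)$. The rank-one term $\frac1{\sigma^2|M|}\langle\cdot\rangle$ in $\tilde{\mathsf p}$ must then be handled via a Sherman--Morrison computation, checking that it produces the augmentation $\sigma^2\scalar{\phi}{1}\scalar{\eta}{1}$ in the definition of the augmented massive field. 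This constant-mode bookkeeping, together with the grounding conventions ($\mathring{\mathsf p}_m^{-1}$ and volume normalisation under $\hat g$), is where I expect the main difficulty. If exact agreement fails, I would prove the statement up to that identification of the constant mode.
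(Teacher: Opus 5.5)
Your route differs from the paper's. The paper never forms $T=\mathcal K^{1/2}V\mathcal K^{1/2}$. Instead it uses $\mathsf p_{\hat g}=e^{-nu}\mathsf p$ and expands $\tilde h=\sum_{j\ge1}\psi_j\xi_j/\sqrt{\nu_j}$ in an $L^2(\vol_{\hat g})$-orthonormal eigenbasis of $\mathsf p_{\hat g}$, so that $\scalar{e^{nu}}{\wick{\tilde h^2}}=\sum_j\nu_j^{-1}(\xi_j^2-1)$ is diagonal at once. Part (i) then reduces to $\sum_j\nu_j^{-2}<\infty$ (Weyl's law). Part (ii) reduces to explicit one-dimensional Gaussian integrals, which give the covariance $\sum_{j\ge1}\psi_j(x)\psi_j(y)/(\nu_j+2c)$. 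This is shorter, but it computes only with the $\hat g$-grounded field. The constant mode is put back afterwards through the asserted identity $e^{nu}h\overset{d}{=}\tilde h+\sigma\xi e^{nu}$ with $\xi$ independent. Your diagonalisation of the full augmented covariance needs no eigenfunctions and keeps the constant mode and its coupling to the rest. Your part (i) is complete: positivity of $T$, $\|T\|_{HS}^2=\iint k(x,y)^2e^{nu(x)}e^{nu(y)}\,d\vol\,d\vol<\infty$, the bound $c\lambda-\frac12\log(1+2c\lambda)\le c^2\lambda^2$, Fatou, and $0\le k_\ell\le k$ for the truncations. Your covariance $(\tilde{\mathsf p}+2ce^{nu})^{-1}$ in (ii) is the correct law under $\tilde\P_u$.

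The step you postpone, however, cannot be completed, and the obstruction lies in the statement rather than in your method. In $\tilde{\mathsf p}+2ce^{nu}$ the mass term also acts on constants, whereas $\mathcal K_{2c}$ grounds before applying the mass and leaves the augmentation $\sigma^2\scalar{\phi}{1}\scalar{\eta}{1}$ untouched. This already shows for $u=0$. With $\psi_0=|M|^{-1/2}$ and $\zeta_j=\scalar{W}{\psi_j}$ one has $\scalar{h}{1}=\sigma|M|\zeta_0$ and $\scalar{1}{\wick{h^2}}=\sum_{j\ge1}\nu_j^{-1}(\zeta_j^2-1)+\sigma^2|M|(\zeta_0^2-1)$, hence
\[
\tilde\E_u\big[\scalar{h}{1}^2\big]=\frac{\sigma^2|M|^2}{1+2c\sigma^2|M|}<\sigma^2|M|^2=\mathcal K_{2c}(1,1)
\]
for every $c>0$. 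Here $\sigma>0$ is forced, since $g_0(x,\cdot)$ has integral zero and so takes negative values.

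In general your precision form is $\mathfrak p(\phi,\phi)+2c\int\phi^2\,d\vol_{\hat g}+\sigma^{-2}\langle\phi\rangle^2$. That of the augmented massive field on $(M,\hat g)$ is $\mathfrak p(\phi,\phi)+2c\int(\phi-\langle\phi\rangle_{\hat g})^2\,d\vol_{\hat g}+\sigma^{-2}\langle\phi\rangle_{\hat g}^2$, where $\langle\cdot\rangle_{\hat g}$ is the $\vol_{\hat g}$-average. They differ by a finite-rank term. By Sherman--Morrison, for non-constant $u$ this term even perturbs the covariances of $\hat g$-grounded test functions.

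So what is provable is exactly your hedged version: under $\tilde\P_u$, $h$ is centred Gaussian with covariance $(\tilde{\mathsf p}+2ce^{nu})^{-1}$, i.e.\ massive with mass $2c$ on $(M,\hat g)$ up to a finite-rank correction in the constant direction. The paper's proof reaches the stated form only through the identity $e^{nu}h\overset{d}{=}\tilde h+\sigma\xi e^{nu}$. That identity ignores the effect of the tilt on the constant mode. For non-constant $u$ it fails already under $\P$: the variance of $\scalar{h}{e^{nu}}$ is $\mathcal K_0(e^{nu},e^{nu})+\sigma^2(\int e^{nu}\,d\vol)^2$, not $\sigma^2(\int e^{nu}\,d\vol)^2$.
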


\begin{proof}
	$(i)$ Let $\hat g=e^{2u}g$ and note that $\mathsf p_{\hat g}=e^{-nu}\mathsf p$ for all $u\in C^\infty(M)$. Let $\nu_j$ be eigenvalues of $\mathsf p_{\hat g}$ and $\psi_j$ be an ONB associated to $\nu_j$ in $L^2(\vol_{\hat g})$ and define
	\begin{align*}
		\tilde h{:=}\sum_{j=1}^\infty\frac1{\sqrt{\nu_j}}\psi_j\xi_j,
	\end{align*}
	where $(\xi_j)_{j=1}^\infty$ are iid standard normally distributed random variables. With this we compute
	\begin{align*}
		\scalar{e^{nu}}{ \wick{\tilde h^2}}=&\lim_{l\to\infty}\scalar{e^{nu}}{ \Big(\sum_{j=1}^{\ell}\frac1{\sqrt{\nu_j}}\psi_j\xi_j\Big)^2-\sum_{j=1}^{\ell}\frac1{\nu_j}\psi_j^2}\\
		=&\sum_{j=1}^\infty\frac1{\nu_j}(\xi_j^2-1).
	\end{align*}
	Inserting this into $\tilde Z_u$
	\begin{align*}
		\tilde Z_u=\prod_j\E[\exp(-\frac{c}{\nu_j}\xi_j^2)]e^{c/\nu_j}=\prod_j\sqrt{\frac{\nu_j}{\nu_j+2c}}e^{c/\nu_j}<\infty.
	\end{align*}
	The last equality holds because $\xi_j^2\sim \chi_1^2$. The expression is finite for any $c>-\nu_1/2$, which can be checked by considering $\sum_j-\frac12\log(1+\frac{2c}{\nu_j})+\frac{c}{\nu_j}$ and using that $\sum\frac1{\nu_j^2}<\infty$ by Weyl's law \eqref{eq: weyl}.
	
	$(ii)$ The field $\tilde h$ is Gaussian under $\tilde\P_u$ since for the Laplace transform
	\begin{align*}
		\tilde \E_u[e^{t\tilde h(x)}]=\tilde Z_c^{-1}\prod_j\E\Big[\exp\Big(t\frac1{\sqrt{\nu_j}}\psi_j(x)\xi_j\Big)\exp\Big(-\frac{c}{\nu_j}\xi_j^2\Big)\Big]e^{\frac{c}{\nu_j}},
	\end{align*}
	where the expectation can be explicitly computed
	\begin{align*}
		\E\Big[\exp\Big(t\frac1{\sqrt{\nu_j}}\psi_j(x)\xi_j\Big)\exp\Big(-\frac{c}{\nu_j}\xi_j^2\Big)\Big]=\frac1{\sqrt{1+\frac{2c}{\nu_j}}}\exp\left(\frac{(\frac{\psi_j(x)}{\sqrt{\nu_j}})^2t^2}{2(1+\frac{2c}{\nu_j})}\right),
	\end{align*}
	which is the Laplace transform of a shifted Gaussian random variable.
	
	So it remains to compute the expectation and covariance. The expectation clearly vanishes by symmetry
	\begin{align*}
		\tilde \E_u[\tilde h(x)]=\tilde Z_u^{-1}\sum_k\frac{\psi_k(x)}{\sqrt{\nu_k}}\E[\xi_k\exp(-c\sum_j(\xi_j^2-1)/\nu_j)]=0.
	\end{align*}
	
	For the covariance
	\begin{align*}
		\tilde\E_u[\tilde h(x)\tilde h(y)]=\tilde Z_u^{-1}\sum_j\frac{\psi_j(x)\psi_j(y)}{\nu_j}\E[\xi_j^2 \exp(-c\sum_k(\xi_k^2-1)/\nu_k)].
	\end{align*}
	The expectation is 
	\begin{align*}
		&\E[\xi_j^2 \exp(-c\sum_k(\xi_k^2-1)/\nu_k)]\\
		&=\E[\xi_j^2 \exp(-c(\xi_j^2-1)/\nu_j)]\prod_{k\neq j}\E[\exp(-c\sum_k(\xi_k^2-1)/\nu_k)].
	\end{align*}
	We note that for the first term
	\begin{align*}
		\E[\xi_j^2 \exp(-c(\xi_j^2-1)/\nu_j)]=e^{c/\nu_j}\left(\frac{\nu_j+2c}{\nu_j}\right)^{-3/2}
	\end{align*}
	by direct computation and for the second
	\begin{align*}
		\prod_{k\neq j}\E[\exp(-c\sum_k(\xi_k^2-1)/\nu_k)]=\tilde Z_u\sqrt{\frac{\nu_j+2c}{\nu_j}}e^{-c/\nu_j}.
	\end{align*}
	Together this gives us for the covariance
	\begin{align*}
		\tilde\E_u[\tilde h(x)\tilde h(y)]=\sum_{j=1}^\infty\frac{\psi_j(x)\psi_j(y)}{\nu_j+2c},
	\end{align*}
	i.e. $\tilde h$ is under $\tilde\P_u$ a centered Gaussian with covariance given by $(\mathring{\mathsf p}_{\hat g,2c})^{-1}$ by \eqref{eq:covariance-chp-massive-grounded}. 
	
	By \cite[Proposition 2.22(ii)]{Del24} and Lemma \ref{lem: h and h*} we have
	\begin{align*}
		e^{nu}h\overset{d}{=}\tilde h+\sigma\xi e^{nu},
	\end{align*}
	where $\scalar{e^{nu}h}{\phi}=\scalar{h}{e^{nu}\phi}$ denotes the dual pairing with respect to the metric $\hat g$ and $\xi$ is a standard normal distributed random variable independent from $\tilde h$. In particular, by the computation for $\tilde h$, we have
	\begin{align*}
		\tilde \E_u[\scalar{e^{nu}h}{\phi}^2]=\sum_{j=1}^\infty\frac{\scalar{\phi}{\psi_j}_{\hat g}^2}{\nu_j+2c}+\sigma^2\scalar{\phi}{1}_{\hat g}^2.
	\end{align*}
	Consequently, $h$ is then the augmented massive field  with mass $2c$ under the metric $\hat g$ by \eqref{eq:covariance-chp-massive-grounded1}.

\end{proof}

\begin{lemma}\label{lem: bounded}
	For all positive bounded functions $\varphi\colon M\to\R$ we have
	\begin{align*}
		\sup_{0<\gamma\leq 1}\E[\exp(-\frac1{\gamma^2}\scalar{\varphi}{\mu^{\gamma h}-1-\gamma h})]<\infty.
	\end{align*}
\end{lemma}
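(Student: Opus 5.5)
The plan is to reduce to a smooth approximation via the white noise truncation of Section~\ref{sec: wn trunc}, bound the smooth part pointwise using convexity of the exponential, and control the remainder by Gaussian estimates. Set $X_\gamma:=-\frac1{\gamma^2}\scalar{\varphi}{\mu^{\gamma h}-1-\gamma h}$. For a truncation level $\ell=\ell(\gamma)$ to be chosen, decompose
\begin{align*}
X_\gamma = -\frac1{\gamma^2}\scalar{\varphi}{\mu^{\gamma h_\ell}-1-\gamma h_\ell} + Y_{\gamma,\ell},\qquad
Y_{\gamma,\ell}:=-\frac1{\gamma^2}\scalar{\varphi}{\mu^{\gamma h}-\mu^{\gamma h_\ell}}+\frac1\gamma\scalar{\varphi}{h-h_\ell}.
\end{align*}
By H\"older, $\E[e^{X_\gamma}]\le \E[e^{2A_{\gamma,\ell}}]^{1/2}\,\E[e^{2Y_{\gamma,\ell}}]^{1/2}$, where $A_{\gamma,\ell}$ denotes the first (smooth) term. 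The task is then to bound the two factors uniformly in $\gamma\in(0,1]$.

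\emph{Smooth part.} Since $h_\ell$ is a continuous field (Proposition~\ref{prop: wn decomp}$(i)$), one can argue pointwise. Write $y=\gamma h_\ell(x)$ and $c=\frac{\gamma^2}{2}k_\ell(x,x)$. Convexity gives $e^{y-c}\ge 1+y-c$, hence
\begin{align*}
\frac1{\gamma^2}\big(e^{y-c}-1-y\big)\ge -\frac12 k_\ell(x,x)\ge -C\log\ell-C
\end{align*}
by \eqref{eq: domination}. This yields $A_{\gamma,\ell}\le C\|\varphi\|_{L^1}(1+\log\ell)$ deterministically. To avoid a $\log\ell$ loss, one can sharpen this using $e^{y-c}-1-y\ge \frac{y^2}{2}e^{-|y|}-c$. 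The resulting quadratic term, integrated against $\varphi$, is then compared with $\frac12\scalar{\varphi}{\wick{h_\ell^2}}$, whose negative exponential moments are bounded uniformly in $\ell$ by the computation of Lemma~\ref{lem: wickfinite}$(i)$ applied to $h_\ell$. Note that the constant $-\frac12\scalar{\varphi}{k_\ell(\cdot,\cdot)}$ is exactly the Wick counterterm.

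\emph{Remainder.} I would use Theorem~\ref{t:WhiteNoiseApproxGMC} together with the martingale structure: conditionally on $h_\ell$, the quantity $\mu^{\gamma h}(\varphi)-\mu^{\gamma h_\ell}(\varphi)$ is centered. Expanding $Y_{\gamma,\ell}$ in Wick chaos via Propositions~\ref{prop: approxwick} and \ref{prop: approxm}, and using that $\E[h(x)h_\ell(y)]=k_\ell(x,y)$ by the independence of increments in Proposition~\ref{prop: wn decomp}$(ii)$, its second moment is
\begin{align*}
\gamma^{-4}\iint\varphi(x)\varphi(y)\Big[e^{\gamma^2k}-e^{\gamma^2k_\ell}-\gamma^2(k-k_\ell)\Big]\,d\vol(x)\,d\vol(y)
\lesssim \iint e^{\gamma^2 k}\,\bar G_\ell\,(1+\bar G_\ell)\,d\vol\,d\vol ,
\end{align*}
where $\bar G_\ell=k-k_\ell\ge0$. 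By Proposition~\ref{prop: wn decomp}$(v)$--$(vi)$ this is $O(\ell^{-a})$ for some $a>0$, uniformly in $\gamma\le1$.

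\emph{Main obstacle.} The hard step is upgrading this $L^2$ smallness to a bound on the exponential moment $\E[e^{2Y_{\gamma,\ell}}]$. The linear Gaussian part $\frac1\gamma\scalar{\varphi}{h-h_\ell}$ has variance $O(\gamma^{-2}\ell^{-n})$, so it is harmless once $\ell\ge\gamma^{-2/n}$. For the chaos part $-\gamma^{-2}(\mu^{\gamma h}-\mu^{\gamma h_\ell})(\varphi)$, I would first try a conditional argument. Given $h_\ell$, one has $-\gamma^{-2}\mu^{\gamma h}(\varphi)\le 0$, and the only dangerous contribution is $+\gamma^{-2}\mu^{\gamma h_\ell}(\varphi)$ weighted against the event that the fine-scale chaos is atypically small. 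This event would be controlled by Kahane's comparison (as in the proof of Theorem~\ref{t:WhiteNoiseApproxGMC}) together with negative moments from \eqref{eq: liouville-moments} applied to the independent fine field $h-h_\ell$, which has covariance $\bar G_\ell$ and is small by \eqref{eq: white-noise}. If that route fails, the fallback is a dyadic choice $\ell=e^{t/C}$ in the tail estimate $\P(X_\gamma>t)$, combined with hypercontractivity of the Wick chaos so that higher moments of $Y_{\gamma,\ell}$ give tails decaying faster than $e^{-t}$.
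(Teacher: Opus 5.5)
Your treatment of the smooth part follows the paper's Step~1 in spirit: you compare $-\gamma^{-2}\scalar{\varphi}{\mu^{\gamma h_\ell}-1-\gamma h_\ell}$ with $-\frac12\scalar{\varphi}{\wick{h_\ell^2}}$. The way you remove the cutoff, however, cannot work. The H\"older step $\E[e^{X_\gamma}]\le\E[e^{2A_{\gamma,\ell}}]^{1/2}\,\E[e^{2Y_{\gamma,\ell}}]^{1/2}$ moves $+\gamma^{-2}\mu^{\gamma h_\ell}(\varphi)$ into $Y_{\gamma,\ell}$, away from the favourable sign structure of $X_\gamma$. As a result $\E[e^{qY_{\gamma,\ell}}]=+\infty$ for every $q>0$, every $\gamma$ and every $\ell$.

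The reason is as follows.
\begin{itemize}
\item The term $-\gamma^{-2}\mu^{\gamma h}(\varphi)$ compensates $+\gamma^{-2}\mu^{\gamma h_\ell}(\varphi)$ only on average.
\item With probability bounded below given the coarse field (e.g.\ after a Cameron--Martin shift of the fine field by a negative function), the fine-scale chaos removes a fixed fraction $\delta$ of the mass. Then $Y_{\gamma,\ell}\ge\delta\gamma^{-2}\mu^{\gamma h_\ell}(\varphi)$, up to a Gaussian term.
\item On $\{\min_M h_\ell\ge s\}$, an event of probability at least $e^{-C_\ell s^2}$, one has $\mu^{\gamma h_\ell}(\varphi)\gtrsim_\ell e^{\gamma s}$, so the exponential moment diverges.
\end{itemize}
So the $L^2$ smallness of $Y_{\gamma,\ell}$ you compute says nothing about its exponential moments. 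Your conditional route cannot close either. Negative GMC moments only give polynomial control of the event that the fine chaos is near $0$. What is needed is that relative deviations of size $\gamma^3$ have probability $\ll e^{-C\gamma^{-2}}$.

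The missing idea is to split on an event instead of using H\"older, so the sign of $-\gamma^{-2}\mu^{\gamma h}(\varphi)$ is never lost. The paper takes $\mathcal B=\{\scalar{h-h_\ell}{\varphi}\ge\gamma^2\}\cup\{\mu^{\gamma h_\ell}(\varphi)-\mu^{\gamma h}(\varphi)\ge\gamma^3\}$.
\begin{itemize}
\item On $\mathcal B^c$ one has $Y_{\gamma,\ell}\le 2\gamma$ pointwise. No exponential moment of the remainder is needed; one just gets $e^{2\gamma}$ times the truncated expectation.
\item On $\mathcal B$ one uses the crude bound $X_\gamma\le\gamma^{-2}\scalar{\varphi}{1}+\gamma^{-1}\scalar{h}{\varphi}$. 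Cauchy--Schwarz then gives $e^{C\gamma^{-2}}\P(\mathcal B)^{1/2}$.
\end{itemize}
Everything then rests on the large-deviation bound $\P(\mathcal B)\le\exp(c\gamma^{-2}\log\gamma)$ of Lemma~\ref{lemma: set B}. That bound comes from a sub-Gaussian estimate on the conditional Laplace transform, $\E[e^{t(\mu^{\gamma h_\ell}(\varphi)-\mu^{\gamma h}(\varphi))}\mid\mathcal F_\ell]\le e^{t^2Z_\ell}$, proved via the Campbell formula, Gaussian FKG using \eqref{eq: pos corr}, and Gronwall. It is followed by Chernoff with $t=\ell^{1/4}$ and the cutoff $\ell(\gamma)=\lceil e^{\gamma^{-1/6}}\rceil$.

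Your hypercontractivity fallback might substitute for that lemma, but only under two conditions you have not met. It must be run on $\P(X_\gamma>t)$ for $t\lesssim\gamma^{-2}$ only, combined with the crude bound for larger $t$. It also needs moment bounds of order $p\sim\gamma^{-2}$ for $\mu^{\gamma h}(\varphi)-\mu^{\gamma h_\ell}(\varphi)$, which you have not supplied.

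Finally, your smooth-part comparison leaves an error of size $\gamma\int\varphi|h_\ell|^3$, and its exponential moment is itself infinite. It has to be combined with the deterministic bound $\frac12k_\ell(x,x)\le C\log\ell$ on a large-field event. The paper does this with the set $\mathcal A$ and the truncation $|h_\ell|\le|\log\ell|^2$, which is what forces $\gamma|\log\ell|^6\lesssim1$ in the choice of $\ell(\gamma)$.
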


\begin{proof}
	Let us assume that w.l.o.g.\ $\|\varphi\|\leq 1$. 
	We adapt the proof of Lemma 3.6 in \cite{lacoin2017semiclassical} to our setting.
	
	\emph{Step 1: }For this we consider the approximation $(h_{\ell})$ given by \eqref{eq: white noise trunc} with kernels $k_{\ell}(x,y)$ and define
	\begin{align*}
		R_{\ell}(x)=e^{\gamma h_{\ell}(x)-\frac{\gamma^2}2k_{\ell}(x,x)}-1-\gamma h_{\ell}(x)-\frac{\gamma^2}2\wick{h_{\ell}(x)^2}.
	\end{align*}
	
	We consider the set 
	\begin{align*}
		\mathcal A=\left\{\int_M |R_{\ell}(x)|\mathbbm{1}_{\{|h_{\ell}(x)|>|\log l|^2\}}\, d\vol(x)\geq \gamma^3\right\}.
	\end{align*}
	
	On the set $\mathcal A$ we use \eqref{eq: domination} and obtain
	\begin{align*}
		-\gamma^{-2}( e^{\gamma h_{\ell}(x)-\frac{\gamma^2}2k_{\ell}(x,x)}-1-\gamma h_{\ell}(x))\leq \frac12k_{\ell}(x,x)\leq C\log \ell
	\end{align*}
	for some positive constant $C>0$.
	In particular
	\begin{align*}
		\E\left[\exp\left(-\frac1{\gamma^2}\{\mu^{\gamma h_{\ell}}(\varphi)-\scalar{\varphi}{1}-\scalar{\gamma h_{\ell}}{\varphi}\}\right)\mathbbm 1_{\mathcal A}\right]\leq \int_M e^{\frac12k_{\ell}(x,x)}\, d\vol(x)\P[\mathcal A].
	\end{align*}
	We estimate $\P[\mathcal A]$ by employing Markov's inequality: For this we check by the usual calculations for Gaussian random variables
	\begin{align*}
		\E\left[|R_{\ell}(x)|\mathbbm 1_{\{|h_{\ell}(x)|>|\log l|^2\}}\right]\leq e^{-c|\log \ell|^3}
	\end{align*}
	for some positive constant $c>0$.
	Then
	\begin{align*}
		\P[\mathcal A]\leq e^{-c|\log \ell|^3}\gamma^{-3}
	\end{align*}
	and consequently
	\begin{align*}
		\int_M e^{\frac12k_{\ell}(x,x)}\, d\vol(x)\P[\mathcal A]\leq e^{-c|\log \ell|^3}\gamma^{-3}\ell^C.
	\end{align*}
	
	Then for 
	\begin{align*}
		\ell(\gamma)=\lceil e^{\gamma^{-1/6}}\rceil
	\end{align*}
	we find that
	\begin{align*}
		\sup_{0<\gamma\leq 1}\E\left[\exp\left(-\frac1{\gamma^2}\{\mu^{\gamma h_{\ell(\gamma)}}(\varphi)-\scalar{\varphi}{1}-\scalar{\gamma h_{\ell(\gamma)}}{\varphi}\}\right)\mathbbm 1_{\mathcal A}\right]<\infty.
	\end{align*}

	On the complement of $\mathcal A$ we use the estimate
	\begin{align*}
		R_{\ell}(x)=&R_{\ell}(x)\mathbbm1_{\{|h_{\ell}(x)|>\log \ell|^2\}}+R_{\ell}(x)\mathbbm1_{\{|h_{\ell}(x)|\leq \log \ell|^2\}}\\
		\geq &R_{\ell}(x)\mathbbm1_{\{|h_{\ell}(x)|>\log \ell|^2\}}-\gamma^3|\log \ell|^6/6,
	\end{align*}
	where we used  $e^u-1-u-u^2/2\geq u^3/6$ in the second line and obtain:
	\begin{align*}
		&\E\left[\exp\left(-\frac1{\gamma^2}\{\mu^{\gamma h_{\ell}}(\varphi)-\scalar{\varphi}{1}-\scalar{\gamma h_{\ell}}{\varphi}\}\right)\mathbbm 1_{\mathcal A^c}\right]\\
		=&\E\left[\exp\left(-\frac1{\gamma^2}\int_M \varphi(x)(R_{\ell}(x)+\frac{\gamma^2}2\wick{h_{\ell}(x)^2})\, d\vol(x)\right)\mathbbm 1_{\mathcal A^c}\right]\\
		\leq &\E\left[\exp\left(-\frac1{\gamma^2}\int_M \varphi(x)(R_{\ell}(x)\mathbbm1_{\{|h_{\ell}(x)|>\log \ell|^2\}}-\gamma^3|\log \ell|^6/6+\frac{\gamma^2}2\wick{h_{\ell}(x)^2})\, d\vol(x)\right)\mathbbm 1_{\mathcal A^c}\right]\\
		\leq  &\E\left[\exp\left(\gamma+\gamma|\log \ell|^6/6-\frac{1}2\int_M\varphi(x)\wick{h_{\ell}(x)^2}\, d\vol(x)\right)\mathbbm 1_{\mathcal A^c}\right].
	\end{align*}
	With the same choice of ${\ell}(\gamma)$ as above we see that the last expression can be bounded uniformly for $\gamma\in (0,1]$ since thanks to Lemma \ref{lem: wickfinite}
	\begin{align*}
		\sup_{\ell}\E\left[\exp(-\frac12\int \varphi(x)\wick{h_{\ell}(x)^2}\, d\vol(x))\right]<\infty.
	\end{align*}

	Hence we bounded
	\begin{align*}
		\E\left[\exp\left(-\frac1{\gamma^2}\{\mu^{\gamma h_{\ell}}(\varphi)-\scalar{\varphi}{1}-\scalar{\gamma h_{\ell}}{1}\}\right)\right]\leq C'
	\end{align*}
	uniformly in $\gamma$ for ${\ell}(\gamma)$.
	
	\emph{Step 2: } We still need to remove the cutoff.
	In order to remove the cut-off, we define the set 
	\begin{align*}
		\mathcal B=\{\scalar{h-h_{\ell}}{\varphi}\geq \gamma^2\}\cup \{\mu_{\ell}-\mu\geq \gamma^3\},
	\end{align*}
	where 
	\begin{align*}
		\mu_{\ell}=\mu^{\gamma h_{\ell}}(\varphi),\quad \mu=\mu^{\gamma h}(\varphi).
	\end{align*}
	On the complement of $\mathcal B$ we estimate
	\begin{align*}
		&\E\Big[\exp\left(-\frac1{\gamma^2}\{\mu^{\gamma h}(\varphi)-\scalar{\varphi}{1}-\scalar{\gamma h}{\varphi}\}\right)1_{\mathcal B^C}\Big]\\
		=&\E\Big[\exp\left(-\frac1{\gamma^2}\{\mu^{\gamma h_{\ell}}(\varphi)-\scalar{\varphi}{1}-\scalar{\gamma h_{\ell}}{1}\}\right)\exp\left( \frac1\gamma\scalar{h-h_{\ell}}{\varphi} \right)\exp\left(\frac1{\gamma^2}(\mu_{\ell}-\mu) \right)1_{\mathcal B^C}\Big]\\
		\leq &e^{2\gamma}\E\Big[\exp\left(-\frac1{\gamma^2}\{\mu^{\gamma h_{\ell}}(\varphi)-\scalar{\varphi}{1}-\scalar{\gamma h_{\ell}}{\varphi}\}\right)\Big],
	\end{align*}
	which is uniformly bounded in $\gamma$ by virtue of Step 1. 
	
	It remains to consider the expectation on the set $\mathcal B$
	\begin{align*}
		\E\Big[\exp\left(-\frac1{\gamma^2}\{\mu^{\gamma h}(\varphi)-\scalar{\varphi}{1}-\scalar{\gamma h}{\varphi}\}\right)1_{\mathcal B}\Big].
	\end{align*} 
	Since $\mu^{\gamma h}(\varphi)$ is positive, we have
	\begin{align*}
		&\E\left[\exp\left(-\frac1{\gamma^2}\{\mu^{\gamma h}(\varphi)-\scalar{\varphi}{1}-\scalar{\gamma h}{\varphi}\}\right)1_{\mathcal B}\right]\leq e^{\gamma^{-2}}
		\E\left[\exp\left(-\frac1{\gamma}\scalar{h}{\varphi}\right)1_{\mathcal B}\right]\\
		&\leq e^{C\gamma^{-2}}\sqrt{\P[\mathcal B]},
	\end{align*} 
	where we used Cauchy Schwarz in the last inequality. This expectation can be bounded by virtue of Lemma \ref{lemma: set B} below in terms of
	\begin{align*}
		\E\left[\exp\left(-\frac1{\gamma^2}\{\mu^{\gamma h}(\varphi)-\scalar{\varphi}{1}-\scalar{\gamma h}{\varphi}\}\right)1_{\mathcal B}\right]\leq \exp(C\gamma^{-2}+c\gamma^{-2}\log \gamma),
	\end{align*}
	where ${\ell}(\gamma)=\lceil e^{\gamma^{-1/6}}\rceil$. Since the last expression is uniformly bounded for small $\gamma$, this completes the proof.
\end{proof}

\begin{lemma}\label{lemma: set B}
	Let $h$ and $(h_{\ell})_{\ell\in\N}$ be given by \eqref{eq: white noise trunc}. Then
	\begin{align*}
		\mathcal B^*_{\gamma,l}=\{\scalar{h-h_{\ell}}{\varphi}\geq \gamma^2\}\cup \{\mu^*_{\ell}-\mu^*\geq \gamma^3\},
	\end{align*}
	where 
	\begin{align*}
		\mu^*_{\ell}=\mu^{\gamma h_{\ell}}(\varphi),\quad \mu^*=\mu^{\gamma h}(\varphi).
	\end{align*}
	Then there exists a constant $c>0$ such that for all $0<\gamma\leq 1$ 
	\begin{align*}
		\P[\mathcal B^*_{\gamma,\ell(\gamma)}]\leq \exp(c\gamma^{-2}\log \gamma),
	\end{align*}
	where ${\ell}(\gamma)=\lceil e^{\gamma^{-1/6}}\rceil$.
	
\end{lemma}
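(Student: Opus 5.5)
We bound the two events of $\mathcal B^*_{\gamma,\ell}$ separately by a union bound and show that each has probability at most $\exp(c\gamma^{-2}\log\gamma)$ when $\ell=\ell(\gamma)=\lceil e^{\gamma^{-1/6}}\rceil$. The key input is that $\ell(\gamma)$ grows faster than any power of $\gamma^{-1}$. Indeed, $\log\ell(\gamma)\ge\gamma^{-1/6}\gg\log(1/\gamma)$, so any polynomial decay in $\ell^{-1}$ beats any power of $\gamma$ by a super-polynomial margin.

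\emph{First event.} The variable $\scalar{h-h_\ell}{\varphi}$ is a centered Gaussian. By \eqref{eq: variance} in Proposition \ref{prop: wn decomp}$(v)$, its variance satisfies $v_\ell\le C\ell^{-1}$ (the proof even gives $C\ell^{-n}$). The Gaussian tail bound then gives
\begin{align*}
\P\big[\scalar{h-h_\ell}{\varphi}\ge\gamma^2\big]\le \exp\Big(-\frac{\gamma^4}{2C}\,\ell\Big)\le \exp\Big(-\frac{\gamma^4}{2C}e^{\gamma^{-1/6}}\Big).
\end{align*}
Since $\gamma^4e^{\gamma^{-1/6}}\ge \gamma^{-3}\ge -c\gamma^{-2}\log\gamma$ for small $\gamma$, this is the required bound. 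For $\gamma$ bounded away from $0$ we adjust the constant $c$.

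\emph{Second event.} We use Markov's inequality with a second moment:
\begin{align*}
\P[\mu_\ell^*-\mu^*\ge\gamma^3]\le\gamma^{-6}\,\E[(\mu^*-\mu_\ell^*)^2].
\end{align*}
The martingale structure from Theorem \ref{t:WhiteNoiseApproxGMC} shows that $\mu^*_\ell=\E[\mu^*\mid\sigma(h_\ell)]$. Hence
\begin{align*}
\E[(\mu^*-\mu^*_\ell)^2]=\E[(\mu^*)^2]-\E[(\mu^*_\ell)^2].
\end{align*}
This requires $\gamma^2<n$, so that second moments exist, which is harmless since $\gamma\le1$ is small. Using the covariance of the chaos we can write this difference as
\begin{align*}
\iint\varphi(x)\varphi(y)\,e^{\gamma^2k_\ell(x,y)}\big(e^{\gamma^2\bar G_\ell(x,y)}-1\big)\,d\vol(x)\,d\vol(y).
\end{align*}
Here $\bar G_\ell=k-k_\ell\ge0$, and this step uses the independence of $h_\ell$ and $h-h_\ell$ from Proposition \ref{prop: wn decomp}. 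Using $k_\ell\le k$, Hölder's inequality, and the estimate \eqref{eq: white-noise} of Proposition \ref{prop: wn decomp}$(vi)$, we bound this by a negative power of $\ell$, say $C\gamma^2\ell^{-\delta}$ for some $\delta>0$. We may need to redo the estimate in $(vi)$ with the extra factor $e^{\gamma^2 k}$, which is integrable for small $\gamma$ by \eqref{eq: log}. This yields
\begin{align*}
\P[\mu_\ell^*-\mu^*\ge\gamma^3]\le C\gamma^{-4}e^{-\delta\gamma^{-1/6}}.
\end{align*}

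\emph{Main obstacle.} The right-hand side $C\gamma^{-4}e^{-\delta\gamma^{-1/6}}$ decays only like $e^{-\delta\gamma^{-1/6}}$, which is far weaker than the claimed $e^{-c\gamma^{-2}\log(1/\gamma)}$. A second moment bound is therefore insufficient, and this is the step I expect to be hardest. To close the gap I would use higher moments. For $p$ up to order $2n/\gamma^2$, the moments $\E[|\mu^*-\mu^*_\ell|^p]$ are finite by \eqref{eq: liouville-moments}. I would take $p\asymp\gamma^{-2}$ and control $\E[|\mu^*-\mu^*_\ell|^p]$ by $(C p\,\ell^{-\delta})^{p/2}$, or a similar expression, via a hypercontractivity or Kahane-type convexity comparison between the fields $h$ and $h_\ell$ that exploits $\bar G_\ell\ge0$. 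Markov's inequality at order $p$ then gives
\begin{align*}
\P[\mu_\ell^*-\mu^*\ge\gamma^3]\le \gamma^{-3p}(Cp\,\ell^{-\delta})^{p/2}=\exp\Big(p\big[3\log(1/\gamma)+\tfrac12\log(Cp)-\tfrac{\delta}{2}\log\ell\big]\Big).
\end{align*}
Since $\log\ell\ge\gamma^{-1/6}$ dominates $\log(1/\gamma)$, the bracket is negative, and with $p\asymp\gamma^{-2}$ this gives $\exp(c\gamma^{-2}\log\gamma)$, which is the claimed decay. The delicate point is obtaining the $p$-th moment bound uniformly for $p\le 2n/\gamma^2-1$. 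If the exponent is only $e^{-c\gamma^{-2}}$, that suffices a fortiori.
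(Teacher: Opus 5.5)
Your treatment of the first event matches the paper's: a centered Gaussian with variance $O(\ell^{-1})$ by \eqref{eq: variance}, then a Gaussian tail bound. You also correctly see that a second-moment bound for $\mu^*_\ell-\mu^*$ only gives $C\gamma^{-4}e^{-\delta\gamma^{-1/6}}$, which is far too weak. The step meant to close this gap, however, is not an argument. The estimate $\E[|\mu^*-\mu^*_\ell|^p]\le (Cp\,\ell^{-\delta})^{p/2}$ for $p\asymp\gamma^{-2}$ is only postulated, and neither tool you name gives it directly.
\begin{itemize}
\item Kahane's convexity inequality compares $\E[F(\mu)]$, with $F$ convex, for a single chaos under two ordered kernels. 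It does not control moments of the difference of two coupled chaoses.
\item Hypercontractivity can only be applied chaos by chaos through the Wick expansion of Proposition \ref{prop: approxm}. You would then have to sum $\sum_\alpha (p-1)^{\alpha/2}\gamma^\alpha(\alpha!)^{-1/2}\big(\iint (k^\alpha-k_\ell^\alpha)(x,y)\varphi(x)\varphi(y)\,d\vol(x)\,d\vol(y)\big)^{1/2}$ and bound these double integrals uniformly in $\alpha$. This can plausibly be pushed through when $\gamma^2p$ is a small multiple of $n$, but that computation is exactly what is missing.
\end{itemize}
Asking for the bound uniformly up to $p=2n/\gamma^2-1$ is more than you need, and it should not be expected near the critical moment. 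Your last sentence is also backwards. A bound $e^{-c\gamma^{-2}}$ is much larger than $\exp(c\gamma^{-2}\log\gamma)=\exp(-c\gamma^{-2}\log(1/\gamma))$, so it would not prove the lemma.

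The idea you are missing is that $\{\mu^*_\ell-\mu^*\ge\gamma^3\}$ is a one-sided event. It is a \emph{downward} deviation of the chaos, and lower deviations of GMC are sub-Gaussian, whereas two-sided $p$-th moments have to fight the heavy upper tail. The paper exploits this through the conditional Laplace transform $\phi(t)=\E[e^{t(\mu^*_\ell-\mu^*)}\mid\mathcal F_\ell]$.
\begin{enumerate}
\item It differentiates in $t$, applies the Campbell formula \eqref{e:liouville-campbell} to the chaos of $Y_\ell=h-h_\ell$, and uses the Gaussian FKG inequality, which is available because of the nonnegative correlations \eqref{eq: pos corr}. This gives $\phi'(t)\le tZ_\ell\,\phi(t)$, hence $\phi(t)\le e^{t^2Z_\ell}$, with $Z_\ell=\iint[e^{\gamma^2\bar G_\ell(x,y)}-1]e^{\gamma h_\ell(x)+\gamma h_\ell(y)}\,d\vol(x)\,d\vol(y)$.
\item Using \eqref{eq: white-noise} together with a Markov/Kahane argument, it shows $\P[Z_\ell\ge\ell^{-1/2}]\le e^{-c\gamma^{-2}\log\ell}$.
\item On $\{Z_\ell<\ell^{-1/2}\}$, a conditional Chernoff bound with $t=\ell^{1/4}$ gives $e^{-\gamma^3\ell^{1/4}}$ up to a constant factor.
\end{enumerate}
For $\ell=\ell(\gamma)$ both terms are below $\exp(c\gamma^{-2}\log\gamma)$. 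To salvage your route, you would have to actually prove a moment estimate, or a one-sided exponential-moment estimate, of comparable strength. As written, the second half of your proof is a plan rather than a proof.
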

\begin{proof}
	All we need to do is to estimate $\P[\mathcal B^*_{\gamma,\ell(\gamma)}]$. For this we consider the two events separately and write ${\ell}=\ell(\gamma)$ for brevity. First, since $\scalar{h-h_{\ell}}{\varphi}$ has a centered Gaussian distribution with variance of order ${\ell}^{-1}$
	by virtue of \eqref{eq: variance} we find   
	\begin{align}\label{eq: b1}
		\P[\scalar{h-h_{\ell}}{\varphi}\geq \gamma^2]\leq \exp(-c\gamma^4\ell)
	\end{align}
	by standard Gaussian estimates and since $\gamma^2 \ell$ is large.
	
	Second, we introduce
	\begin{align*}
		\phi(t)=\E^\ell[e^{t(\mu^*_{\ell}-\mu^*)}]=\E[e^{t(\mu^*_{\ell}-\mu^*)}|\mathcal F_{\ell}],
	\end{align*}
	where $\mathcal F_{\ell}$ is the sigma-algebra generated by $h_u$, $u\leq \ell$.
	We write 
	\begin{align*}
		\mu^*_{\ell}-\mu^*=-\int_M\varphi(y)\wick{e^{\gamma h_{\ell}(y)}}(\wick{e^{\gamma Y_{\ell}(y)}}-1)\, d\vol(y), 
	\end{align*}
	where 
	\begin{align*}
		Y_{\ell}=h-h_{\ell}.
	\end{align*}
	Then
	\begin{align*}
		\phi'(t)=&\E^\ell[(\mu^*_{\ell}-\mu^*)e^{t(\mu^*_{\ell}-\mu^*)}]\\
		=&\int \varphi(x)\wick{e^{\gamma h_{\ell}(x)}}\E^\ell[e^{t(\mu^*_{\ell}-\mu^*)}-e^{t(\mu^*_{\ell}-\mu^*)}\wick{e^{\gamma Y_{\ell}}}]\, d\vol(x).
	\end{align*}
	Applying \eqref{e:liouville-campbell} to the GMC associated to $Y_{\ell}$ with correlation function $\bar G_{\ell}(x,y)$,
	we find
	\begin{align}\label{eq: phi}
		\phi'(t)=\int \varphi(x)\wick{e^{\gamma h_{\ell}(x)}}\E^{\ell}[e^{t(\mu^*_{\ell}-\mu^*)}-e^{t(\mu^*_{\ell}-\tilde \mu_{\ell}^x)}]\, d\vol(x),
	\end{align}
	where 
	\begin{align*}
		\tilde \mu_{\ell}^x=\int e^{\gamma^2\bar G_{\ell}(x,y)}\wick{e^{\gamma h(y)}}\, d\vol(y).
	\end{align*}
	
	Note that, since $e^{-t \mu^*}$ and 
	\begin{align*}
		e^{t(\mu^*-\tilde \mu_{\ell}^x)}=\exp\left(-t\int (e^{\gamma^2\bar G_{\ell}(x,y)}-1)\wick{e^{\gamma h(y)}}\, d\vol(y)\right)
	\end{align*}
	are decreasing functions in $h-h_{\ell}$, the Gaussian FKG inequality \cite[Lemma 2.1]{duminil2023existence} gives us
	\begin{align}\label{eq: fkg}
		\E^\ell[e^{-t\tilde\mu_{\ell}^x}]
		\ge 
		\Bigl(1-t\!\int (e^{\gamma^2\bar G_{\ell}(x,y)}-1)\wick{e^{\gamma h_{\ell}(y)}}\,d\vol(y)\Bigr)
		\E^{\ell}[e^{-t\mu^*}].
	\end{align}
	
	More precisely, we apply the Gaussian FKG inequality to $h_m-h_{\ell}$, which is continuous and has nonnegative correlation by \eqref{eq: pos corr}. Then, Lemma 2.1 in \cite{duminil2023existence} provides
	\begin{align*}
		\E^\ell[e^{-t\tilde \mu_{m,l}^x}]\geq& \E^\ell[e^{t(\mu^*_m-\tilde \mu_{m,\ell}^x)}]\,\E^\ell[e^{-t\mu^*_m}]
	\end{align*}
	for all $m>\ell$, where
	\begin{align*}
		\tilde \mu_{m,\ell}^x=\int e^{\gamma^2\bar G_{m,\ell}(x,y)}\wick{e^{\gamma h_m(y)}}\, d\vol(y).
	\end{align*}
	and $\bar G_{m,\ell}(x,y)\geq0$ is the correlation function of $h_m-h_{\ell}$.
	Further, using the inequality $e^u\geq 1+u$,
	\begin{align*}
		\E^{\ell}[e^{t(\mu^*_m-\tilde \mu_{m,l}^x)}]\E^{\ell}[e^{-t\mu^*_m}]
		\geq &\E^{\ell}[1+t(\mu^*_m-\tilde \mu_{m,l}^x)]\E^{\ell}[e^{-t\mu^*_m}]\\
		=&(1-t\int[e^{\gamma^2\bar G_{m,l}(x,y)}-1]\wick{e^{\gamma h_{\ell}}}\, d\vol(y))\E^{\ell}[e^{-t\mu^*_m}].
	\end{align*}

	Letting $m\to\infty$, since $0\le e^{-t\mu^*_m}\le 1$ and by Theorem \ref{t:WhiteNoiseApproxGMC}
	\[
	\mu^*_m \to \mu^*, \quad \tilde\mu_{m,\ell}^x \to \tilde\mu_{\ell}^x
	\]
	in probability,
	\[
	\E^{\ell}[e^{-t\mu^*_m}] \to \E^{\ell}[e^{-t\mu^*}],
	\qquad 
	\E^{\ell}[e^{-t\tilde\mu_{m,\ell}^x}] \to \E^{\ell}[e^{-t\tilde\mu_{\ell}^x}].
	\]
	Moreover, $\bar G_{m,l}(x,y)\to \bar G_{\ell}(x,y)$, so by Fatou's lemma
	\[
	\int (e^{\gamma^2\bar G_{m,l}(x,y)}-1)\wick{e^{\gamma h_{\ell}(y)}}\,d\vol(y)
	\to 
	\int (e^{\gamma^2\bar G_{\ell}(x,y)}-1)\wick{e^{\gamma h_{\ell}(y)}}\,d\vol(y).
	\]
	Passing to the limit in the inequality therefore produces \eqref{eq: fkg}.

	Together with \eqref{eq: phi} we find after using $\wick{e^{\gamma h_{\ell}}}\leq e^{\gamma h_{\ell}}$
	\begin{align*}
		\phi'(t)\leq t\phi(t)\iint[e^{\gamma^2\bar G_{\ell}(x,y)}-1]e^{\gamma h_{\ell}(y)+\gamma h_{\ell}(x)}\, d\vol (x)\, d\vol (y).
	\end{align*}
	By Gronwall's inequality we deduce that
	\begin{align}\label{eq: phi2}
		\phi(t)\leq e^{t^2Z_{\ell}},
	\end{align}
	where 
	\begin{align*}
		Z_{\ell}=\iint[e^{\gamma^2\bar G_{\ell}(x,y)}-1]e^{\gamma h_{\ell}(y)+\gamma h_{\ell}(x)}\, d\vol (x)\, d\vol (y).
	\end{align*}
	Now we need to control $Z_{\ell}$. 
	By \eqref{eq: white-noise} there exists a constant $C>0$ such that for all $x\in M$ and $\gamma>0$
	\begin{align*}
		\int [e^{\gamma^2\bar G_{\ell}(x,y)}-1]\, d\vol(y)\leq C\gamma ^2 \ell^{-1}.
	\end{align*}

	Hence, after using the symmetry and
	\begin{align*}
		e^{\gamma h_{\ell}(y)+\gamma h_{\ell}(x)}\leq \frac12(e^{2\gamma h_{\ell}(y)}+e^{2\gamma h_{\ell}(x)})
	\end{align*}
	we obtain
	\begin{align*}
		Z_{\ell}\leq C\gamma ^2 \ell^{-1}\int e^{2\gamma h_{\ell}(x)}\, d\vol(x)
	\end{align*}
	After using standard Gaussian tail bounds we estimate
	\begin{align*}
		\E[\int e^{2\gamma h_{\ell}(x)}\, d\vol(x)]\leq &\sqrt{\ell}+\E[\int e^{2\gamma h_{\ell}(x)}1_{\{\gamma h_{\ell}(x)\leq \log(\ell/4)\}}\,d\vol(x)]\\
		\leq &\sqrt{\ell}+e^{-c\gamma^{-2}\log \ell}
	\end{align*}
	and hence 
	\begin{align*}
		\E[Z_{\ell}]\leq C\gamma^2\ell^{-1}\E[\int e^{2\gamma h_{\ell}(x)}\, d\vol(x)].
	\end{align*}
	Then, applying Markov's inequality to a sufficiently big power $Z_{\ell}^p$ and eventually using Kahane's convexity principle we obtain 
	\begin{align}\label{eq: condition2}
		\P[Z_{\ell}\geq \ell^{-1/2}]\leq e^{-c\gamma^{-2}\log \ell}.
	\end{align}
	
	Applying Chernoff's bound to the conditional expectation and estimate \eqref{eq: phi2}
	we find
	\begin{align*}
		\P[\mu^*_{\ell}-\mu^*\geq \gamma^3|\mathcal F_{\ell}]\leq e^{-t\gamma^3}e^{t^2Z_{\ell}}
	\end{align*}
	and consequently on the event $Z_{\ell}\leq  \ell^{-1/2}$ we get 
	\begin{align*}
		\P[\mu^*_{\ell}-\mu^*\geq \gamma^3|Z_{\ell}\leq  \ell^{-1/2}]\leq e^{-t\gamma^3}e^{t^2 \ell^{-1/2}}.
	\end{align*}
	Inserting $t=\ell^{1/4}$ we obtain
	\begin{align}\label{eq: condition}
		\P[\mu^*_{\ell}-\mu^*\geq \gamma^3|Z_{\ell}\leq  \ell^{-1/2}]\leq e^{-\ell^{1/4}\gamma^3}.
	\end{align}
	Finally, applying \eqref{eq: condition2} and \eqref{eq: condition}
	\begin{align}\label{eq: b2}
		\P[\mu^*_{\ell}-\mu^*\geq \gamma^3]\leq e^{-c\gamma^{-2}\log \ell}+e^{-\ell^{1/4}\gamma^3}.
	\end{align}
	This finishes the proof, since inserting ${\ell}=\ell(\gamma)$ we find that \eqref{eq: b1} and \eqref{eq: b2} are both dominated by $\exp(c\gamma^{-2}\log\gamma)$ for some constant $c>0$.
\end{proof}

\subsection{Conclusion of the argument}

Now we are ready to prove our main result.
\begin{proof}[Proof of Theorem \ref{thm: main}]
	$(i)$	We start with proving that as $\gamma\to0$, the law $\Q^{\mathrm{sc}}_{\kappa,\gamma}$ of the field $\frac{\gamma}n h$ with respect to $\Q_{\kappa,\gamma}$ converges weakly on $H^{-1}(M)$ towards $\delta_{u_\kappa}$. Note that by Lemma \ref{lemma: Qkappa} and since $u_\kappa=u_0+\kappa$, it is enough to show that $\Q_{0,\gamma}$ converges weakly towards $\delta_{u_0}$. Hence we concentrate in the following on the case $\kappa=0$ and we denote
	$$\Q_\gamma=\Q_{0,\gamma}, \qquad u=u_0, \qquad \Lambda=\Lambda_0.$$
	
	With a similar computation as in Proposition \ref{prop: pf} one checks that for any bounded continuous functional on $ H^{-1}$ 
	\begin{equation}\begin{aligned}\label{eq: girsanov}
			&\E\left[\Phi\left(\frac{\gamma}{n} h\right)\exp\left(-\frac{na_n}\gamma\scalar{h}{Q}-\frac{na_n\Lambda}{\gamma^2}\mu^{\gamma h}(M)\right)\right]\\
			&=e^{\gamma^{-2}F(\Lambda)}\E\left[\Phi\left(\frac{\gamma}n h+u\right)\exp\left(-\frac{na_n\Lambda}{\gamma^2}\scalar{e^{nu}}{\mu^{\gamma h}-1-\gamma h}\right)\right].
		\end{aligned}
	\end{equation}
	Since $\frac{\gamma}n h+u$ converges towards $u$ as $\gamma\to0$ in probability we have with Proposition \ref{prop: convergence} that 
	\begin{align*}
		\E\left[\Phi\left(\frac\gamma n h\right)\exp\left(-\frac{na_n}\gamma\scalar{h}{Q}-\frac{na_n\Lambda}{\gamma^2}\mu^{\gamma h}(M)\right)\right]\\
		\sim e^{\gamma^{-2}F(\Lambda)}\Phi(u)\E\left[\exp(-\frac{na_n\Lambda}{2}\scalar{e^{nu}}{\wick{h^2}})\right].
	\end{align*}
	Consequently through normalization as $\gamma\to0$
	\begin{align*}
		(Z_\gamma)^{-1}\E\left[\Phi\left(\frac{\gamma}n h\right)\exp\left(-\frac{na_n}\gamma\scalar{h}{Q}-\frac{na_n\Lambda}{\gamma^2}\mu^{\gamma h}(M)\right)\right]
		\to\Phi(u),
	\end{align*}
	where we used Proposition \ref{prop: pf}. This shows that $\frac\gamma n h\to u$ in law with respect to $\Q_\gamma$.

	$(ii)$	Next, we show that as $\gamma\to0$, the law of
	the field $h-\frac n\gamma u$ with respect to $\Q_\gamma$ converges weakly on $H^{-1}(M)$ towards the law of an augmented massive field $\tilde h$ with mass $na_n\Lambda$ in the metric $\hat g=e^{2u}g$.

	We define
	\begin{align*}
		h^u=h-\frac{n}\gamma u.
	\end{align*}
	
	Then by \eqref{eq: girsanov} and the transformation rule we have
	\begin{align*}
		&\E\left[\Phi(h^u)\exp\left(-\frac{na_n}\gamma\scalar{h}{Q}-\frac{na_n\Lambda}{\gamma^2}\mu^{\gamma h}(M)\right)\right]\\
		&=e^{\gamma^{-2}F(\Lambda)}\E\left[\Phi(h^u+\frac{n}\gamma u)\exp\left(-\frac{na_n\Lambda}{\gamma^2}\scalar{e^{nu}}{\mu^{\gamma h}-1-\gamma h}\right)\right]\\
		&=e^{\gamma^{-2}F(\Lambda)}\E\left[\Phi(h)\exp\left(-\frac{na_n\Lambda}{\gamma^2}\scalar{e^{nu}}{\mu^{\gamma h}-1-\gamma h}\right)\right]
	\end{align*}
	Hence through normalization by Proposition \ref{prop: convergence} and Proposition \ref{prop: pf} we arrive at
	\begin{align*}
		&\lim_{\gamma\to0}(Z_\gamma)^{-1}\E\left[\Phi(h^u)\exp\left(-\frac{na_n}\gamma\scalar{h}{Q}-\frac{na_n\Lambda}{\gamma^2}\mu^{\gamma h}(M)\right)\right]\\
		&=(\tilde Z_u)^{-1}\E\left[\Phi(h)\exp\left(-\frac{na_n\Lambda}{2}\scalar{e^{nu}}{\wick{h^2}}\right)\right],
	\end{align*}
	where 
	\begin{align*}
		\tilde Z_u=\E\left[\exp\left(-\frac{na_n\Lambda}{2}\scalar{e^{nu}}{\wick{h^2}}\right)\right].
	\end{align*}

	According to Lemma \ref{lem: wickfinite} 
	we know that under the measure 
	\begin{align*}
		\tilde\P_u=\tilde Z_u^{-1}\exp\left(-\frac{na_n\Lambda}{2}\scalar{e^{nu}}{\wick{h^2}}\right)\P
	\end{align*}
	the field $h$ is an augmented 
	massive field with mass $na_n\Lambda$ under the metric $\hat g$.
\end{proof}

\bibliographystyle{plain}
\bibliography{liouville_semiclassical}

\end{document}